\documentclass[11pt,reqno]{amsart}
\usepackage{amsmath,amsthm,amssymb,amsrefs} 
\usepackage{graphicx}
\usepackage{stmaryrd} 
 \usepackage{hyperref} 

\usepackage{verbatim}

\usepackage[usenames]{color}

\newtheorem{thm}{Theorem}[section]
\newcommand{\bt}{\begin{thm}}
\newcommand{\et}{\end{thm}}

\newtheorem{ex}[thm]{Example}

\newtheorem{cor}[thm]{Corollary} 
\newcommand{\bc}{\begin{cor}}
\newcommand{\ec}{\end{cor}}

\newtheorem{lem}[thm]{Lemma}
\newcommand{\bl}{\begin{lem}}
\newcommand{\el}{\end{lem}}

\newtheorem{prop}[thm]{Proposition}
\newcommand{\bp}{\begin{prop}}
\newcommand{\ep}{\end{prop}}

\newtheorem{defn}[thm]{Definition}

\newtheorem{rmrk}[thm]{Remark}

\newcommand{\GHto}{\stackrel { \textrm{GH}}{\longrightarrow} }
\newcommand{\Fto}{\stackrel {\mathcal{F}}{\longrightarrow} }

\newcommand{\be}{\begin{equation}}
\newcommand{\bee}{\begin{equation*}}

\newcommand{\ee}{\end{equation}}
\newcommand{\eee}{\end{equation*}}

\newcommand{\N}{\mathbb{N}}
\newcommand{\R}{\mathbb{R}}
\newcommand{\E}{\mathbb{E}} 

\newcommand{\diam}{\operatorname{Diam}}
\newcommand{\Sp}{\mathbb{S}}    

\newcommand{\vare}{\varepsilon}

\newcommand{\lip}{\operatorname{Lip}}
\newcommand{\mass}{{\mathbf M}}
\def\set{\textrm{set}}

\newcommand{\intcurr}{{\mathbf I}} 

\newcommand{\area}{\operatorname{Area}}
\newcommand{\vol}{\operatorname{Vol}}
\newcommand{\dvol}{\operatorname{dvol}}
\newcommand{\intcur}{{\mathbf I}}

\newcommand{\VFto}{\stackrel {\mathcal{VF}}{\longrightarrow} }

\def\Gr{\mathcal{G}_n(r_0,\gamma,D, \alpha)}
\newcommand{\Depth}{\operatorname{Depth}}


\begin{document}

\title[Volume Above Distance Below with Boundary]{Intrinsic Flat Stability of Manifolds with Boundary where Volume Converges and Distance is Bounded Below}

\author{Brian Allen}
\address{University of Hartford}
\email{brianallenmath@gmail.com}

\author{Raquel Perales}
\address{CONACyT Research Fellow-Universidad Nacional Aut\'onoma de M\'exico}
\email{raquel.peralesaguilar@gmail.com}

\begin{abstract}
Given a compact, connected, and oriented manifold with boundary $M$ and a sequence of continuous Riemannian metrics defined on it, $g_j$,
we prove volume preserving intrinsic flat convergence of the sequence to the smooth Riemannian metric $g_0$ provided $g_j$ always measures vectors strictly larger than or equal to $g_0$, the diameter of $g_j$ is uniformly bounded, the volume of $g_j$ converges to the volume of $g_0$, and $L^{\frac{m-1}{2}}$ convergence of the metrics restricted to the boundary. Many examples are reviewed which justify and explain the intuition behind these hypotheses. These examples also show that uniform, Lipschitz, and Gromov-Hausdorff convergence are not appropriate in this setting. Our results provide a new rigorous method of proving some special cases of the intrinsic flat stability of the positive mass theorem.
\end{abstract}

\maketitle

\section{Introduction}

We provide natural geometric hypotheses on Riemannian metrics, which can be obtained through standard geometric analysis techniques, that imply volume preserving intrinsic flat convergence to a particular Riemannian manifold with boundary. In the paper Volume Above Distance Below (VADB) of the authors and Sormani \cite{Allen-Perales-Sormani}, we prove the volume preserving intrinsic flat stability of the rigidity result which states that if $g_1 \ge g_2$ and $\vol(g_1)=\vol(g_2)$ for two Riemannian manifolds then $g_1=g_2$. Specifically it is shown that if $g_j \ge g_0$, $\diam(g_j) \le D$, and $\vol(g_j) \rightarrow \vol(g_0)$ for a sequence of Riemannian manifolds on a smooth compact oriented manifold without boundary $M$ then $(M,g_j)$ converges to $(M,g_0)$ in the volume preserving intrinsic flat sense. Our goal in this paper is to extend the work of the authors and Sormani \cite{Allen-Perales-Sormani}
to the setting of manifolds with boundary. 

\begin{thm}\label{vol-thm-boundary} 
Suppose we have a fixed  compact, oriented, connected, and smooth Riemannian manifold with non empty boundary, $M_0=(M^m,g_0)$,
a sequence of continuous Riemannian manifolds  $M_j=(M,g_j)$ so that
\be
\left( 1 - \frac{1}{j} \right) g_0(v,v) \le g_j(v,v),\quad \forall p \in M, v \in T_pM,
\ee
 a uniform upper bound on diameter
\be
\diam(M_j) \le D,
\ee
volume convergence
\be
\vol(M_j) \to \vol(M_0),
\ee
and
\be
 \| g_j|_{\partial M}-g_0|_{\partial M }\|_{L^{\frac{m-1}{2}}(\partial M,h)}\rightarrow 0.
\ee
Then $M_j$  converges to $M_0$ in the volume preserving intrinsic flat sense:
\be
M_j \VFto M_0.
\ee
\end{thm}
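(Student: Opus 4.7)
The plan is to extend the approach used in the closed-manifold VADB argument of \cite{Allen-Perales-Sormani} to the setting with boundary. At a high level, the aim is to construct for each $j$ a common complete metric space $Z_j$ together with distance-preserving embeddings $\phi_j:(M,d_{g_j})\to Z_j$ and $\phi_0:(M,d_{g_0})\to Z_j$, then exhibit an integral current decomposition
\bee
\phi_{j*}[[M_j]] - \phi_{0*}[[M_0]] = A_j + \partial B_j
\eee
in $Z_j$ with $\mass(A_j) + \mass(B_j)\to 0$. Together with the hypothesis $\vol(M_j)\to\vol(M_0)$ this would give $M_j\VFto M_0$ in the volume preserving sense. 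The pointwise bound $g_j\ge (1-\tfrac{1}{j})g_0$ forces $d_{g_j}\ge\sqrt{1-\tfrac{1}{j}}\,d_{g_0}$ and $\dvol_{g_j}\ge (1-\tfrac{1}{j})^{m/2}\dvol_{g_0}$, so the volume hypothesis pins the two volume forms close to one another in $L^1$ and in turn pins $g_j$ close to $g_0$ on most of $M$.

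Concretely, I would first carve out a ``good set'' $W_j \subset M$ on which $g_j \le (1+h_j)^2 g_0$ for a carefully chosen $h_j \downarrow 0$; the pointwise lower bound together with volume convergence forces $\vol_{g_0}(M \setminus W_j)\to 0$, exactly as in the closed case. I would then build $Z_j$ as a warped cylinder $M\times[0,h_j]$ whose metric is designed so that $M\times\{0\}$ is isometric to $M_j$, $M\times\{h_j\}$ is isometric to $M_0$, and the height $h_j$ is large enough to absorb the mismatch between $d_{g_j}$ and $d_{g_0}$ (here is where the uniform diameter bound $D$ enters through the needed height). The natural filling current is $B_j = [[M\times[0,h_j]]]$ with $\mass(B_j)\lesssim h_j\,\vol_{g_0}(M)$, and the defect $A_j = \partial B_j - (\phi_{j*}[[M_j]] - \phi_{0*}[[M_0]])$ splits into an interior piece supported essentially over $M\setminus W_j$, controlled by volume convergence as in \cite{Allen-Perales-Sormani}, and a cylindrical boundary piece of mass roughly $h_j\cdot \area(\partial M)$ together with a correction that measures the discrepancy between $\phi_{j*}[[\partial M_j]]$ and $\phi_{0*}[[\partial M_0]]$ inside $Z_j$.

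The genuinely new step, and the principal obstacle, is this boundary correction. To control the flat distance between $\phi_{j*}[[\partial M_j]]$ and $\phi_{0*}[[\partial M_0]]$ in $Z_j$, and in particular to show that the boundary area elements $\sqrt{\det g_j|_{\partial M}}$ and $\sqrt{\det g_0|_{\partial M}}$ converge in $L^1(\partial M,h)$, the $L^{(m-1)/2}$ hypothesis is exactly sharp. The induced volume form on the $(m-1)$-dimensional boundary is a square root of a degree $m-1$ polynomial in the metric entries, so $L^{(m-1)/2}$ control of the metric is the borderline Lebesgue scaling that yields $L^1$ convergence of the area element and hence mass convergence of the boundary currents; a H\"older-type inequality then converts this into smallness of the flat defect on $\partial M$. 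With that ingredient I would run the boundary analogue of the VADB estimate, choose $h_j$ so that the interior defect, the cylinder mass $h_j\vol(M)$, the boundary cylinder mass $h_j\area(\partial M)$, and the boundary flat defect all vanish simultaneously, and conclude $\mass(A_j)+\mass(B_j)\to 0$, hence $M_j\VFto M_0$.
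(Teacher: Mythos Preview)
Your proposal contains a genuine gap at the step where you choose the good set $W_j$. You define $W_j$ by the \emph{pointwise} condition $g_j \le (1+h_j)^2 g_0$, and then assert that on $W_j$ the distances $d_{g_j}$ and $d_{g_0}$ are close enough for the cylinder height $h_j$ to absorb the mismatch. But this does not follow: for $p,q\in W_j$ the value $d_{g_j}(p,q)$ depends on the metric along curves joining $p$ and $q$, and those curves may well traverse the bad set $M\setminus W_j$, where $g_j$ can be arbitrarily large. Pointwise tensor control on a set of nearly full measure does \emph{not} yield distance control for pairs in that set; converting the former into the latter is precisely the heart of the problem. In the closed case of \cite{Allen-Perales-Sormani} the set $W_j$ is defined by a condition on \emph{distances}, obtained only after first proving $d_j\to d_0$ $\dvol_{g_0}\times\dvol_{g_0}$--a.e.\ on $M\times M$ (via a geodesic-foliation argument) and then applying Egoroff together with a covering trick. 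Your outline skips this step entirely, and without it the embeddings $\phi_0,\phi_j$ into $Z_j$ cannot be shown to be distance preserving for any height $h_j\to 0$.

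The paper closes this gap by a mechanism absent from your proposal: a $\delta$-doubling. One attaches two copies of $M$ along a neck $\partial M\times[-\delta,\delta]$ to obtain a \emph{closed} manifold $\hat M^\delta$, extends each $g_j$ across the neck to $g_j^\delta$ with $g_0^\delta\le g_j^\delta$, and checks that $\vol(\hat M_j^\delta)\to\vol(\hat M_0^\delta)$. It is \emph{here}, in controlling the neck volume, that the $L^{(m-1)/2}$ boundary hypothesis is actually used---not to bound a ``boundary flat defect'' in $Z_j$ as you suggest. One then applies Allen--Sormani's closed-manifold theorem to get $d_j^\delta\to d_0^\delta$ a.e., proves a uniform estimate $|d_\alpha - d_\alpha^\delta|\le C\sqrt{\delta}\,\diam(M_\alpha)$ on $M$, and lets $\delta\to 0$ to deduce $d_j\to d_0$ a.e.\ on $M\times M$. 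Only at that point does the Egoroff/good-set/cylinder construction run, and the only boundary contribution in the flat estimate is the side piece $[[\partial M_j\times[0,h_j]]]$ of mass $h_j\,\vol(\partial M_j)$; there is no separate ``discrepancy between $\phi_{j*}[[\partial M_j]]$ and $\phi_{0*}[[\partial M_0]]$'' to control.
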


In fact, if we are willing to impose the stronger assumption of $L^{\frac{m}{2}}$ convergence of $g_j$ to $g_0$ we can get away with a much weaker assumption on the boundary.

\begin{thm}\label{vol-thm-boundary-improved} 
Suppose we have a fixed  compact, oriented, connected, and smooth Riemannian manifold with non empty boundary, $M_0=(M^m,g_0)$,
a sequence of continuous Riemannian manifolds  $M_j=(M,g_j)$ 
so that
\be
\left( 1 - \frac{1}{j} \right) g_0(v,v) \le g_j(v,v),\quad \forall p \in M, v \in T_pM, 
\ee
 a uniform upper bound on diameter
\be
\diam(M_j) \le D,
\ee
$L^{\frac{m}{2}}$ convergence
\be
\int_{M}|g_j-g_0|_{g_0}^{\frac{m}{2}} dV_{g_0} \rightarrow 0,
\ee
and
\be
\vol(\partial M_j) \le A.
\ee
Then $M_j$  converges to $M_0$ in the volume preserving intrinsic flat sense:
\be
M_j \VFto M_0.
\ee
\end{thm}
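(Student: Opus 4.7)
The plan is to follow the overall template of the proof of \thmref{vol-thm-boundary}, replacing the role of the $L^{(m-1)/2}$ boundary hypothesis by the combination of the interior $L^{m/2}$ hypothesis and the bound $\vol(\partial M_j) \le A$. A key advantage of the $L^{m/2}$ hypothesis is that it controls the metric on all of $M$, not just $\partial M$, which should let us absorb the boundary contribution into a thin collar argument.

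First I would verify that $L^{m/2}$ convergence of $g_j \to g_0$, combined with the one-sided bound $g_j \ge (1 - 1/j)g_0$, yields the volume convergence hypothesis $\vol(M_j) \to \vol(M_0)$ of \thmref{vol-thm-boundary}. Writing $dV_{g_j} = f_j\, dV_{g_0}$ with $f_j = \sqrt{\det(g_0^{-1}g_j)}$, an elementary pointwise estimate of the form $|f_j - 1| \le C(m)(|g_j - g_0|_{g_0} + |g_j - g_0|_{g_0}^{m/2})$, followed by H\"older, gives $\int_M |dV_{g_j} - dV_{g_0}| \to 0$. Passing to a subsequence, we also get $g_j \to g_0$ pointwise almost everywhere, which will be used below.

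Next, following the VADB strategy, I would introduce an auxiliary metric $\hat g_j$ on $M$ that dominates both $g_j$ and $g_0$, such as a smoothing of $\max(g_j, g_0)$, so that the identity maps embed $(M, g_0)$ and $(M, g_j)$ $1$-Lipschitzly into the common length space $(M, \hat d_j)$. The resulting integral currents $T_0$ and $T_j$ satisfy $\mass(T_i) = \vol(M_i)$. Dominated convergence, combined with the a.e.\ convergence from the previous step and the $L^{m/2}$ control, forces $\vol_{\hat g_j}(M) \to \vol(M_0)$. The intrinsic flat distance is then estimated by constructing a filling $T_j - T_0 = U + \partial V$ with $V$ the $\hat d_j$-cylinder between the two embeddings; its mass is bounded above by $2\vol_{\hat g_j}(M) - \vol(M_0) - \vol(M_j)$, which tends to $0$, handling the interior.

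The main obstacle is the boundary piece $U$, which must fill $\partial T_j - \partial T_0$ in $(M, \hat d_j)$ despite the absence of any $L^p$ convergence for $g_j|_{\partial M}$. The plan is to use a Fubini/coarea slicing on the $g_0$-distance to $\partial M$ to select, for each $j$, a collar neighborhood of $\partial M$ of thickness $h_j \to 0$ on which $\|g_j - g_0\|_{L^{m/2}}$ restricted to the collar is $o(1)$; this is possible because the full $L^{m/2}$ norm tends to $0$. One then cone-fills $\partial T_j - \partial T_0$ through this thin collar, with cone mass controlled by $h_j(\vol_{g_j}(\partial M) + \vol_{g_0}(\partial M)) \le h_j(A + \vol_{g_0}(\partial M)) \to 0$. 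Combining the interior and boundary estimates gives $d_{\mathcal{F}}(M_j, M_0) \to 0$, and volume preservation follows from the first step, so $M_j \VFto M_0$.
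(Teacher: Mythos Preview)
Your first step (deducing $\vol(M_j)\to\vol(M_0)$ from $L^{m/2}$ convergence together with the one-sided bound) is correct and matches the paper. After that, however, the proposal breaks down.

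The core problem is the sentence ``the identity maps embed $(M,g_0)$ and $(M,g_j)$ $1$-Lipschitzly into the common length space $(M,\hat d_j)$.'' Intrinsic flat distance is computed via \emph{distance-preserving} (isometric) embeddings, not $1$-Lipschitz maps; see Definition~\ref{IF-defn}. If you push $[[M_0]]$ and $[[M_j]]$ into $(M,\hat d_j)$ by the identity, the images are literally the same current (the current $[[M]]$ does not depend on the Riemannian metric), so the flat distance between them is zero---which tells you nothing about $d_{\mathcal F}(M_0,M_j)$ because the embeddings are not isometric. Relatedly, the proposed mass bound ``$2\vol_{\hat g_j}(M)-\vol(M_0)-\vol(M_j)$'' for a cylinder filling has no geometric meaning: the $(m{+}1)$-volume of $M\times[0,h]$ is $h\cdot\vol(M)$, and the height $h$ in the actual estimate (Theorem~\ref{est-SWIF}) is governed by how close $d_j$ is to $d_0$ on a large set $W_j$, not by volume differences.

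What you are missing is exactly the hard ingredient of the paper: one must first prove pointwise a.e.\ convergence of the \emph{distance functions} $d_j\to d_0$, and only then can one build the common metric space $Z$ of Definition~\ref{defn-Z} with genuinely isometric embeddings and apply Theorem~\ref{thm-IFconvC0B}. The paper obtains this distance convergence by a coarea/Fubini slicing of the $L^{m/2}$ integral to find hypersurfaces $\Sigma_{t_i}$ (level sets of $d_{g_0}(\cdot,\partial M)$) on which $g_j|_{\Sigma_{t_i}}\to g_0|_{\Sigma_{t_i}}$ in $L^{(m-1)/2}$; on each inner region $M_{t_i}$ one is then back in the setting of Theorem~\ref{thm-distdj} (which uses the $\delta$-doubling) and gets $d_j^{t_i}\to d_0^{t_i}$ a.e. Since $d_j^{t_i}\ge d_j\ge d_0$ on $M_{t_i}$, a squeeze plus a diagonal over $t_i\to 0$ yields $d_j\to d_0$ a.e.\ on $M$. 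The hypothesis $\vol(\partial M_j)\le A$ is used only at the very end, via Theorem~\ref{thm-IFconvC0B}, to control the boundary term $h_jA$ in the flat estimate. Your ``cone through a thin collar'' idea does not substitute for any of this: the height $h_j$ of the filling is determined by the distance comparison, not by the thickness of a collar.
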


Intrinsic flat convergence was defined by Sormani-Wenger in \cite{SW-JDG} building upon the work of Ambrosio-Kirchheim \cite{AK}. A sequence of compact oriented manifolds $M_j$ converges in the intrinsic flat sense to $M_0$ if all the manifolds can be embedded by distance preserving maps $\varphi_j: M_j \to Z$ into a common complete metric space $Z$ so that the submanifolds $\varphi_j(M_j)$  converge in the flat sense as currents in $Z$  to  $\varphi(M_0)$ \cite{SW-JDG}.  The sequence is said to converge in the volume preserving intrinsic flat sense if additionally  $\vol(M_j) \to \vol(M_0)$.   It should be noted that for a sequence of Riemannian manifolds with bounded diameter, volume, and area of the boundaries a subsequence will converge in the intrinsic flat sense to an integral current space by Wenger's compactness theorem \cite{Wenger-compactness}. The importance of our theorem is that we can guarantee that the limiting space is a Riemannian manifold with a prescribed metric tensor.  

 We remark that Sormani-Wenger showed that if for a sequence both the Gromov-Hausdorff and the intrinsic flat limit exist then the intrinsic flat limit  is contained in the Gromov-Hausdorff limit, but the existence of one limit does not necessarily imply the existence of the other  \cite{SW-JDG}.

When working with a compact Riemannian manifold with boundary as opposed to one without boundary the global metric geometry, even for points away from the boundary, can change quite drastically. This is due to the fact that distance minimizing geodesics for points $p,q$ away from the boundary can still pass through the boundary. Given the importance of distance preserving maps in the definition of intrinsic flat distance this added complication can make arguments in the no boundary case break down when applied to the boundary case. In fact, to prove convergence of sequences of manifolds with boundary with sectional or Ricci curvature bounds 
it has been necessary to include assumptions on the mean curvature or second fundamental form of the boundary. 
Anderson, Katsuda, Kurylev, Lassas, and Taylor \cite{AndersonEtAl} and Knox \cite{Knox}  show convergence  in the H\"{o}lder space and Sobolev space
imposing uniform two sided  bounds on the Ricci or sectional curvature of $M_j$ and $\partial M_j$, a uniform upper diameter bound, a two sided uniform bound on the mean curvature of $\partial M_j$ and several other bounds.   Kodani \cite{Kodani} and  Wong \cite{Wong} show Gromov-Hausdorff (GH) convergence.    In particular, Wong imposes Ricci uniform lower bounds, two sided bounds on the second fundamental form and an upper diameter bound.  

In \cite{Perales2015}, the second named author shows intrinsic flat convergence as a direct application of  Wenger's compactness theorem. She imposes nonnegative Ricci curvature, an upper diameter bound, a bound on the volume of $\partial M_j$ and one sided bound on the mean curvature of $\partial M_j$.   Under this conditions,  a uniform bound on $\vol(M_j)$  is found and thus one gets a $\mathcal H^n$-countably rectifiable integral current  space  as limit space  \cite{Perales2015}.   See the survey by the second named author \cite{Perales2013survey} for an overview of results on convergence of manifolds and metric spaces with boundary.   In  \cite{Perales2014}, the second named author defines a class of manifolds with boundary with Ricci lower bounds which is precompact with respect to both Gromov-Hausdorff and intrinsic flat distance and the  limits coincide. But the requirements on the boundaries have a metric flavor.  This is based in the joint work of the second named author with Sormani \cite{Perales-Sormani}. In this paper we note that in Theorem \ref{convBdry} we give a restatement of Theorem \ref{vol-thm-boundary} where we replace the $L^{\frac{m-1}{2}}(\partial M_j,h)$ assumption with the assumption $\vol(\partial M_j) \le A$ and the condition that for any two interior points in $M_0$, the geodesic realizing their distance remains in the interior. 

The proof of Theorem \ref{vol-thm-boundary} applies some important results used to prove the main theorem in \cite{Allen-Perales-Sormani} with some added complications coming from the metric structure on manifolds with boundary. 
The first ingredient is to obtain pointwise almost everywhere subconvergence of the distance functions $d_j$ to $d_0$. In \cite{Allen-Perales-Sormani} this was done by constructing regions of $M$ which are foliated by distance realizing geodesics of $M_0$. Since distance realizing geodesics on manifolds with boundary do not foliate regions as in the no boundary case this construction does not easily extend to the boundary case. So instead we adapt a construction of Bray \cite{Bray-Penrose} to construct doublings of each manifold along its boundary while attaching necks to get manifolds with no boundary. This construction is interesting in its own right and should have further applications in the future. Here it is important that we retain some uniform control on the metric structures for these $\delta$-doubled metrics as well as the ability to compare the metric tensors over the entire sequence, which is something we show. Thus, after establishing important results for these $\delta$-doubled manifolds we are able to proceed as in \cite{Allen-Perales-Sormani}, that is we apply a result by the first named author and Sormani that implies almost everywhere subconvergence of the distance 
functions for the $\delta$-doubled metrics with no boundary.

The second main ingredient uses the pointwise almost everywhere subconvergence of distance functions to exhibit 
 metric spaces $(Z,d_Z)$  in which $M_j$ and $M_0$ can be isometrically embedded so one can calculate their flat distance and conclude intrinsic flat subconvergence of $M_j$. In this case we see that both the construction of $(Z,d_Z)$ and the intrinsic flat subconvergence result 
performed in \cite{Allen-Perales-Sormani} hold for manifolds with boundary.

The hypotheses of Theorem \ref{vol-thm-boundary} in the case of a sequence which does not necessarily include a boundary have been well developed by the first named author and Sormani in \cite{Allen-Sormani, Allen-Sormani-2} which culminated in the work of the authors and Sormani \cite{Allen-Perales-Sormani}. Many important examples have been given which justify and explain the intuition behind the hypotheses. For example, a few examples were given in \cite{Allen-Sormani, Allen-Sormani-2} which show that if all of the other hypotheses are satisfied except for $g_j \ge g_0$ then the limit in general may not even be a Riemannian manifold. Also, if all hypotheses are satisfied except for volume convergence then the limiting metric may be a Finsler manifold. It is strongly encouraged to read these examples first, which are reviewed in Section \ref{Examples}, to gain intuition before reading the remainder of the paper.

An important rigidity result involving scalar curvature is the positive mass theorem of Schoen and Yau \cite{Schoen-Yau-positive-mass}
and Witten~\cite{Witten-positive-mass}. The intrinsic flat stability of the positive mass theorem was conjectured by Lee and Sormani \cite{LeeSormani1} and has been shown in the rotationally symmetric case \cite{LeeSormani1}, in the graph case \cite{HLS}, and in various other cases.  Since this conjecture necessarily involves Riemannian manifolds with boundary we expect this paper to be useful for furthering our understanding of this conjecture. Here we prove stability of graphs in Euclidean space for the class  studied in  \cite{HLS}. 
For this we use the variant of Theorem \ref{vol-thm-boundary} that allows the interior of $M_0$ to be convex, allows the sequence to be continuous Riemannian manifolds, and requires a bound on the volumes of $\partial M_j$, Theorem \ref{convBdry}. The original proof in \cite{HLS} has some steps at the end that are difficult to follow and may require deep new theorems about integral current spaces to make them completely rigorous.  Here we avoid such complications by studying the sequence of manifolds themselves rather than their limit spaces.

The paper is organized as follows.    In Section~\ref{background} we provide a brief introduction to integral currents, integral current spaces and intrinsic flat distance. We also review some key results from the paper of the authors and Sormani \cite{Allen-Perales-Sormani}.   In Section~\ref{Examples} we review several examples of sequences of manifolds with boundary and discuss their convergence under intrinsic flat distance. These examples help to explain the intuition behind the hypotheses of Theorem \ref{vol-thm-boundary}.   In Section~\ref{VADBrev} we revise VADB to ensure that  under the assumption of almost everywhere subconvergence, $d_j \to d_0$,  the construction of the metric spaces $(Z,d_Z)$ and the intrinsic flat subconvergence result obtained by embedding the manifolds in these spaces hold when having non empty boundary.   In Section~\ref{Doubling} we introduce the $\delta$-doubling construction, establish many important estimates for this construction, and use it to prove that under the conditions of Theorem \ref{vol-thm-boundary}  we can find almost everywhere subconvergence of the distance functions $d_j  \to d_0$.  This is done by doubling $M$ to get  a manifold with no boundary and constructing Riemannian metrics that satisfy 
the conditions of the main theorem of the first named author and Sormani \cite{Allen-Sormani-2}.   In Section~\ref{Proofs} we prove Theorem \ref{vol-thm-boundary} and Theorem \ref{vol-thm-boundary-improved} by applying  Theorem \ref{PrelimMainThm} .   In Section ~\ref{HLSapplication} we explore an application of the work of this paper to the intrinsic flat stability of the positive mass theorem for Riemannian manifolds which are given as graphs of functions in Euclidean space.
\smallskip

{\bf{Acknowledgements}}:  The authors would like to thank Christina Sormani for her constant support and encouragement.


\section{Background}\label{background}

In this section we review the minimum background necessary to understand integral currents and intrinsic flat distance. 
References are given for the interested reader who would like to dig deeper into the technical details. 

We also state a few results from VADB. In particular, the ones that let us take $F_j$ to be the identity function in Theorem \ref{vol-thm-boundary} and the main theorem of this paper \cite{Allen-Perales-Sormani}, Theorem \ref{vol-thm}.  In Section \ref{VADBrev} we will review VADB  in a deeper way.

\subsection{Notation}

Here the term  isometric embedding and distance preserving map between metric spaces is used interchangeably. 
We index  sequences with subscripts $j$. For subsequences  we use indexes $j(k)$ where we see 
$j:  \mathbb N  \to  \mathbb N$ as an increasing function.   
For a Riemannian manifold $(M,g_j)$ we use the Riemannian metric or its index to denote which tensor we use, for example,
$\diam(M_j)=\diam_j(M)$,  $\vol(M_j)=\vol_j(M)$, $B_{g_j}(p,r)=B_{d_j}(p,r)$, where the latter 
denotes an open ball with radius $r$ and center $p$ with respect to the length distance $d_{g_j}=d_j$ induced by $g_j$. 
Moreover, when calculating intrinsic flat distance we are formally considering the 
integral current space $(M_j,d_j, [[M_j]])$ as defined in Subsection \ref{ssec-IFdist}.

\subsection{Review of Sormani-Wenger Intrinsic Flat Distance}\label{ssec-IFdist}

In this section we review the definition of integral currents in metric spaces and some of their properties 
 provided by Ambrosio-Kirchheim  \cite{AK}.   Based on this work Sormani-Wenger
defined integral current spaces and intrinsic flat distance.  We refer the reader to Sormani and Wenger 
\cite{SW-JDG,SW-CVPDE} and Sormani \cite{Sormani-ArzAsc} for related results.

\smallskip 
Given a complete metric space $(Z,d)$,  we denote by $\lip(Z)$ the set of  real valued Lipschitz functions on $Z$ and by
$\lip_b(Z)$ the bounded ones.  An $m$-dimensional current $T$ on $Z$ 
is a multilinear map $T: \lip_b(Z) \times [\lip(Z)]^{m}  \to \R$  that satisfies certain properties, see \cite[Definition 3.1]{AK}.  
From the definition of $T$ we know there exists a finite Borel measure on $Z$,  $||T||$,  called the mass measure of $T$.
Then the mass of $T$ is defined as $\mass(T)=||T||(Z)$.  The  boundary of $T$,  $\partial T: \lip_b(Z) \times [\lip(Z)]^{m-1}  \to \R$  
is the linear functional given by
\bee
\partial T(f, \pi) = T(1, (f, \pi)),
\eee
and for any Lipschitz function $\varphi: Z \to Y$  the push forward of $T$,   ${\varphi}_{\sharp} T : \lip_b(Y) \times [\lip(Y)]^{m}  \to \R$
is the $m$-dimensional current given by 
\bee
{\varphi}_{\sharp} T (f, \pi) 
= T( f\circ \varphi, \pi \circ \varphi ).
\eee
Furthermore,  the following inequality holds
\begin{equation}\label{eq-pushMeasure}
||  \varphi_\sharp T||  \leq \lip(\varphi)^m  \varphi_\sharp ||T||. 
\end{equation} 
 
The main examples of $m$-dimensional currents on $Z$ are currents that can be written as 
\be\label{block}
\varphi_{\sharp} [[\theta]]
(f, \pi)=  \int_{A} \theta(x) f(\varphi(x))  \det( D_x(\pi \circ\varphi) d\mathcal L^m(x),
\ee
where  $\varphi: A \subset \R^m \to  Z$  is a Lipschitz function, $A$  is a Borel set and, $\theta \in L^1(A, \mathbb R)$. 
An $m$-dimensional integral current in $Z$ is an $m$-dimensional current that can be written as a countable sum of terms as in (\ref{block}), 
$T=  \sum_{i=1}^\infty \varphi_{i\sharp} [[\theta_i]]$, with $\theta_i \in L^1(A_i, \mathbb R)$ integer constant functions, 
such that $\partial T$ is a current.  The class that contains all $m$-dimensional integral currents of $Z$ is denoted by $\intcurr_m(Z)$. 
For  $T\in I_m(Z)$,  Ambrosio-Kirchheim proved that the subset 
\begin{align}
\set(T)= \{ z \in Z \, | \, \liminf_{r \downarrow 0} \frac{\|T\|(B_r(z))}{ r^m }> 0 \}
\end{align}   
is $\mathcal H^m$-countably recitifiable. That is, $\set(T)$ can be covered by images of Lipschitz maps from $\R^m$  to $Z$
up to a set of zero  $\mathcal H^m$-measure.

The flat distance between two integral currents $T_1, T_2  \in  \intcurr_{m} (Z)$ is defined as
\begin{align*}
d_{F}^Z( T_1, T_2)=    \inf \{  \mass(U)+ \mass(V)\,  |&  \, \, U \in \intcurr_{m}(Z), \, V   \in  \intcurr_{m+1} (Z),\\
 & \, \, T_2 -T_1=  U + \partial V  \}.
\end{align*}

\smallskip

An $m$-dimensional integral current space $(X, d, T)$ consists of a metric space $(X, d)$ and an $m$-dimensional integral current defined on the completion of $X$, $T\in I_m(\bar{X})$, such that $\set(T)=X$.  There is also the notion of $m$-dimensional integral current space denoted as ${\bf{0} }$, here $T=0$ and $\set(T)=\emptyset$.

\begin{ex}
For an  $m$-dimensional compact oriented $C^0$ Riemannian manifold $(M^m,g)$,
the triple $(M,d_g, [[M]])$ given as follows is an $m$-dimensional integral current space.
Here
$d_g$ is the length metric induced by $g$,
 \be \label{djdefn}
d_g(p,q) = \inf\{L_g(\gamma):\, \gamma(0)=p, \, \gamma(1)=q\}
\ee
where $\gamma$ is any piecewise smooth curve joining $p$ to $q$
and
\be\label{Ljdefn}
L_g(\gamma)=\int_0^1 g(\gamma'(t),\gamma'(t))^{1/2}\, dt.
\ee
Then $[[M]] :  \lip_b(M) \times [\lip(M)]^m \to \R$ is given by 
\begin{align}\label{eq-canonicalT}
[[M]] = &  \sum_{i,k} {\psi_i}_\sharp [[1_{A_{ik}}]] 
\end{align}
where we have chosen a $C^1$ locally finite atlas $\{(U_i, \psi_i)\}_{i \in \mathbb N}$ of $M$ consisting of positively oriented Lipschitz charts, 
$\psi_i :  U_i  \subset \R^n   \to M$  and $A_{ik}$  are precompact Borel sets such that $\psi_i(A_{ik})$ have disjoint images for all $i$ and $k$ and, cover $M$ $\mathcal H^m$-almost everywhere. In this case, $||[[M]]||= \dvol_g$. 
\end{ex}

We say that an integral current space $(X,d,T)$ is precompact if $X$ is precompact with respect to $d$.
Given two $m$-dimensional integral current spaces, $(X_1, d_1, T_1)$ and $(X_2, d_2, T_2)$, a current preserving isometry between them is a 
 metric isometry $\varphi: X_1 \to X_2$  such that $\varphi_\sharp T_1=T_2$.   The definition of intrinsic flat distance is as follows.

\begin{defn}[Sormani-Wenger \cite{SW-JDG}]\label{IF-defn} 
Given two $m$-dimensional precompact integral current spaces 
$(X_1, d_1, T_1)$ and $(X_2, d_2,T_2)$, the intrinsic flat distance  $d_{\mathcal{F}}\left( (X_1, d_1, T_1), (X_2, d_2, T_2)\right)$ 
between them is defined as
\begin{align*}
\inf  \{d_F^Z(\varphi_{1\sharp}T_1, \varphi_{2\sharp}T_2)|  \,
&  \textrm{$(Z,d_Z)$ complete} ,\, \textrm{$\varphi_j: X_j \to Z $  isom embeddings}\}.
\end{align*}
\end{defn}

The function $d_{\mathcal{F}}$ is a distance up to current preserving isometries 
and the mass functional, $\mass$, is lower semicontinuous with respect to this distance. 
Thus, we say that a sequence $(X_j,d_j,T_j)$ of $m$-dimensional integral current spaces 
converges in $\mathcal{VF}$ sense to $(X,d,T)$ if the sequence converges with respect to the intrinsic flat distance
to $(X,d,T)$ and the masses $\mass(T_j)$ converge to $\mass(T)$.  Wenger proved in \cite{Wenger-compactness} 
that the class of $m$-dimensional precompact integral current spaces with 
a uniform diameter bound, a uniform mass of their currents and their boundaries is compact with 
respect to $d_{\mathcal{F}}$.

\subsection{$L^p$ convergence of Riemannian metrics}\label{ssec-Lp}

We recall the following definition. For more details see \cite{Clarke}.

For a compact manifold $M$ with Riemannian metrics $g_j$ and $g$, the $L^p$ norm, $p>1$,
of $g_j$ with respect to $g$ is given by 
\be
\|g_j\|_{L^p_{g}}=  \left( \int_M  |g_j|_g^p  \dvol_g\right)^{1/p},
\ee
where 
\be 
|g_j|_{g}(p) = \left( \displaystyle\sum_{i=1}^m \lambda_i(p)^4\right)^{1/2}
\ee
and $\lambda_1^2,...,\lambda_m^2$
are the eigenvalues of $g_j$ with respect to an orthonormal basis of $g$, i.e. for each $p \in M$  let  $v_i \in T_pM$, $i=1,...,m$ such that $g(v_i,v_i)=1$ and
\be
g_j(v_i,v_i)= \lambda^2_i.
\ee
We say that a sequence of Riemannian metrics $g_j$  converges to $g_\infty$
in $L^p_{g}$ norm if 
\be 
\|g_j - g_\infty\|_{L^p_g}  \to 0 \quad \text{ as } \quad j \to \infty.
\ee 

In \cite{Allen-Sormani, Allen-Sormani-2}, the first named author and Sormani showed that $L^p$ convergence of sequences of Riemannian metrics is weaker than intrinsic flat convergence (See example \ref{Cinched-Torus} below). The intuition behind this observation is that when a sequence of Riemannian metrics converge in the $L^p$ sense they are allowed to be quite different on sets of measure zero. Along the sequence it is possible that distance realizing curves could live in these measure zero sets which can cause shortcuts between points. These shortcuts will not effect the $L^p$ limit but will drastically effect the intrinsic flat limit which more intimately takes into account the metric structure of the sequence. In example \ref{Cinched-Torus} for instance, the $L^p$ limit is a cylinder but the intrinsic flat limit is a cinched metric space due to the shortcut which is forming along the center of the cylinder.


\subsection{A Few Words About VADB}

Here we just state two lemmas that let us take $F_j$ to be the identity function in Theorem \ref{vol-thm-boundary} and remind the reader of the volume above distance below theorem of the authors and Sormani \cite{Allen-Perales-Sormani} for manifolds with no boundary, Theorem \ref{vol-thm}.

Theorem \ref{vol-thm} follows from Theorem \ref{PointwiseConvergenceAE} below 
and the construction of suitable metric spaces $(Z,d_{Z})$ in which $M_j$ and $M_0$ can 
be isometrically embedded and hence, one can get an estimate  of their intrinsic flat distance.  
In Section  \ref{RevisitingVADB}  we provide more details of this construction and the intrinsic flat distance estimate 
so we can apply them to sequences of manifolds with boundary.   In Section \ref{Doubling}  
we will double the manifolds with boundary to get manifolds with no boundary and thus will be able to apply Theorem \ref{PointwiseConvergenceAE}. 
\smallskip

The lemma below allows  us to pass from the condition 
$F_j:   (M,  d_{j}) \to (M,  d_{0})$ distance decreasing with $C^1$ inverse to 
an inequality at the level of the Riemannian tensors.  

\begin{lem}[Lemma 2.2 in VADB]\label{DistToMetric}
Let $M_j=(M,g_j)$ and $M_0=(M,g_0)$ be continuous Riemannian manifolds and  $F: M_j \rightarrow M_0$ be a 
biLipschitz map with a $C^1$ inverse then 
\begin{equation}
g_0 (v, v)  \leq g_j (dF^{-1}(v), dF^{-1}(v)) \qquad \forall v\in TM_0
\end{equation}
if and only if 
\be
d_0(F(p),F(q))\leq d_j(p,q)\qquad \forall p,q\in M_j.
\ee 
\end{lem}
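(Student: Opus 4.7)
The plan is to prove both implications using the infinitesimal identification of a continuous Riemannian tensor with its induced length distance. Before starting, note that the hypotheses force $F$ itself to be $C^1$: $F^{-1}$ is $C^1$ and $F$ is biLipschitz, so $dF^{-1}$ is pointwise injective, hence invertible (dimensions agree), and the inverse function theorem then upgrades $F$ to $C^1$. In particular $F\circ\gamma$ is piecewise $C^1$ whenever $\gamma$ is, which is what I need for the forward direction.

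For the forward direction $(\Rightarrow)$, take any piecewise $C^1$ curve $\gamma:[0,1]\to M$ from $p$ to $q$ and set $v(t):=dF(\gamma'(t))$, so that $dF^{-1}(v(t))=\gamma'(t)$. The tensor inequality applied pointwise gives $g_0(v(t),v(t))\le g_j(\gamma'(t),\gamma'(t))$, and integrating the square roots yields $L_{g_0}(F\circ\gamma)\le L_{g_j}(\gamma)$. Taking the infimum over curves $\gamma$ joining $p$ and $q$ then produces $d_0(F(p),F(q))\le d_j(p,q)$.

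For the reverse direction $(\Leftarrow)$, I would use the standard fact that, for any continuous Riemannian metric $h$ on $M$ and any $C^1$ curve $\sigma$,
\[
\lim_{t\to 0^+}\frac{d_h(\sigma(0),\sigma(t))}{t}=\sqrt{h(\sigma'(0),\sigma'(0))}.
\]
The upper bound follows from $d_h\le L_h$, and the lower bound uses continuity of $h$ to compare it, in a small coordinate chart around $\sigma(0)$, with the constant-coefficient tensor $h(\sigma(0))$ up to a factor $(1\pm\varepsilon)$. Given $v\in T_xM_0$, choose a $C^1$ curve $\tilde\gamma$ in $M_0$ with $\tilde\gamma(0)=x$ and $\tilde\gamma'(0)=v$, and set $\gamma:=F^{-1}\circ\tilde\gamma$, a $C^1$ curve with $\gamma(0)=F^{-1}(x)$ and $\gamma'(0)=dF^{-1}(v)$. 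Applying the identity in both manifolds and invoking the distance hypothesis
\[
d_0(\tilde\gamma(0),\tilde\gamma(t))=d_0(F(\gamma(0)),F(\gamma(t)))\le d_j(\gamma(0),\gamma(t)),
\]
then dividing by $t$ and letting $t\to 0^+$, yields $g_0(v,v)^{1/2}\le g_j(dF^{-1}(v),dF^{-1}(v))^{1/2}$, which is the tensor inequality.

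The only genuine subtlety is verifying the infinitesimal distance-to-metric identity for a merely continuous tensor; this is classical but deserves a short verification since the standing regularity in the paper on the $g_j$'s is only $C^0$. Everything else amounts to running length-functional bookkeeping through the biLipschitz change of variables provided by $F$.
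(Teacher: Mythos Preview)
Your argument is correct in both directions and is essentially the standard proof. Note, however, that the present paper does not actually prove this lemma: it is quoted verbatim as Lemma~2.2 from VADB \cite{Allen-Perales-Sormani} and no proof is reproduced here, so there is nothing in this paper to compare your write-up against. Your approach---comparing lengths of curves through the chain rule for $(\Rightarrow)$, and recovering the metric tensor from the metric derivative $\lim_{t\to 0^+} d_h(\sigma(0),\sigma(t))/t$ for $(\Leftarrow)$---is the natural one and is exactly what one expects the VADB proof to be.

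Two small remarks on presentation. First, in the forward direction your phrase ``taking the infimum over curves $\gamma$'' is slightly imprecise: what you actually use is that for every admissible $\gamma$ one has $d_0(F(p),F(q))\le L_{g_0}(F\circ\gamma)\le L_{g_j}(\gamma)$, and then the infimum over $\gamma$ on the right gives $d_j(p,q)$. Second, your justification that $F$ itself is $C^1$ is correct (biLipschitz forces $dF^{-1}$ to be injective, hence invertible, and then the inverse function theorem applies), and it is good that you flagged this, since the lemma only assumes $F^{-1}\in C^1$.
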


Now we see that the inequality between the Riemannian metrics  implies that the identity map 
is biLipschitz. 

\begin{lem}[Lemma 2.3 in VADB]\label{MetricToDist}
Let $M_j=(M,g_j)$ and $M_0=(M,g_0)$ be compact Riemannian manifolds with continuous metric tensors such that 
\begin{equation}
g_0 (v, v)  \leq g_j (v,v) \qquad \forall v\in TM.
\end{equation}
Then, the identity map from $(M_j, d_j)$ to $(M_0, d_0)$  is
biLipschitz. 
\end{lem}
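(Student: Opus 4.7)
The plan is to establish two Lipschitz bounds, one in each direction, for the identity map viewed between the length metrics $d_j$ and $d_0$. Since the hypothesis $g_0(v,v) \leq g_j(v,v)$ is a pointwise inequality of tensors, the natural route is to transfer it to lengths of curves by integration, and then to take the infimum over curves to get inequalities between the induced length metrics. The only additional input needed is a reverse pointwise comparison $g_j \leq C^2 g_0$, which does not come from the hypotheses directly but follows from compactness of $M$ together with continuity of $g_0$ and $g_j$.

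The easy direction is the estimate $d_0(p,q) \leq d_j(p,q)$. For any piecewise smooth curve $\gamma \colon [0,1] \to M$ from $p$ to $q$, the pointwise inequality $g_0(\gamma'(t),\gamma'(t)) \leq g_j(\gamma'(t),\gamma'(t))$ gives, after taking square roots and integrating as in \eqref{Ljdefn}, $L_{g_0}(\gamma) \leq L_{g_j}(\gamma)$. Taking the infimum over all such $\gamma$ joining $p$ to $q$ and using \eqref{djdefn} yields $d_0(p,q) \leq d_j(p,q)$, so the identity from $(M_j,d_j)$ to $(M_0,d_0)$ is $1$-Lipschitz.

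For the reverse estimate I would use compactness. Consider the unit tangent bundle $S_0 M := \{(p,v) \in TM : g_0(v,v) = 1\}$ of $(M,g_0)$. Since $M$ is compact and $g_0$ is a continuous Riemannian metric, $S_0 M$ is compact. The function $(p,v) \mapsto g_j(v,v)$ is continuous on $S_0 M$, hence attains a finite maximum $C^2$. By homogeneity in $v$, this yields the pointwise bound $g_j(v,v) \leq C^2 \, g_0(v,v)$ for all $v \in TM$. Integrating along any curve $\gamma$ gives $L_{g_j}(\gamma) \leq C\, L_{g_0}(\gamma)$, and infimizing over curves gives $d_j(p,q) \leq C\, d_0(p,q)$, as desired.

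There is no significant obstacle here; the only subtlety worth watching is that $g_0$ and $g_j$ are assumed only continuous, not smooth. This is harmless because nothing in the argument above uses derivatives of the metric tensors: the lengths in \eqref{Ljdefn} and the compactness argument on $S_0 M$ require only continuity of $g_0$ and $g_j$ together with pointwise positive definiteness, which is guaranteed by the definition of a Riemannian metric. Combining the two inequalities produces $c \, d_j(p,q) \leq d_0(p,q) \leq d_j(p,q)$ with $c = 1/C > 0$, which is precisely the biLipschitz condition for the identity map.
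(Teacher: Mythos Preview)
Your argument is correct and is the standard one: the forward $1$-Lipschitz bound is immediate from $g_0\le g_j$, and the reverse Lipschitz bound follows from compactness of the $g_0$-unit tangent bundle together with continuity of $g_j$. Note that the present paper does not actually reproduce a proof of this lemma---it is only quoted from \cite{Allen-Perales-Sormani} as background---so there is no in-paper proof to compare against, but what you wrote is precisely the argument one expects (and is what appears in the cited reference).
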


Combining  Lemma \ref{DistToMetric} with Lemma \ref{MetricToDist} 
we can replace the condition $F_j$  biLipschitz and distance non-increasing with a $C^1$ inverse 
with the identity function by replacing $g_j$ with its pushforward under $F_j$,   $F_j^*g_j$. 
\smallskip

\begin{thm}[Theorem 1.1  VADB]\label{vol-thm}   
Suppose we have a fixed  compact oriented smooth Riemannian manifold, $M_0=(M^m,g_0)$,
without boundary,  a sequence of continuous Riemannian metrics $g_j$ on $M$ defining $M_j=(M, g_j)$ and
a sequence of biLipschitz and distance non-increasing maps 
\be
F_j: (M_j,d_j) \to (M_0,d_0)
\ee
with a  $C^1$ inverse and a uniform upper bound on diameter
\be
\diam(M_j) \le D
\ee
and volume convergence
\be
\vol(M_j) \to \vol(M_0),
\ee
then $M_j$ converges to $M_0$ in the volume preserving intrinsic flat sense,
\be
M_j \VFto M_0.
\ee
\end{thm}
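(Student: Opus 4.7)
The plan is to reduce to the identity map via the preparatory lemmas, establish pointwise almost everywhere subconvergence of the induced distance functions using the volume comparison, and then realize the flat distance as the mass of an explicit filling in a common metric space containing isometric copies of both $M_j$ and $M_0$.

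First I would apply \lemref{DistToMetric} to the distance non-increasing maps $F_j$ and replace each $g_j$ by its pullback $\tilde g_j=(F_j^{-1})^{*} g_j$ on $M_0$. The resulting sequence satisfies $\tilde g_j\ge g_0$ pointwise while preserving volumes and diameters, so without loss of generality I may assume $M_j=(M,g_j)$ with $M=M_0$, $g_j\ge g_0$ and $F_j=\id$; by \lemref{MetricToDist} the identity is then biLipschitz and $d_j\ge d_0$ pointwise. Expressing $g_j$ in a $g_0$-orthonormal frame with eigenvalues $\lambda_i^2\ge 1$, one has $\sqrt{\det g_j}-1\ge 0$ pointwise, and the hypothesis $\vol(M_j)\to\vol(M_0)$ forces $\int_M\bigl(\sqrt{\det g_j}-1\bigr)\,\dvol_{g_0}\to 0$. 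Since each $\lambda_i\ge 1$, this nonnegative integral smallness propagates to $L^1$-smallness of the pointwise excess $|g_j-g_0|_{g_0}$, so that outside a ``bad set'' of small $g_0$-measure the tensors $g_j$ are close to $g_0$.

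Next I would prove the key step: $d_j(p,q)\to d_0(p,q)$ for $(g_0\otimes g_0)$-almost every pair $(p,q)\in M\times M$. For any such pair joined by a minimizing $g_0$-geodesic $\gamma$ one has the sandwich $d_0(p,q)=L_{g_0}(\gamma)\le d_j(p,q)\le L_{g_j}(\gamma)$, so it suffices to bound the length excess $L_{g_j}(\gamma)-L_{g_0}(\gamma)=\int_0^1\bigl(\sqrt{g_j(\gamma',\gamma')}-\sqrt{g_0(\gamma',\gamma')}\bigr)\,dt$. The strategy is to cover $M\times M$ up to a null set by families of short $g_0$-minimizing geodesics parametrized over transversal slices, apply a Fubini-type argument to convert the integrated pointwise excess of $g_j-g_0$ into integrated length excess along leaves of the foliation, and extract a diagonal subsequence so that the length excess vanishes along almost every leaf. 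This is the \emph{main obstacle}: building geodesic foliations whose Jacobians are uniformly controlled so that pointwise $L^1$-smallness genuinely translates to a.e.\ length smallness, and doing so on enough charts to cover a full-measure subset of pairs $(p,q)$.

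Finally, given the pointwise almost everywhere subconvergence $d_j\to d_0$, I would construct a common complete metric space $(Z_j,d_{Z_j})$ admitting isometric embeddings $\varphi_0\colon M_0\to Z_j$ and $\varphi_j\colon M_j\to Z_j$ with small separation. A concrete model is the bridge space $M\sqcup M$ with metrics $d_0,d_j$ on the two copies and cross distances $d_{Z_j}(x,y)=\inf_{r\in M}\bigl[d_0(x,r)+h_j+d_j(r,y)\bigr]$, where $h_j>0$ is chosen to dominate the supremum of $|d_j-d_0|$ over a compact ``good'' set $K_j\subset M\times M$ which exhausts $M\times M$ in measure. The intrinsic flat distance is then bounded by the mass of an explicit filling between $\varphi_{0\sharp}[[M_0]]$ and $\varphi_{j\sharp}[[M_j]]$, which decomposes into a cylinder piece of mass comparable to $h_j\cdot\vol(M_0)\to 0$ and a remainder over the bad set whose mass is controlled by $\vol(g_j)+\vol(g_0)$ restricted to a set of small $g_0$-measure. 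Combined with the hypothesis $\vol(M_j)\to\vol(M_0)$ this yields volume preserving intrinsic flat convergence $M_j\VFto M_0$.
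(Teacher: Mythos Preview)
Your outline matches the paper's route: reduce to $g_j\ge g_0$ with $F_j=\id$ via Lemmas~\ref{DistToMetric}--\ref{MetricToDist}, obtain a.e.\ subconvergence $d_j\to d_0$ by the geodesic--foliation/Fubini argument (this is exactly Theorem~\ref{PointwiseConvergenceAE}), and then embed both manifolds in a common space to bound the flat distance by a cylinder plus a bad--set remainder.

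Two points to tighten. First, the intermediate claim that $\int(\sqrt{\det g_j}-1)\,\dvol_0\to 0$ forces $|g_j-g_0|_{g_0}\to 0$ in $L^1$ is not quite right (the quadratic term can blow up); what you actually need and have is $\lambda_{\max}-1\le \prod_i\lambda_i-1$ pointwise, which directly controls the length excess along each $g_0$-geodesic leaf. Second, the bridge metric you wrote allows crossing at \emph{any} $r\in M$, and since $|d_j-d_0|$ is only controlled on the good set this does not make $\varphi_j$ distance preserving: a path in the $M_j$ copy could shortcut through $M_0$ via points $r,s$ in the bad set. The paper's fix is to first pass from the Egoroff set $K_j\subset M\times M$ to a subset $W_j\subset M$ of nearly full $g_0$-measure on which $|d_j-d_0|\le 2\delta_j$ uniformly (Lemma~\ref{unif-on-W}), then glue $M_j$ to the top of the cylinder $M\times[0,h_j]$ \emph{only along} $W_j$ and take $h_j\ge\sqrt{2\delta_j D+\delta_j^2}$ (Definition~\ref{defn-Z}, Lemma~\ref{cnstr0-Z}); the remainder term is then exactly $2\vol_j(M\setminus W_j)$, which is the piece you already anticipated.
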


Theorem \ref{vol-thm} follows from Theorem \ref{PointwiseConvergenceAE} below 
and the construction of suitable metric  spaces $(Z,d_{Z})$ in which $M_j$ and $M_0$ can 
be isometrically embedded and hence get an estimate 
of their intrinsic flat distance.

\begin{thm}[Theorem 4.4 in Allen-Sormani]\label{PointwiseConvergenceAE}
Suppose we have a fixed closed and smooth Riemannian manifold, $M_0=(M,g_0)$,
and a sequence of continuous Riemannian metrics $g_j$ on $M$ defining $M_j=(M, g_j)$ such that
\be
g_j(v,v) \ge g_0(v,v) \qquad \forall v\in T_pM
\ee
and
\be
\vol(M_j) \to \vol(M_0)
\ee
then there exists a subsequence such that 
\be
\lim_{k\to \infty} d_{j(k)}(p,q) = d_0(p,q)  \qquad \dvol_{g_0} \times \dvol_{g_0} \textrm{ a.e. } (p,q). 
\ee
\end{thm}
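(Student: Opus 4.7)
The argument hinges on two ingredients: the pointwise lower bound coming from $g_j\ge g_0$, and an $L^1$ control on the maximal eigenvalue coming from volume convergence. Since $g_j\ge g_0$ pointwise on $TM$, every piecewise-smooth curve $\gamma$ satisfies $L_{g_j}(\gamma)\ge L_{g_0}(\gamma)$, so $d_j(p,q)\ge d_0(p,q)$ on all of $M\times M$. Only the reverse inequality $\limsup_k d_{j(k)}(p,q)\le d_0(p,q)$ needs to be established on a set of full $\dvol_{g_0}\times\dvol_{g_0}$ measure, along a subsequence.

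\emph{Eigenvalue control.} Diagonalise $g_j$ pointwise in a $g_0$-orthonormal basis, writing its eigenvalues as $\lambda_{1,j}^2\le\cdots\le\lambda_{m,j}^2$ with $\lambda_{i,j}\ge 1$. The $g_j$-volume density with respect to $g_0$ equals $\prod_i\lambda_{i,j}\ge 1$, so
\[
\int_M\Big(\prod_i\lambda_{i,j}-1\Big)\dvol_{g_0}=\vol(M_j)-\vol(M_0)\longrightarrow 0.
\]
Because every factor is at least one, $\lambda_{m,j}-1\le\prod_i\lambda_{i,j}-1$ pointwise, so $\lambda_{m,j}\to 1$ in $L^1(M,g_0)$. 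Pass to a subsequence $j(k)$ so that $\lambda_{m,j(k)}\to 1$ also $\dvol_{g_0}$-a.e.\ on $M$.

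\emph{Upper bound on foliated pairs.} Given a smooth hypersurface $\Sigma\subset M$ with $g_0$-unit normal $\nu$ and $\delta>0$ small, the Fermi map $\Phi(x,t)=\exp_{x}(t\nu(x))$ diffeomorphs $\Sigma\times(-\delta,\delta)$ onto a tubular neighbourhood, each leaf $t\mapsto\Phi(x,t)$ is $g_0$-length-minimizing, and in these coordinates $g_0=dt^2+h_t$, so $\dvol_{g_0}$ is comparable to $dt\,dV_\Sigma$. Fubini applied to $\lambda_{m,j(k)}\to 1$ in $L^1$ (together with a further diagonal extraction) gives a $dV_\Sigma$-full-measure set of $x\in\Sigma$ for which $\lambda_{m,j(k)}(\Phi(x,\cdot))\to 1$ in $L^1((-\delta,\delta))$. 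For such $x$ and any $-\delta<t_1<t_2<\delta$, using $|\partial_t\Phi|_{g_{j(k)}}\le \lambda_{m,j(k)}$ yields
\[
d_{j(k)}(\Phi(x,t_1),\Phi(x,t_2))\le\int_{t_1}^{t_2}\lambda_{m,j(k)}(\Phi(x,t))\,dt\longrightarrow t_2-t_1=d_0(\Phi(x,t_1),\Phi(x,t_2)),
\]
and combined with the lower bound, $d_{j(k)}\to d_0$ on all such pairs.

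\emph{Covering $M\times M$ and the main obstacle.} For $\dvol_{g_0}\times\dvol_{g_0}$-a.e.\ $(p,q)$, $q$ lies off the $g_0$-cut locus of $p$, so the unique $g_0$-minimizing geodesic joining them sits as an interior segment of some Fermi foliation of the type above. A countable dense family of hypersurfaces, normal fields and parameter intervals produces a countable collection of Fermi charts whose foliated pairs exhaust a conull subset of $M\times M$, and a single diagonal subsequence yields the length-convergence of the previous paragraph on all of them simultaneously, giving $d_{j(k)}(p,q)\to d_0(p,q)$ a.e. The hardest part is precisely this last bookkeeping: exhibiting an explicit countable family of Fermi foliations whose foliated pairs cover $M\times M$ modulo a null set (carefully handling the cut locus and the injectivity radius) and assembling the resulting Fubini subsequences into one diagonal extraction.
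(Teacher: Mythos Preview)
There is a genuine gap in the covering step. Your ``foliated pairs'' from a single Fermi chart $\Phi:\Sigma\times(-\delta,\delta)\to M$ form the set
\[
\{(\Phi(x,t_1),\Phi(x,t_2)):x\in\Sigma,\ t_1,t_2\in(-\delta,\delta)\},
\]
which is the image of a smooth map from an $(m+1)$-dimensional space ($\dim\Sigma=m-1$, plus two time parameters). For $m\ge 2$ this set has zero $\dvol_{g_0}\times\dvol_{g_0}$-measure in the $2m$-dimensional product $M\times M$, and a \emph{countable} union of such sets is still null. Hence no countable family of hypersurface Fermi charts can make the foliated pairs exhaust a conull subset of $M\times M$; the sentence ``whose foliated pairs exhaust a conull subset of $M\times M$'' is false by a straight dimension count. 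Equivalently, the space of unparametrised $g_0$-geodesics is $(2m-2)$-dimensional, while each hypersurface contributes only an $(m-1)$-parameter family of normal geodesics, so countably many hypersurfaces cannot hit almost every minimizing segment.

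The proof recalled in the paper avoids this by establishing a genuinely full-dimensional estimate: for each $(p,q)$ with $q\notin C(p)$ one produces an \emph{open} set $\mathcal U(p,q)\subset M\times M$ containing $(p,q)$ with
\[
\int_{\mathcal U(p,q)}|d_j(p',q')-d_0(p',q')|\,\dvol_{g_0}\times\dvol_{g_0}\to 0.
\]
The tubular neighbourhood is taken around the minimizing geodesic $\gamma_{p,q}$, and one integrates the eigenvalue excess $\lambda_{m,j}-1$ along the $g_0$-geodesics $\gamma_{p',q'}$ for \emph{all} nearby pairs $(p',q')$; a change of variables converts this into an integral of $\lambda_{m,j}-1$ over a region of $M$, which is controlled by volume convergence. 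Because the $\mathcal U(p,q)$ are open (hence $2m$-dimensional), countably many of them cover $M\times M$ minus the cut-locus set, and the diagonal extraction then works. Your eigenvalue control and your length estimate along a single leaf are correct ingredients; what is missing is precisely the passage from ``convergence along a fixed $(m-1)$-parameter family of geodesics'' to ``$L^1$ convergence of $d_j-d_0$ on an open set of pairs''.
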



\section{Examples of Sequences with Boundaries}\label{Examples}

In \cite{Allen-Sormani} and \cite{Allen-Sormani-2}, the first named author and Sormani presented a number of examples 
comparing and contrasting various notions of convergence for Riemannian manifolds.   The hypotheses in these works are very similar  
to the ones we impose in our results and help to illustrate their importance. 
The examples in \cite{Allen-Sormani} were warped products which were allowed to have boundary.  Hence, 
we review the warped product examples with boundary from this paper. 
The examples in \cite{Allen-Sormani-2} were conformal metrics which were not allowed to have boundary.    
We review some conformal examples from \cite{Allen-Sormani-2} but remove a small ball around a point in each manifold  to create manifolds with boundary.

Example \ref{Cinched-Torus}  and Example \ref{Cinched-Sphere}  show that the conditions $g_0 \leq g_j$  and 
$(1-1/2j)g_0 \leq g_j$  are necessary in  
Theorem~\ref{vol-thm-boundary} and Theorem~\ref{vol-thm-boundary-improved}. 
In  Example \ref{to-Finsler} and Example \ref{NoL^mConv} we see  that $\vol(M_j)  \to  \vol(M_0)$ cannot be weaken to $\vol(M_j)  \leq V$. In Example \ref{SingleRidge}
we see that if the $L^{\frac{m-1}{2}}$ convergence of the boundaries is removed but the interior of $M_0$ is convex then the sequence still converges to $M_0$. 
This can be seen as an application of our Theorem \ref{convBdry}.    Finally,   in Example \ref{VolControlDiamNotConvergent} we see that under the hypotheses of Theorem~\ref{vol-thm-boundary} the Gromov-Hausdorff limit does not necessarily agree with the intrinsic flat limit and in fact the Gromov-Hausdorff limit does not necessarily exist in general.  Hence, intrinsic flat convergence is the appropriate notion of convergence for our setting.

More examples of manifolds with boundary that converge in intrinsic flat sense can be found in  \cite{Perales2015} and  \cite{Perales2014}. 
In both works the hypotheses imposed include a lower Ricci curvature  bound.  In   \cite{Perales2015}  the goal was to show 
a uniform bound on $\vol(M_j)$ and then use Wenger's compactness theorem. In  \cite{Perales2014} the goal was to 
define a precompact  class of manifolds, with respect to both Gromov-Hausdorff and intrinsic flat distance, such that the limits coincide. The requirements on the boundaries there had a metric flavor.  

\subsection{Lower bound on the sequence of metrics removed}
In these examples  we have the opposite bound on the metrics, that is, $g_j \le g_0$, but the other hypotheses are satisfies: $\vol_j \to \vol_0$, $\diam_j \leq D$,  $\|g_j|_{\partial M}-g_0|_{\partial M}\|_{L^{\frac{m-1}{2}}(\partial M,h)}\rightarrow 0$. Here we see that $M_j$  converges to something other than $M_0$.  Thus we note the importance of the
$(1-1/2j)g_0 \leq g_j$ bounds of Theorem \ref{vol-thm-boundary}  .

In this first example we have a sequence of warped cylinders which have a shortcut around the center circle which causes the sequence to converge to a cinched metric space which is not a Riemannian manifold.

\begin{ex}[Example 3.4 in \cite{Allen-Sormani}]\label{Cinched-Torus}  
Let $M=[-\pi,\pi]\times \mathbb{S}^1$ and $g_0=  dr^2+ d\theta^2$. 
We define warped product Riemannian manifolds 
\begin{align}
M_j= ([-\pi,\pi]\times \mathbb{S}^1, dr^2+f_j(r)^2 d\theta^2)
\end{align}
with warping functions that are given as follows. 
Let $h:[-1,1]   \to  [h_0, 1]$ be a smooth even function such that $h(-1)=1$ with $h'(-1)=0$, decreasing to $h(0)=h_0\in (0,1]$ and then
increasing back up to $h(1)=1$, $h'(1)=0$. Define  
$f_j(r):[-\pi,\pi]\to [1,2]$ as
 \be
 f_j=
 \begin{cases}
 1 & r\in[-\pi,- 1/j]
 \\  h(jr) & r\in[- 1/j, 1/j]
 \\ 1 &r\in [1/j, \pi].
 \end{cases}
\ee
Then we have 
\begin{align}
g_j \le g_0, \quad \vol(M_j) \to \vol(M_0),  \quad \diam(M_j) \le \diam(M_0). 
\end{align}
But
\be
M_j \GHto M_\infty \textrm{ and } M_j \Fto M_\infty,
\ee
where $M_\infty= (M,  dr^2+f_\infty(r)^2 d\theta^2)$ with $f_\infty(0)=h_0$ and otherwise $f_\infty(r)=1$. 
\end{ex}

This second example is a sequence of conformal metric tensors on a sphere minus a ball centered at the south pole. Then the limit space is shrunk near the equator so that one obtains a cinched space as the intrinsic flat limit.

\begin{ex}[Example 3.1 in  \cite{Allen-Sormani-2}]\label{Cinched-Sphere}
Let $g_0$ be the standard round metric on the sphere ${\mathbb S}^m$.   Let $g_j=f_j^2 g_0$ be
metrics conformal to $g_0$ with smooth conformal factors, $f_j$,
that are radially defined from the north pole with a cinch at the equator as follows:  
 \be
 f_j(r)=
 \begin{cases}
 1 & r\in[0,\pi/2- 1/j]
 \\  h(j(r-\pi/2)) & r\in[\pi/2- 1/j, \pi/2+ 1/j]
 \\ 1 &r\in [\pi/2+ 1/j, \pi]
 \end{cases}
\ee
where $h:[-1,1]\rightarrow \R$ is an even function 
decreasing to $h(0)=h_0\in (0,1)$ and then
increasing back up to $h(1)=1$.   Then $\Sp^m_j =  (\Sp^m, g_j)$ satisfies 
\begin{align}
g_j \le g_0, \quad \vol(\Sp_j^m) \to \vol(\Sp_0^m),  \quad \diam(\Sp_j^m) \le \diam(\Sp_0^m).
\end{align}
But  $\Sp^m_j \Fto \Sp^m_{\infty}$, where  $\Sp^m_{\infty}=(\Sp_{\infty}, g_\infty)$ and 
$g_\infty=f_\infty^2g_0$ with $ f_{\infty}(r)=h_0$ for  $r=\pi/2$ and  $ f_{\infty}(r)=1$ otherwise. 
Thus,  $\Sp^m_{\infty}$  is not isometric to $\Sp_0^m$ since 
the distance, $d_\infty$, between pairs of points in the same hemisphere near the equator is achieved by geodesics which run to the
equator, then around inside the cinched equator, and then out again.  
\end{ex}

\subsection{Volume Convergence Removed}

In the next two examples  we have the lower bound on distance $g_0 \le g_j$,  $\diam_j \leq D$,  $\|g_j|_{\partial M}-g_0|_{\partial M}\|_{L^{\frac{m-1}{2}}(\partial M,h)}\rightarrow 0$,  but no volume convergence.  Hence, in the first example we obtain a limit space that is not even Riemannian but instead is a Finsler manifold with a symmetric norm that is not an inner product which is not even locally isometric to $M_0$ anywhere. In the second example we get a manifold with a bubble attached as the limit space. 

\begin{ex}[Example 3.12 in \cite{Allen-Sormani}]\label{to-Finsler}
Let $M=[-\pi,\pi]\times \mathbb{S}^1$  with warped metrics $g_0=dr^2 + d\theta^2$ and 
 $g_j=dr^2 + f_j(r)^2 d\theta^2$, where smooth $f_j: [-\pi, \pi]  \to [1,5]$ are defined as follows. 
Let $h$ be an even function such that $h(-1)=5$  
decreasing down to $h(0)=1$ and then increasing back up to $h(1)=5$.  
Consider the dense set $S$ in $[-\pi,\pi]$ given by 
\begin{eqnarray}\label{denseS}
 S&=&\left\{s_{i,j}=-\pi + \tfrac{2\pi i}{2^j}\,: \,  i=0,1,2,...,2^j-1,2^j ,\,\, j\in \mathbb{N}\right\}
 \end{eqnarray}
 and let $\delta_j=2^{-2j}$ for $j \in \mathbb N$. Then define functions $f_j$  that are cinched on $S$ as follows
  \be
 f_j(r)=
 \begin{cases}
  h((r-s_{i,j})/\delta_j ) & r\in [s_{i,j}-\delta_j, s_{i,j} +\delta_j] \textrm{ for } i =1,...,2^j-1
 \\ 5 & \textrm{ elsewhere. }
 \end{cases}
\ee
Then we have 
\be
g_0  \le g_j,  \quad  \diam_j \leq D,  \quad  \|g_j|_{\partial M}-g_0|_{\partial M}\|_{L^{\frac{m-1}{2}}(\partial M,h)}\rightarrow 0. 
\ee
But  $\vol(M_j) \to 20 \pi^2 \not = \vol(M_0)$ given that $f_j \to 5$ in $L^p$ sense for $p \geq 1$. 
This sequence converges in intrinsic flat sense to a Finsler space $M_\infty=(M, d_\infty)$
where $d_\infty$ is the R-stretched Euclidean taxi metric given by 
\be
d_{\infty}((s_1,\theta_1),(s_2,\theta_2))= \min \left\{  \sqrt{s^2 + 5^2 \theta^2} ,
 s\left( \tfrac{\sqrt{24}}{5} \right)+ \theta \right\}
\ee
with $s=|s_2-s_1|$ and $\theta=d_{{\mathbb S}^1}(\theta_1, \theta_2)$. 
This happens due to $f_j(r)$ converging pointwise  to 1 on the countable dense set  $S$ and pointwise to $5$ elsewhere, 
creating paths in $M_{\infty}$ that are shorter than the paths in $M_j$.
\end{ex}

\begin{rmrk}
Note that we modified the original definition of $S$ in the example above to include $-\pi$ and $\pi$, 
that is we added $i=0, 2^j$, so that the $L^{\frac{m-1}{2}}$ convergence of the boundary would be true. 
The proof of the properties satisfied by the sequence $M_j$ given in \cite{Allen-Sormani} also hold in this case.
\end{rmrk}

The next example consists of a sequence of conformal tori  that converge to a metric space which is isometric to a flat torus with a flat disk attached so that the boundary of the disk is identified with a point in the flat torus.  By removing a small ball around a point where the conformal factor equals $1$ we see that $\vol(M_j)  \to \vol(M_0)$ is necessary in Theorem \ref{vol-thm-boundary}. 

\begin{ex}[Example 3.5 in \cite{Allen-Sormani-2}]\label{NoL^mConv}
Let  $(\mathbb T^m, g_0)$ be a torus and $h_j:[1,2] \rightarrow [1, \infty)$ be a smooth, decreasing function so that $h_j(1) = j$, $h_j'(1)=h_j'(2)=0$, and $h_j(2) = 1$ so that
\begin{align}
    \frac{1}{j^m}\int_1^2h_j(s)^m s^{m-1}ds \rightarrow 0.\label{ConstructionHyp}
\end{align}
Given a point $p \in \mathbb T^m$,  consider the sequence of functions $f_j:  \mathbb T^m  \to [1,\infty)$ which are radially defined from $p$ by
\begin{equation}
f_j(r)=
\begin{cases}
j &\text{ if } r \in [0,1/j]
\\h_j(jr) &\text{ if } r \in [1/j,2/j]
\\ 1 & \text{ if } r \in (2/j,\sqrt{m}\pi].
\end{cases}
\end{equation}
Then the sequence $\mathbb T^m_j=(\mathbb T^m, f_j^2 g_0)$ satisfies 
\begin{align}
g_0 \leq g_j,  \quad \diam(\mathbb T^m_j) \leq 1+ \sqrt{m}\pi, \\
\vol( \mathbb T^m_j )  \to \vol_{g_0}(B(p,1))+\vol(\mathbb T_0^m). 
\end{align}
Furthermore,  it converges in intrinsic and flat sense to a torus with a bubble attached, 
\begin{align}
    \mathbb T^m_{\infty} = (\mathbb T^m \sqcup_{\{p \sim q\, |\, q\in  \partial \mathbb{D}^m\}} \mathbb{D}^m, d_\infty). 
\end{align}
\end{ex}

\subsection{$L^{\frac{m-1}{2}}$ convergence of the boundary removed}

Here we give a modification of Example 3.7 of \cite{Allen-Sormani} where we do not have $L^{\frac{m-1}{2}}$ convergence on the boundary but where the sequence still converges to the desired Riemannian manifold.   The convergence is obtained following the proof of the modified example but we could  apply Theorem \ref{convBdry} below or Theorem \ref{vol-thm-boundary-improved} to show intrinsic flat convergence. 

\begin{ex}\label{SingleRidge}
Let $M=[-\pi,\pi]\times \mathbb{S}^1$ and $g_0=dr^2+ d\theta^2$.
We define warped product Riemannian manifolds $M_j= ([-\pi,\pi]\times \mathbb{S}^1, dr^2+f_j(r)^2 d\theta^2)$, 
where $f_j:[0,\pi]\to [1,2]$ are the functions given by 
 \be
 f_j(r)=
 \begin{cases}
 h(jr) & r\in[0, 1/j]
 \\ 1 &r\in [1/j, \pi],
 \end{cases}
\ee
with   $h: [0,1]  \to [h_0,1] $ is a smooth function such that 
$h(0)=h_0\in (1,2]$ and then
decreasing down to $h(1)=1$, $h'(1)=0$. 
Here we see that
\begin{align}
g_0 \leq g_j,  \quad \diam(M_j) \leq D.
\end{align}
But 
\be
f_j \not\to 1 \textrm{ in } L^p(\mathbb{S}^1\times \{0\}) \quad p \ge 1
\ee
implies that 
\be
 \|g_j|_{\partial M}-g_0|_{\partial M}\|_{L^{\frac{m-1}{2}}(\partial M,h)}\not \to 0
 \ee
and yet $M_j \to M_0$ in both the GH and $\mathcal{F}$
sense.
\end{ex}

\subsection{$\mathcal{VF}$ Limit Different to GH Limit}

Once again by removing a small ball around a point where the conformal factor equals $1$ we see that this final example satisfies all the conditions of 
Theorem \ref{vol-thm-boundary} but the intrinsic flat limit is not equal to the Gromov-Hausdorff limit. 
Here the sequence of conformal tori converges in Gromov-Hausdorff sense to a torus with a segment attached.  We note that this example can be extended as in Examples 3.8 in \cite{Allen-Sormani-2} by adding more and more splines to give an example which satisfies the hypotheses of Theorem \ref{vol-thm-boundary} which does not have a Gromov-Hausdorff limit.

\begin{ex}[Example 3.7 in \cite{Allen-Sormani-2}]\label{VolControlDiamNotConvergent}
Let  $\mathbb T^m_0=(\mathbb T^m, g_0)$ be a torus and  $h_j:[1,2]\rightarrow  [1, \infty)$ be a smooth, decreasing function so that $h_j(1) = \frac{j}{1+\ln(j)}$  and $h_j(2) = 1$.
Given a point $p \in \mathbb T^m$ and $\eta > 1$,  consider the sequence of functions $f_j:  \mathbb T^m  \to [1,\infty)$ which are radially defined from $p$ by
\begin{equation}
f_j(r)=
\begin{cases}
\frac{j^{\eta}}{1+\ln(j)} &\text{ if } r \in [0,1/j^{\eta}]
\\ \frac{1}{r(1-\ln(r))} &\text{ if } r \in (1/j^{\eta},1/j]
\\ h_j(jr) &\text{ if } r \in (1/j,2/j]
\\ 1 & \text{ if } r \in (2/j,\sqrt{m}\pi].
\end{cases}
\end{equation}
Then the sequence $\mathbb T^m_j=(\mathbb T^m, f_j^2 g_0)$ satisfies 
\begin{align}
g_0 \leq g_j,  \quad \diam(\mathbb T^m_j) \leq  \ln(\eta) + \sqrt{m}\pi, \quad
\vol( \mathbb T^m_j )  \to \vol(\mathbb T_0^m).
\end{align}
Furthermore,  it converges in intrinsic flat sense to $\mathbb T^m_0$ and in Gromov-Hausdorff sense to 
$\mathbb T^m_0$ with a line of length $\ln(\eta)$ attached.
\end{ex}


\section{VADB Revisited}\label{VADBrev}

As we mentioned in the introduction, the proof of Theorem \ref{vol-thm-boundary} relies on two ingredients. 
The first one is to obtain almost everywhere subconvergence of the distance functions $d_j$ to $d_0$ and the second uses the almost everywhere subconvergence to construct complete metric  spaces $(Z,d_Z)$  in which $M_j$ and $M_0$ can be isometrically embedded so one can conclude intrinsic flat subconvergence of $M_j$.   In this section we take care of the second ingredient.  Thus the aim is to generalize Theorem 4.1 in VADB to 
 manifolds with boundary, Theorem  \ref{thm-IFconvC0B}.  The proof consists in checking that  all the steps carried on in VADB hold when having boundary.
 
 Explicitly,  to estimate the intrinsic flat distance between  $M_j$ and $M_0$ we construct a complete metric space $(Z,d_Z)$ where both manifolds isometrically embed via isometric embeddings
 $\varphi_j$ and $\varphi_0$. Then by the definition of intrinsic flat distance $d_\mathcal{F}(M_0, M_j)$  will be bounded above by  $d^Z_{F}( \varphi_{0\sharp} [[ M_0]],   \varphi_{j\sharp} [[M_j]])$.   Recall that  for any Riemannian manifold  $(M^m,g)$ we consider the $m$-dimensional integral current space $(M,d_g, [[M]])$  described in Section \ref{background} and when having a sequence $(M_j^m, g_j)$ we denote the length distance $d_{g_j}$ by $d_j$. 
Under the hypotheses of Theorem  \ref{thm-IFconvC0B},  we can select a subset $W_j  \subset M$ with volume as close as we want to the volume of $M_0$, and thus as close to the volume of $M_j$ for large $j$,  such that the $d_j$ and $d_0$ distances between pairs of points in $W_j$ are almost the same.  

Now $Z$ can be chosen to be the product of $M$ and an interval $I$ of length  $h_j$ that depends on the $\sup_{W_j \times W_j}|d_j-d_0|$ and the diameter of $M_0$
so that $h_j  \to 0$. To isometrically embed $M_j$ into $Z$ we will also glue a copy of $M$ to the top part of $M \times I$ by identifying points of $W_j$. By not identifying points of $M\setminus W_j$ to the top part of $Z$ we do not require control of distances on $M \setminus W_j$ but rather just a control on volume. This gives us flexibility to allow distances to become quite large on the set $M \setminus W_j$ while still being able to estimate the intrinsic flat distance. Then $d_Z$ is chosen so that in the bottom it coincides with  $d_0$ and in the top it coincides with $d_j$. Then the flat distance $d^Z_{F}( \varphi_{0\sharp} [[ M_0]],   \varphi_{j\sharp} [[M_j]])$ will be bounded above by the difference of the volumes of  $M_j$ and $W_j$ plus the height $h_j$  times a bound on the volumes of $M_j$ and  $\partial M_j$. Thus, the flat distances will converge to zero. 

\subsection{New Theorems}

We start by stating the main result we prove in this section. In the next subsections 
we revise the proof for manifolds with no boundary and give appropriate modifications in order to conclude the proof of the theorem. 

\begin{thm}[c.f. Theorem 4.1 in VADB]\label{thm-IFconvC0B}
Let $M$ be a compact oriented manifold.   Let $M_0=(M,g_0)$ be a smooth Riemannian manifold and $M_j = (M,g_j)$ be continuous Riemannian manifolds
such that
\be 
g_0(v,v) \le g_j(v,v), \quad \forall v \in T_pM,
\ee
\be
\diam(M_j) \le D,
\ee
\be\label{volsconv}
\vol(M_j) \to  \vol(M_0),
\ee
\be
\vol(\partial M_j) \leq A,
\ee
and
\be\label{convae2}
d_j(p,q)\to d_0(p,q)  \qquad \dvol_{g_0} \times \dvol_{g_0} \textrm{ a.e. } (p,q). 
\ee
Then
\begin{align}
M_j \Fto M_0. 
\end{align}
\end{thm}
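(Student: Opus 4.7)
The plan is to adapt the construction of Theorem~4.1 in VADB to the boundary setting. For each $j$ I will build a common complete metric space $(Z_j, d_{Z_j})$ with isometric embeddings $\varphi_0 \colon M_0 \hookrightarrow Z_j$ and $\varphi_j \colon M_j \hookrightarrow Z_j$, and then estimate $d_F^{Z_j}(\varphi_{0\sharp}[[M_0]], \varphi_{j\sharp}[[M_j]])$ via an explicit filling current plus a small excess current that accounts for the new boundary contribution.

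Lemma~\ref{MetricToDist} gives $d_0 \leq d_j$ pointwise. Arguing as in VADB from the a.e.\ convergence \eqref{convae2} (Fubini plus Egorov on the product measure $\dvol_{g_0} \times \dvol_{g_0}$, passing to a subsequence if needed), I extract measurable $W_j \subset M$ and constants $\eta_j \to 0$ with
\[
\sup_{W_j \times W_j}(d_j - d_0) \leq \eta_j \qquad \text{and} \qquad \vol_{g_0}(M \setminus W_j) \to 0;
\]
the hypotheses $g_j \geq g_0$ and \eqref{volsconv} then give $\vol_{g_j}(M \setminus W_j) \to 0$ as well. Setting $h_j := \eta_j/2 + 1/j \to 0$, define
\[
Z_j := \bigl(M \sqcup (M \times [0, h_j]) \sqcup M\bigr)\big/\!\sim,
\]
where the first factor is glued in full to the bottom slice $M \times \{0\}$ and the last factor is glued to the top slice $M \times \{h_j\}$ only over $W_j$. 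Equip the cylinder with a product-type length metric whose horizontal slices at heights $0$ and $h_j$ are isometric to $(M, g_0)$ and $(M, g_j)$ and whose vertical fibers have length $h_j$; let $d_{Z_j}$ be the resulting quotient length distance. The embedding $\varphi_j$ is isometric because any competing path from $\varphi_j(p)$ to $\varphi_j(q)$ that traverses the cylinder must exit and re-enter the top via some $p', q' \in W_j$ and so has length at least
\[
d_j(p, p') + d_0(p', q') + 2h_j + d_j(q', q) \;\geq\; d_j(p, p') + d_j(p', q') + d_j(q', q) \;\geq\; d_j(p, q),
\]
where the first inequality uses $d_0(p',q') + 2h_j \geq d_0(p',q') + \eta_j \geq d_j(p',q')$ on $W_j \times W_j$ and the second is the triangle inequality in $M_j$. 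The argument for $\varphi_0$ is analogous and easier, using $d_0 \leq d_j$.

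For the flat estimate, let $V_j \in \intcurr_{m+1}(Z_j)$ be obtained by pushing $[[M]] \times [[0, h_j]]$ into $Z_j$; its mass is at most $h_j \cdot (\vol(M_j) + o(1))$. Stokes' theorem applied to the cylinder gives, up to a sign convention,
\[
\partial V_j = (M \times \{h_j\}) - \varphi_{0\sharp}[[M_0]] \pm (\partial M \times [0, h_j]),
\]
and by construction $(M \times \{h_j\})\rstr W_j = \varphi_{j\sharp}[[M_j]] \rstr W_j$ in $Z_j$. Rearranging yields $\varphi_{j\sharp}[[M_j]] - \varphi_{0\sharp}[[M_0]] = \partial V_j + U_j$ with
\[
\mass(U_j) \;\leq\; 2\,\vol_{g_j}(M \setminus W_j) + h_j\,\vol(\partial M_j) \;\leq\; 2\,\vol_{g_j}(M \setminus W_j) + h_j A.
\]
All right-hand terms tend to $0$ by our construction and the hypotheses, so $d_{\mathcal F}(M_j, M_0) \leq \mass(V_j) + \mass(U_j) \to 0$ along the subsequence. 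Applying the same argument to any further subsequence yields $M_j \Fto M_0$ for the full sequence.

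The principal new obstacle compared to VADB is the lateral current $\partial M \times [0, h_j]$ appearing in $\partial V_j$, contributing the new term $h_j\,\vol(\partial M_j)$ to $\mass(U_j)$; this is exactly what the hypothesis $\vol(\partial M_j) \leq A$ is designed to control. A secondary technical point is producing the \emph{product} good set $W_j \times W_j$ on which $d_j - d_0$ is uniformly small, rather than just a generic small-measure subset of $M \times M$; this is handled by a Fubini/Egorov extraction on the product measure together with a refinement to a product set, and has already been set up in VADB.
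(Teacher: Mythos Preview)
Your overall strategy matches the paper's: extract a good product set $W_j\times W_j$ via the Egorov/Fubini machinery (Lemma~\ref{unif-on-W}), build the metric space $Z_j$ of Definition~\ref{defn-Z}, embed both manifolds isometrically (Lemma~\ref{cnstr0-Z}), and estimate the flat distance using a filling plus an excess current that now picks up the lateral piece $\partial M\times[0,h_j]$ (Theorem~\ref{est-SWIF}). The boundary term $h_jA$ is indeed the only new ingredient over VADB.

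However, your verification that $\varphi_j$ is distance preserving has a gap. You claim that a competing path through the cylinder has length at least $d_j(p,p')+d_0(p',q')+2h_j+d_j(q',q)$, but the middle term $d_0(p',q')+2h_j$ is a taxi ($\ell^1$) lower bound that does not hold for a Riemannian product-type metric on the cylinder. A path from $(p',h_j)$ to $(q',h_j)$ that descends diagonally to some $(a,0)$, runs along the bottom, and ascends diagonally satisfies only the Pythagorean bound
\[
\sqrt{d_0(p',a)^2+h_j^2}+d_0(a,b)+\sqrt{d_0(b,q')^2+h_j^2},
\]
which can be strictly smaller than $d_0(p',q')+2h_j$; and a path that stays near the top slice has length about $d_j(p',q')$, which need not exceed $d_0(p',q')+2h_j$ either. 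As a result your height $h_j=\eta_j/2+1/j$ is too small. The Pythagorean argument actually used in Lemma~\ref{cnstr0-Z} forces $h_j\ge\sqrt{\eta_j D+\eta_j^2/4}$, of order $\sqrt{\eta_j D}$ rather than $\eta_j$. With this corrected choice of $h_j$ (which still tends to zero) and the corresponding Pythagorean estimate in place of your taxi bound, the rest of your argument goes through exactly as written.
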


Combining Allen-Sormani's almost everywhere convergence Theorem \ref{PointwiseConvergenceAE}   and the previous result we get the following.

\begin{thm}\label{convBdry}
Let $M$ be a compact oriented manifold.   Let $M_0=(M,g_0)$ be a smooth Riemannian manifold and $M_j = (M,g_j)$ be continuous Riemannian manifolds
such that
\be 
g_0(v,v) \le g_j(v,v), \quad \forall v \in T_pM,
\ee
\be
\diam(M_j) \le D,
\ee
\be\label{volsconv}
\vol(M_j) \to  \vol(M_0),
\ee
\be
\vol(\partial M_j) \leq A,
\ee
and  the interior of $M_0$ is convex, i.e. for all $p,q$ in the interior of $M$, 
and $\gamma:[0,1]  \to M$ $g_0$ minimizing geodesic joining $p$ to $q$ 
we have that $\gamma(I)$ remains in the interior of $M$.  Then
\begin{align}
M_j \Fto M_0. 
\end{align}
\end{thm}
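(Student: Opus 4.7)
The plan is to derive Theorem \ref{convBdry} from Theorem \ref{thm-IFconvC0B}. Comparing hypotheses, every condition appearing in Theorem \ref{thm-IFconvC0B} is already supplied by Theorem \ref{convBdry} except for the almost everywhere subconvergence $d_{j(k)}(p,q)\to d_0(p,q)$ of distance functions. So the whole work of the proof is to extract such a subsequence using the convexity hypothesis on $\operatorname{int}(M_0)$.

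First I would note that $g_0 \leq g_j$ together with Lemma \ref{MetricToDist} produces the pointwise lower bound $d_0(p,q)\leq d_j(p,q)$ for every $j$ and every $(p,q)$. For the matching upper bound, since $\partial M$ is $\dvol_{g_0}$-null, for almost every $(p,q)\in M\times M$ both endpoints lie in $\operatorname{int}(M)$, and convexity supplies a $g_0$-minimizing geodesic $\gamma_{pq}\colon[0,1]\to M$ from $p$ to $q$ whose image stays inside $\operatorname{int}(M)$. Using $\gamma_{pq}$ as a test path for the $g_j$-distance yields
\begin{equation*}
d_0(p,q)\;\leq\;d_j(p,q)\;\leq\;L_{g_j}(\gamma_{pq}),
\end{equation*}
so it remains to show that $L_{g_{j(k)}}(\gamma_{pq})\to L_{g_0}(\gamma_{pq})=d_0(p,q)$ for almost every $(p,q)$ along a subsequence, after which the squeeze completes the argument.

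For the length convergence I would replicate the foliation argument that underlies Theorem \ref{PointwiseConvergenceAE}. The coupling of $g_0 \leq g_j$ with $\vol(M_j)\to \vol(M_0)$ forces $\det(g_j g_0^{-1}) \to 1$ in $L^1(M,\dvol_{g_0})$; since all eigenvalues $\lambda_i$ of $g_j$ relative to $g_0$ satisfy $\lambda_i \geq 1$, a subsequence has $\lambda_i \to 1$ pointwise a.e. A Fubini argument along a foliation of interior pairs by $g_0$-minimizing geodesics then upgrades this to $L_{g_{j(k)}}(\gamma_{pq})\to L_{g_0}(\gamma_{pq})$ for a.e. $(p,q)$. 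The role of convexity here is precisely to let every leaf of the foliation be chosen inside $\operatorname{int}(M)$, so no leaf ever approaches $\partial M$ and the closed-manifold argument transfers without any quantitative boundary input. Feeding the resulting a.e. distance subconvergence into Theorem \ref{thm-IFconvC0B} produces $M_{j(k)}\Fto M_0$, and a standard subsequence-of-subsequence argument applied to any hypothetical bad subsequence of $M_j$ upgrades this to $M_j\Fto M_0$.

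The main obstacle I foresee is rigorously confirming that the Allen--Sormani foliation/volume argument survives in the bounded-domain setting under only convexity of the interior, with no quantitative control on $g_j|_{\partial M}$. The strategy for overcoming this is to use convexity to confine almost every minimizing $g_0$-geodesic to the interior and to observe that the only ingredients the foliation argument actually consumes are the one-sided comparison $g_0\leq g_j$ and the global volume convergence $\vol(M_j)\to\vol(M_0)$, both of which are hypotheses of Theorem \ref{convBdry}. This is precisely what distinguishes Theorem \ref{convBdry} from Theorem \ref{vol-thm-boundary} and explains why no $L^{\frac{m-1}{2}}$ boundary condition is needed.
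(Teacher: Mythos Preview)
Your proposal is correct and takes essentially the same approach as the paper: reduce to Theorem \ref{thm-IFconvC0B} by establishing a.e.\ subconvergence of distances, and obtain the latter by transplanting the Allen--Sormani foliation argument behind Theorem \ref{PointwiseConvergenceAE} into $\operatorname{int}(M)$, which convexity permits since every relevant $g_0$-minimizing geodesic and its tubular neighborhood stay interior. The paper organizes the passage from local $L^1$ control on tubular neighborhoods to a single global a.e.\ subsequence via a countable open cover of $(M\times M)\setminus\mathcal S$ (where $\mathcal S$ collects cut loci and $\partial M$) followed by a diagonal extraction, a bookkeeping step your sketch leaves implicit but which is exactly the mechanism needed to make your ``Fubini along a foliation of interior pairs'' rigorous.
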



\subsection{Revisiting VADB}\label{RevisitingVADB}

From the inequality in the Riemannian metrics,  almost everywhere pointed convergence
and a bound on the volumes  we can ensure the existence of sets with 
big volume and good distance estimates between points inside them. 

\begin{lem}[Lemma 4.6 and Lemma 4.9 in VADB]\label{unif-on-W}
Suppose we have a fixed closed smooth  Riemannian manifold, $M_0=(M,g_0)$,
and  a sequence of metric tensors $g_j$ on $M$ defining $M_j=(M, g_j)$ such that
\be 
g_0(v,v) \le g_j(v,v)  \quad \forall v \in T_pM,
\ee
\be
\vol(M_j) \leq V,
\ee
\be\label{ptwise-to-bulk-1}
d_j(p,q)\to d_0(p,q)  \qquad \dvol_{g_0} \times \dvol_{g_0} \textrm{ a.e. } (p,q). 
\ee
Then for any $\lambda  \in (0, \diam(M_0))$ and  $\kappa >1$, there exists 
 a set $W_{\lambda,\kappa} \subset M$ such that for all $p_1, p_2 \in W_{\lambda, \kappa}$
\be
|d_j(p_1,p_2)-d_0(p_1,p_2)| < 2 \lambda + 2\delta_{\lambda,\kappa,j},
\ee
where $\delta_{\lambda,\kappa,j} \to 0$ as $j \to \infty$,
and
\be
\vol_j(M \setminus W_{\lambda, \kappa}) \le  \frac{1}{\kappa}\vol_0(M)+|\vol_j(M)-\vol_0(M)|.
\ee
\end{lem}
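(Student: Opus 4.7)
The plan is to combine Egorov's theorem (on the finite measure space $(M,\dvol_{g_0})$) with a finite $\lambda$-net argument on $(M,d_0)$. Two immediate consequences of the hypothesis $g_0 \le g_j$ drive the whole argument: (i) $d_j(p,q) \ge d_0(p,q)$ pointwise, so the conclusion's absolute value reduces to a one-sided upper bound on $d_j - d_0$; and (ii) $\dvol_{g_j} \ge \dvol_{g_0}$ pointwise as volume densities, so $\vol_j(A) \ge \vol_0(A)$ for every Borel $A \subset M$. Neither the uniform bound $\vol(M_j)\le V$ nor any other ingredient is needed to produce the set $W_{\lambda,\kappa}$; that bound is there only to keep the right-hand side of the volume conclusion useful across the sequence.

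First I would fix a finite $\lambda$-net $\{q_1,\ldots,q_N\}$ for $(M,d_0)$, with $N=N(\lambda,g_0)$ available by compactness. By Fubini applied to the $\dvol_{g_0} \times \dvol_{g_0}$-a.e.\ convergence $d_j(p,q) \to d_0(p,q)$, for $\dvol_{g_0}$-a.e.\ basepoint $q$ the function $p \mapsto d_j(p,q)$ converges to $p \mapsto d_0(p,q)$ for $\dvol_{g_0}$-a.e.\ $p$. Since this set of good basepoints has full measure, hence is dense in $(M,d_0)$, a small perturbation allows me to assume each $q_i$ has this property. Applying Egorov to each sequence $\{d_j(\cdot,q_i)\}_{j}$ on the finite-measure space $(M,\dvol_{g_0})$ yields a set $E_i\subset M$ with
\[
\vol_0(E_i) < \frac{1}{\kappa N}\vol_0(M), \qquad \delta_{i,j} := \sup_{p \in M \setminus E_i} |d_j(p,q_i)-d_0(p,q_i)| \longrightarrow 0.
\]
I then set $W_{\lambda,\kappa} := M \setminus \bigcup_{i=1}^N E_i$ and $\delta_{\lambda,\kappa,j} := \max_{1\le i \le N}\delta_{i,j}$.

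For the distance bound, given $p_1,p_2 \in W_{\lambda,\kappa}$ I pick a net point $q_i$ with $d_0(p_1,q_i)<\lambda$. Since both $p_1,p_2 \in M \setminus E_i$, the uniform Egorov estimate applies at each, and combining the triangle inequality in $d_j$ with the $d_0$-triangle inequality $d_0(q_i,p_2) \le d_0(q_i,p_1)+d_0(p_1,p_2) < \lambda + d_0(p_1,p_2)$ yields
\[
d_j(p_1,p_2) \le d_j(p_1,q_i)+d_j(q_i,p_2) \le 2\lambda + d_0(p_1,p_2) + 2\delta_{\lambda,\kappa,j},
\]
which together with (i) gives $|d_j(p_1,p_2)-d_0(p_1,p_2)| < 2\lambda + 2\delta_{\lambda,\kappa,j}$. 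For the volume conclusion, $\vol_0(M \setminus W_{\lambda,\kappa}) \le \sum_i \vol_0(E_i) < \frac{1}{\kappa}\vol_0(M)$, and using (ii),
\[
\vol_j(M \setminus W_{\lambda,\kappa}) = \vol_j(M) - \vol_j(W_{\lambda,\kappa}) \le \bigl(\vol_j(M) - \vol_0(M)\bigr) + \vol_0(M \setminus W_{\lambda,\kappa}) \le |\vol_j(M)-\vol_0(M)| + \tfrac{1}{\kappa}\vol_0(M).
\]
The only subtle step is the Fubini argument used to place the net inside the full-measure set of good basepoints; everything else is Egorov together with compactness and the comparison $\dvol_{g_j}\ge\dvol_{g_0}$.
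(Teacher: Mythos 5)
Your proof is correct, but it takes a genuinely different and more elementary route than the paper's. The paper applies Egorov once on the product $(M\times M,\dvol_{g_0}\times\dvol_{g_0})$ to obtain a set $S_\vare$ of uniform convergence, then performs a Fubini slicing/averaging argument to extract the set $W_{\kappa\vare}=\{p:\vol_0(S_{p,\vare})>(1-\kappa\vare)\vol_0(M)\}$, and finally invokes a Bishop--Gromov type lower bound $\min_x\vol_0(B(x,\lambda))\ge 2\kappa\vare\vol_0(M)$ --- which the paper must re-justify separately in the boundary case --- to guarantee that for each pair $p_1,p_2\in W_{\kappa\vare}$ the intersection $B(p_1,\lambda)\cap S_{p_1,\vare}\cap S_{p_2,\vare}$ is nonempty, producing a pair-dependent intermediate point $q$ for the triangle inequality. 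You instead apply Fubini \emph{first}, to find a full-measure set of good basepoints $q$ for which $d_j(\cdot,q)\to d_0(\cdot,q)$ $\dvol_{g_0}$-a.e., choose a finite $\lambda$-net inside that dense set, apply Egorov $N$ times on $(M,\dvol_{g_0})$ (one per net point), and take the union bound on the exceptional sets. Your intermediate point is a fixed net point rather than one found to exist per pair, and you dispense with the Bishop--Gromov input entirely; this is cleaner and would transfer to the boundary case with no modification (the paper needs a separate remark to handle that step). The one imprecision is the net perturbation: to replace each $q_i$ by a nearby good basepoint while keeping the $\lambda$-net property you should start from a $\lambda'$-net with $\lambda'<\lambda$ (say $\lambda/2$), or equivalently cover $M$ by finitely many $\lambda/2$-balls and pick a good basepoint in each --- but this is trivial to fix and you already flagged it as the subtle step.
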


\begin{proof}
The proof is as follows.  First fix $\lambda  \in (0, \diam(M_0))$ and $\kappa >1$. 

In {\textit{Lemma 4.8 in VADB}} by using the Bishop-Gromov volume comparison theorem it is shown that 
there exists $\vare= \vare(\lambda, \kappa) > 0$ small enough  so that $\kappa\vare \in (0,1/2)$ and  
\begin{equation}\label{volBound}
\min_{x\in M} \vol_0(B(x,\lambda)) \geq 2\kappa\vare \vol_0(M).
\end{equation}

In {\textit{Proposition 4.3 in VADB}} by applying Egoroff's theorem to the finite measure space $$(M_0 \times M_0,  \dvol_{g_0}  \times \dvol_{g_0})$$
and the functions  $d_j  \to d_0$ that satisfy (\ref{ptwise-to-bulk-1}) and $\vare$ as in the previous step, we 
get a measurable set $S_\vare\subset M_0\times M_0$ such that  
\be\label{unifSvare}
\sup\{|d_j(p,q)- d_0(p,q)|\,:\, (p,q)\in S_\vare\}=\delta_{\vare,j} \to 0
\ee
\be\label{volSvare}
\vol_{0\times 0} (S_\vare)> (1-\vare)\vol_{0\times 0}(M\times M),
\ee
and by enlarging the set we can assume that $(p,q)\in S_\vare$ if and only if $(q,p) \in S_\vare$. Hence we have obtained uniform control on the sequence of distance functions on $S_{\varepsilon} \subset M \times M$ but now we would like to use this to find a subset of $M$ where we have uniform control on the sequence of distance functions.

Thus in {\textit{Lemma 4.4 in VADB}} by (\ref{volSvare}) we get that 
for almost every $p  \in \pi_1(S_\vare)$ where $\pi_1:  M \times M \to M$ is projection onto the first factor,
the sets  $$S_{p,\vare}=\{q\in M\,:\, (p,q)\in S_\vare\}  =  \pi_1^{-1}(p)$$
are $\dvol_0$ measurable and satisfy 
\be\label{AverageAreaOfGoodSetInequality}
(1-\vare) \vol_0(M) <  (\vol_0(M))^{-1} \int_{p\in M} \vol_0(S_{p,\vare})\, \dvol_0.
\ee
So we see that on average the slices $S_{p,\varepsilon}$ have more volume than $(1-\varepsilon)\vol_0(M)$ which motivates the definition of our special subset of $M$ where we will be able to uniformly control distances.

In  {\textit{Lemma 4.5 in VADB}}  we define 
\be \label{Wkappavare}
W_{\kappa\vare}=\{p  \in \pi_1(S_\vare) \,:\, \vol_0(S_{p,\vare}) > (1- \kappa\vare) \vol_0(M)\}.
\ee
Using (\ref{Wkappavare}) and (\ref{AverageAreaOfGoodSetInequality}) it follows that 
\be\label{WkappavareVol}
\vol_0(W_{\kappa\vare}) > \frac{\kappa-1}{\kappa} \vol_0(M).
\ee
Hence $W_{\kappa \vare}\subset M$ is the set where we will now show uniform control of the sequence of distance functions.

In {\textit{Lemma 4.6 in VADB}}, 
from (\ref{WkappavareVol}), using that  $d_0 \leq d_j$  and so $\dvol_0  \leq  \dvol_j$ 
and $\vol(M_j) < \infty$ we get
\be\label{volskappa}
\vol_j(M \setminus W_{\kappa\vare}) \le \frac{1}{\kappa}\vol_0(M)+ |\vol_j(M)-\vol_0(M)|.
\ee
This shows that we have control of $\vol_j(M \setminus W_{\kappa\vare})$ as $\kappa \rightarrow \infty$ and $j \rightarrow \infty$.

Now in  {\textit{Lemma 4.7 in VADB}}  for $p_1, p_2  \in W_{\kappa\vare}$ distinct points, 
by  (\ref{Wkappavare}) and $\kappa \vare < 1/2$, we get 
$S_{p_1,\vare}  \cap  S_{p_2,\vare} \neq \emptyset$
and
\be\label{IntersecSpvare}
\vol_0(S_{p_1,\vare} \cap S_{p_2,\vare}) > (1-2\kappa\vare) \vol_0(M).
\ee

In  {\textit{Lemma 4.8 in VADB}} we show that 
for all $x \in M$ and $p_1,p_2\in W_{\kappa\vare}$, 
 from  (\ref{volBound}) and (\ref{IntersecSpvare}) it follows that 
 \be\label{IntersecNonEm}
B_{g_0}(x,\lambda) \cap S_{p_1,\vare} \cap S_{p_2,\vare} \neq \emptyset.
\ee

Finally,  in {\textit{Lemma 4.9 in VADB}} we argue that  for all $p_1, p_2 \in W_{\kappa\vare}$.
By (\ref{IntersecNonEm}) there exists 
$q \in B(p_1,\lambda) \cap S_{p_1,\vare} \cap S_{p_2,\vare}$. Using $d_0 \leq d_j$,  the triangle inequality, 
that $(p_i,q)$ satisfy  (\ref{unifSvare})  and that $d(p_1,q) < \lambda$,  we prove that for all $p_1, p_2 \in W_{\kappa\vare}$
\be\label{distWkappavare}
|d_j(p_1,p_2)-d_0(p_1,p_2)| < 2 \lambda + 2\delta_{\vare,j}.
\ee
Thus demonstrating the uniform control of distances for the set $W_{\kappa \varepsilon}$. Then (\ref{distWkappavare}) and (\ref{volskappa}) give the conclusion of the lemma. 
\end{proof}

\begin{rmrk}
Note that for the boundary case we only have to justify the first step of the previous proof. 
This is done as follows, since $(M_0, d_0)$ is compact there is a finite  cover of $M_0$
by balls of radius $\lambda/2$, $\{B_{d_0}(x_j, \lambda/2) \}$. Thus, for any $x \in M_0$ there exists a $j$ such 
that $d_0(x_j, x) \leq \lambda/2$. Hence, $B_{d_0} (x, \lambda) \supset B_{d_0} (x_j, \lambda/2)$.
Thus $\vol_0(B_{d_0} (x, \lambda))  \geq    \min_{j} \{ \vol_0( B_{d_0} (x_j, \lambda/2) \}$. 
Choose 
\begin{equation}
\vare=    \frac{  \min_{j} \{ \vol_0( B_{d_0} (x_j, \lambda/2) \}} {2\kappa \vol_0(M)}.
\end{equation} 
\end{rmrk}

Now we explain how to construct a metric space $(Z,d_Z)$ in which $M_0$ and $M_j$ isometrically embed. 
We will use this metric space to calculate the flat distance between the isometric images of $M_0$ and $M_j$. 
This will give us an upper bound on the intrinsic flat distance $d_{\mathcal F} (M_0, M_j)$, c.f. Definition \ref{IF-defn}.

\begin{defn}[VADB]\label{defn-Z}
Let $M$ be a compact manifold, $M_j=(M,g_j)$ and $M_0=(M,g_0)$ be continuous Riemannian manifolds, $F_j: M_j \rightarrow M_0$ a bijective map and $W_j \subset M_j$.   
\smallskip 

Define the set
\be
Z : =  M_0  \sqcup \left(   M \times [0,h_j] \right) \sqcup  M_j  \,\,|_\sim
\ee 
where  $x \sim (F_j^{-1}(x),0)$ for all $x \in M_0$ and  $x \sim (x,h_j)$ for all $x \in W_j$.

\smallskip
Define the function $d_Z: Z \times Z \to [0, \infty)$  by
\be
d_Z(z_1, z_2) = \inf \{L_Z(\gamma):\, \gamma(0)=z_1,\, \gamma(1)=z_2\}
\ee
where $\gamma$ is any piecewise smooth curve joining $z_1$ to $z_2$ and the length function 
$L_Z$ is defined as follows,  $L_Z|_{M_j} = L_{g_j}$,  $L_Z|_{M_0} = L_{g_0}$ and $L_Z|_{M \times (0,h_j]} = L_{g_j +  dh^2}$, see 
(\ref{Ljdefn}). 
\smallskip

Define functions $\varphi_0: M_0 \to Z$ and $\varphi_j: M_j \to Z$ by 
\begin{align}
\varphi_0(x) = & (F_j^{-1}(x), 0) \\
\varphi_j(x) =   &
\begin{cases}
x   & x \notin \overline{W}_j \\
(x, h_j)  &  \textrm{otherwise.} 
\end{cases}
\end{align}
\end{defn}

Now we give some estimates on the metric space $(Z,d_Z)$ which will allow us to show that $\varphi_0$ and $\varphi_j$
isometrically embedded  $M_0$ and $M_j$ into $Z$, respectively.

\begin{lem}[Lemma 3.3 and Lemma 3.5  in VADB] \label{cnstr0-Z} 
Let $M$ be a closed manifold,  $M_0=(M,g_0)$   and $M_j=(M, g_j)$ be continuous Riemannian manifolds. 
Let  $F_j: M_j \rightarrow M_0$ be a biLipschitz and distance non-increasing map with a $C^1$ inverse.   Then $(Z, d_Z)$ is a complete metric space and for all  $(x,h),(x',h') \in M\times[0,h_j] \subset Z$,
\be\label{dZEstimatetog_0}
d_Z((x,h),(x',h'))\ge \sqrt{d_0(F_j(x),F_j(x'))^2 +|h-h'|^2} 
\ee
and
\be \label{region-dist-dec-to-Z}
d_Z((x,h),(x',h')) \le \sqrt{d_j(x,x')^2 +|h-h'|^2}.
\ee 
Furthermore, if  $\diam(M_j) \le D$,  $W_j \subset M_j$ and  for all $x,y \in W_j$ 
\be\label{eq-distCond0}
d_j(x,y) \le d_0(F_j(x), F_j(y)) +2 \delta_j
\ee
for some $\delta_j >0$ and $h_j \ge \sqrt{2 \delta_j D + \delta_j^2}$, 
then $\varphi_0: M_0 \to Z$ and $\varphi_j: M_j \to Z$ are distance preserving. 
\end{lem}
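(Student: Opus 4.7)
My strategy is to introduce two natural $1$-Lipschitz maps out of $Z$ and read all four assertions off them. Define the projection $\pi:Z\to M_0$ by $\pi|_{M_0}=\id$, $\pi(x,h)=F_j(x)$ on the cylinder, and $\pi=F_j$ on $M_j$; the identifications are respected, and $\pi$ is $1$-Lipschitz on each piece because $F_j$ is distance non-increasing and the cylinder carries the product metric $g_j+dh^2$ (so $|d\pi(v,w)|_{g_0}=|dF_j(v)|_{g_0}\le|v|_{g_j}\le\sqrt{|v|_{g_j}^2+w^2}$). Define the height $H:Z\to[0,h_j]$ to be $0$ on $M_0$, the second coordinate on the cylinder, and $h_j$ on $M_j$; it is similarly well-defined and $1$-Lipschitz piecewise. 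Combining them gives a $1$-Lipschitz map $\Phi=(\pi,H):(Z,d_Z)\to(M_0\times[0,h_j],d_{\text{prod}})$ with $d_{\text{prod}}$ the Pythagorean product metric. The space $Z$ is the quotient of a compact disjoint union by a closed equivalence relation, hence compact; $\Phi$ separates any two points except in the configuration where $z_1=x\in M_j\setminus\bar W_j$ and $z_2=(x,h_j)$ in the cylinder have the same $\Phi$-image, and in that case any path must transit through $W_j\times\{h_j\}$, giving $d_Z(z_1,z_2)\ge 2\,\mathrm{dist}_{d_j}(x,W_j)>0$. Hence $d_Z$ is a genuine metric, and completeness follows from compactness.

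\textbf{Cylinder inequalities.} For \eqref{region-dist-dec-to-Z} I would build a near-minimizer $\alpha$ in $(M,g_j)$ from $x$ to $x'$ and pair it with the affine interpolation $\beta$ between $h$ and $h'$; with both reparametrized at constant speed, $(\alpha,\beta)$ has $g_j+dh^2$-length $\sqrt{L_{g_j}(\alpha)^2+(h-h')^2}$, and letting $\alpha$ approach a minimizer yields the upper bound. For \eqref{dZEstimatetog_0}, applying $\Phi$ to any curve joining the two endpoints in $Z$ delivers the Pythagorean lower bound $\sqrt{d_0(F_j(x),F_j(x'))^2+(h-h')^2}$.

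\textbf{Distance preservation.} The map $\varphi_0$ is immediate: the upper bound uses the inclusion $M_0\hookrightarrow Z$, and the lower bound uses $\pi$. For $\varphi_j$ the upper bound is similarly witnessed by curves in $M_j$. For the lower bound, let $\gamma$ be any curve in $Z$ from $\varphi_j(x)$ to $\varphi_j(y)$. If $\gamma$ never enters $M_0$, lift it to a continuous path $\alpha$ in $(M,g_j)$ by taking the first factor on cylinder segments and the identity on $M_j$ segments (matched at the $W_j\times\{h_j\}$ identification); contraction of the product metric gives $L(\gamma)\ge L_{g_j}(\alpha)\ge d_j(x,y)$. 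If $\gamma$ does enter $M_0$, subdivide at the crossings of $h=0$ and analyze one excursion $\varphi_j(x)\to(p,0)\sim F_j(p)\to F_j(q)\sim(q,0)\to\varphi_j(y)$, whose length is at least
\[
\sqrt{d_j(x,p)^2+h_j^2}+d_0(F_j(p),F_j(q))+\sqrt{d_j(q,y)^2+h_j^2}.
\]
The triangle inequality in $M_0$ combined with $F_j$ being distance non-increasing gives $d_0(F_j(p),F_j(q))\ge d_0(F_j(x),F_j(y))-d_j(x,p)-d_j(q,y)$, and the $W_j$-hypothesis gives $d_0(F_j(x),F_j(y))\ge d_j(x,y)-2\delta_j$. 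Substituting,
\[
L(\gamma)\ge d_j(x,y)-2\delta_j+\bigl(\sqrt{d_j(x,p)^2+h_j^2}-d_j(x,p)\bigr)+\bigl(\sqrt{d_j(q,y)^2+h_j^2}-d_j(q,y)\bigr).
\]
Since $t\mapsto\sqrt{t^2+h_j^2}-t$ is decreasing in $t\ge 0$ and $d_j(x,p),d_j(q,y)\le D$, each parenthesized term is at least $\sqrt{D^2+h_j^2}-D$, and the calibration $h_j^2\ge 2\delta_j D+\delta_j^2$ rearranges exactly to $\sqrt{D^2+h_j^2}-D\ge\delta_j$. Therefore $L(\gamma)\ge d_j(x,y)$. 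Multiple $M_0$ excursions are handled by iterating, and configurations where $x$ or $y$ lies outside $W_j$ reduce to the no-$M_0$ case for the $M_j$-segment bookending the excursions.

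\textbf{Main obstacle.} The delicate point is the lower bound on $\varphi_j$ in the presence of shortcuts through $M_0$: naively $\pi$ only delivers $d_Z\ge d_0(F_j(\cdot),F_j(\cdot))$, which can underestimate $d_j$ by up to $2\delta_j$. The slack is recovered exclusively by the unavoidable vertical travel of nearly $2h_j$ that any excursion incurs, which is precisely why the calibration $h_j\ge\sqrt{2\delta_j D+\delta_j^2}$ is sharp.
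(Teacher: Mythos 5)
Your proof is correct and follows essentially the same strategy as the paper's (which in turn references VADB Lemmas 3.3 and 3.5): bound lengths of curves in $Z$ piece by piece, get the cylinder inequalities, read off $\varphi_0$ immediately, and for $\varphi_j$ split into curves that avoid $M_0$ versus curves that make excursions through $M_0$. Two points of comparison are worth making. First, what you genuinely add is the explicit calibration computation showing that the hypothesis $h_j\ge\sqrt{2\delta_jD+\delta_j^2}$ rearranges to $\sqrt{D^2+h_j^2}-D\ge\delta_j$, together with the monotonicity of $t\mapsto\sqrt{t^2+h_j^2}-t$; the paper states only the qualitative conclusion (``it is never more efficient to take advantage of shortcuts''), so your derivation makes the mechanism and the sharpness of the choice of $h_j$ transparent. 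Second, packaging the lower bounds via the $1$-Lipschitz pair $(\pi,H)$ is a cleaner bookkeeping device than writing out the pointwise length inequality, though the content is the same.

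Two small slips that do not affect the result: the bound $d_Z(z_1,z_2)\ge 2\,\mathrm{dist}_{d_j}(x,W_j)$ is not right as stated --- once a path reaches $\overline{W}_j\times\{h_j\}$, the return to $(x,h_j)$ lives in the cylinder and is only controlled by $d_0$, not $d_j$; the correct elementary bound is $d_Z(z_1,z_2)\ge\mathrm{dist}_{d_j}(x,\overline{W}_j)>0$, which still gives positivity. Also, for $x\in\overline{W}_j\setminus W_j$ the points $x\in M_j$ and $(x,h_j)$ in the cylinder are not identified by $\sim$ yet have $d_Z$-distance zero, so $d_Z$ is a priori only a pseudometric unless $W_j$ is closed; since $\varphi_j$ is defined so that its image avoids the duplicate copy, this does not affect the distance-preservation statements, but the phrase ``genuine metric'' deserves the caveat (a technicality the paper itself glosses over).
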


We note that the way $h_j$ is chosen prevents having shorter paths  between pairs of points 
either in $M_0$ or $M_j$ seen as subsets of $Z$ than the ones in $M_0$ or $M_j$.

\begin{proof}
Here we review the proofs of Lemma 3.3 and Lemma 3.5 in VADB paying particular attention to the fact that $M_0$ and $M_j$ are continuous Riemannian metrics.

Let $C(t)=(\gamma(t),h(t))$, $t \in [0,1]$, be a curve connecting $(x,h),(x',h') \in M \times [0,h_j]\subset Z$ where it is enough to assume that $C(t) \subset M \times [0,h_j]$. Then by the definition of the length structure on Z we find
\begin{align}
\int_0^1 \sqrt{g_0(\gamma',\gamma')+ h'^2}dt\le L_Z(C) \le \int_0^1 \sqrt{g_j(dF^{-1}_j(\gamma'),dF^{-1}_j(\gamma'))+ h'^2}dt,\label{LengthInequality}
\end{align}
and hence by taking the infimum of lengths in \eqref{LengthInequality} we find the desired inequalities for the associated distance functions.

By \eqref{dZEstimatetog_0} it is clear that $\varphi_0(M_0)$ is distance preserving, as shown in Lemma 3.5 of VADB.

 Now by choosing $h_j \ge \sqrt{2\delta_jD+\delta_j^2}$, \eqref{eq-distCond0}, and the estimates on $d_Z$ we ensure that for points in $\overline{W}_j \times \{h_j\}$ it is never more efficient to take advantage of shortcuts in $M \times \{0\}\subset Z$  and hence for any curve $C$ connecting  points in $\overline{W}_j \times \{h_j\}$ we find
 \begin{align}
 L_{Z}(C) \ge d_j(p,q).\label{LengthDistanceInequality1}
 \end{align}

Then by the fact that points in $\varphi_j(M \setminus \overline{W}_j)$ are not glued to $M \times \{h_j\}$, and hence must enter $\overline{W}_j \times \{h_j\}$ before attempting to take advantage of shortcuts in $M \times\{0\}\subset Z$, we are able to conclude that for points in $\varphi_j(M \setminus \overline{W}_j)$ and curves $C$ connecting them
\begin{align}
 L_{Z}(C) \ge d_j(p,q).\label{LengthDistanceInequality2}
 \end{align}

Now for $p,q \in M_j$, by taking curves $C_i \subset \varphi_j(M_j)$ connecting $\varphi_j(p),\varphi_j(q)$ whose lengths converge to the distance $d_j(p,q)$ we can combine with \eqref{LengthDistanceInequality1} and \eqref{LengthDistanceInequality2} to find
\begin{align}
d_Z(   \varphi_j(p),\varphi_j(q) ) = d_j(p,q),
\end{align}
and hence we can conclude that $\varphi_j(M_j)$ is also distance preserving.
\end{proof}
  

Now we can calculate the flat distance between $\varphi_{j_\sharp}[[M_j]]$ and $\varphi_{0_\sharp}[[M_0]]$. 

\begin{thm}[c.f. Theorem 3.1 in  VADB]\label{est-SWIF}
Let $M$ be an oriented and compact manifold, $M_j=(M,g_j)$ and $M_0=(M,g_0)$ be continuous Riemannian manifolds with  $\diam(M_j) \le D$,  $\vol(M_j)\le V$, $\vol(\partial M_j) \leq A $
and $F_j: M_j \rightarrow M_0$ a biLipschitz  and distance non-increasing map with a $C^1$ inverse. 
Let $W_j \subset  M_j$ be a measurable set with 
\be\label{eq-volCond}
\vol( M_j \setminus W_j) \le V_j
\ee
and assume that there exists a $\delta_j > 0$ so that for all $ x,y \in W_j$,
 \be\label{eq-distCond}
d_j(x,y) \le d_0( F_j(x),  F_j(y)) +2 \delta_j
\ee
and that $h_j \ge \sqrt{2 \delta_j D + \delta_j^2}$.
Then
\be\label{Fest}
d^Z_{F}( \varphi_0(M_0), \varphi_j(M_j)) \le 2V_j + h_j V +  h_j A 
\ee
where  $Z$ is the space described in Definition \ref{defn-Z}.
\end{thm}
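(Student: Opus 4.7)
The plan is, per Definition \ref{IF-defn}, to produce $U \in \intcur_m(Z)$ and $S \in \intcur_{m+1}(Z)$ with $\varphi_{j\sharp}[[M_j]] - \varphi_{0\sharp}[[M_0]] = U + \partial S$ and $\mass(U) + \mass(S) \le 2V_j + h_j V + h_j A$. By Lemma \ref{cnstr0-Z}, the hypotheses $h_j \ge \sqrt{2\delta_j D + \delta_j^2}$ and (\ref{eq-distCond}) ensure $(Z,d_Z)$ is a complete metric space and $\varphi_0, \varphi_j$ are isometric embeddings, so both pushforwards are honest integral $m$-currents in $Z$. The natural filling will be the cylinder interpolating bottom and top faces; the excess will collect both the part of the top face that is not glued to $M_j$ and the lateral face produced by $\partial M \ne \emptyset$.

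Let $\iota: M \times [0,h_j] \hookrightarrow Z$ be the canonical inclusion and set $S := \iota_\sharp [[M \times [0,h_j]]]$. Because the cylinder carries the product Riemannian metric $g_j + dh^2$, we have $\mass(S) \le h_j \vol_j(M) \le h_j V$. By the boundary formula for a product current,
\[
\partial S = [[M \times \{h_j\}]] - [[M \times \{0\}]] + \varepsilon\, [[\partial M \times [0,h_j]]] \qquad \text{in } Z,
\]
with $\varepsilon = \pm 1$ (the sign is immaterial for the mass estimate). Under the identification $x \sim (F_j^{-1}(x),0)$, and using that $F_j$ is orientation-preserving biLipschitz, we have $[[M \times \{0\}]] = \varphi_{0\sharp}[[M_0]]$ as currents in $Z$. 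Rearranging,
\[
\varphi_{j\sharp}[[M_j]] - \varphi_{0\sharp}[[M_0]] = U + \partial S, \qquad U := \bigl(\varphi_{j\sharp}[[M_j]] - [[M \times \{h_j\}]]\bigr) - \varepsilon\, [[\partial M \times [0,h_j]]].
\]

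It remains to estimate $\mass(U)$. Under the identification $x \sim (x,h_j)$ for $x \in W_j$, both $\varphi_{j\sharp}[[M_j]]$ and $[[M \times \{h_j\}]]$ coincide on $\overline{W}_j$ (each carries the $g_j$ volume form there), so their difference is supported on the disjoint union $\varphi_j(M \setminus \overline{W}_j) \sqcup (M \setminus \overline{W}_j) \times \{h_j\}$ in $Z$, and hence has mass at most $2\vol_j(M \setminus W_j) \le 2V_j$ by (\ref{eq-volCond}). The lateral piece has mass $\vol(\partial M_j) \cdot h_j \le A h_j$ with respect to the restricted product metric. Adding these to $\mass(S)$ gives the claimed bound. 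The only genuinely new feature compared with Theorem 3.1 of VADB is the lateral face $\partial M \times [0,h_j]$ appearing in $\partial S$, which is precisely the source of the extra $h_j A$ summand; beyond this bookkeeping no serious obstacle arises, since Lemma \ref{cnstr0-Z} was already proved without assuming $\partial M = \emptyset$.
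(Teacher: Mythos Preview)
Your proof is correct and follows essentially the same approach as the paper: the filling $S$ is the paper's $T=[[M_j\times[0,h_j]]]$, and your excess $U$ is exactly the paper's $T'=[[M_j\setminus W_j]]-[[(M_j\setminus W_j)\times\{h_j\}]]-[[\partial M_j\times[0,h_j]]]$ once one expands $\varphi_{j\sharp}[[M_j]]-[[M\times\{h_j\}]]$ using the identification on $W_j$. The only cosmetic difference is that the paper fixes the sign of the lateral face rather than carrying an $\varepsilon$, and records the mass bounds via the $1$-Lipschitz inclusion $\iota_j$ explicitly.
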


To not overburden the proof of Theorem \ref{est-SWIF} with notation we give a short proof below. 
In Section \ref{sec-app} we provide a detailed one.  The main difference is that the second one explains how the orientation comes into 
play in the definition of the various integral currents. 

\begin{proof}
Apply Lemma  \ref{cnstr0-Z}  to get a metric space $(Z,d_Z)$ and distance preserving maps $\varphi_0: M_0 \to Z$ and $\varphi_j : M_j  \to Z$.   
We will define integral currents $T \in  \intcur_{m+1}(Z) $ and $T' \in  \intcur_m(Z)$ such that 
\begin{align}
\varphi_{j\#}[[M_j]]- \varphi_{0\#}[[ M_0]]   & =  \partial T + T' \\
\mass(T)    & \leq  h_j V \\ 
 \mass(T')   & \leq  2V_j  +h_j A.
\end{align}
Then by the definition of flat convergence 
\begin{align}
d^Z_{F}( \varphi_j(M_j), \varphi_0( M_0)) \le  & \mass(T) + \mass(T')
\end{align}
and the mass estimates we will get (\ref{Fest}).
   
Since $\varphi_0$ and $\varphi_j$ are distance preserving maps,  
\begin{eqnarray}
\varphi_{0\#}[[ M_0]]  &  =  &  [[M_j \times \{0\}]] \\
\varphi_{j\#}[[ M_j]]  &  =  &  [[W_{j}  \times \{h_j \}  ]]  +  [[M_j \setminus W_{j}]]. 
\end{eqnarray}
Then define
\begin{align}
T=  & [[  \,M_j \times [0,h_j] \,]]   \in  \intcur_{m+1}(Z) \\
T'   =  & [[M_j \setminus W_{j}]]  -    [[   (M_j \setminus W_{j}  ) \times \{h_j \} ]]  - [ [\, \partial M_j \times [0,h_j] \,]]    \in  \intcur_{m}(Z) . 
\end{align}
They are integral currents since their boundaries are the following currents 
\begin{align}
\partial T = &  [[ M_j  \times \{h_j \}  ]]  -   [[M_j \times \{0\}]]  +  [ [\, \partial M_j \times [0,h_j] \,]] \\
\partial  T'  =  & \partial \left( [[M_j \setminus W_{j}]]  -  [[ (M_j \setminus W_{j}  ) \times \{h_j \} ]]  \right) -  \partial [ [\, \partial M_j \times [0,h_j] \,]]\\
 = & 0  -  \partial [ [\, \partial M_j \times [0,h_j] \,]].
\end{align}
Noticing that 
\begin{align}
T' = &  \varphi_{j\#}[[M_j]]   -     [[ M_j \times \{h_j \} ]]  - [ [\, \partial M_j \times [0,h_j] \,]]  
\end{align}
and combining the equations above, we conclude that 
\be
\varphi_{j\#}[[M_j]]- \varphi_{0\#}[[ M_0]] = \partial  T + T'. 
\ee
Now by the definition of $T$ and $T'$,
\begin{align}
\mass(T) \leq  & \vol(M_j \times [0,h_j]) \leq Vh_j\\
\mass(T')  \leq  & 2 \vol(M_j \setminus W_{j})    +   \vol( \partial M_j\times [0,h_j]) \\
    \leq & 2 V_j +   Ah_j.
\end{align}
\end{proof}


\subsection{Proofs  of  New Theorems}

We are ready to show intrinsic flat convergence to $M_0$ under the assumptions of Theorems  \ref{thm-IFconvC0B} - \ref{convBdry}.

\begin{proof}[Proof of Theorem \ref{thm-IFconvC0B}]
Since $\vol_j(M)  \to \vol_0(M)$, there is  $V>0$ that uniformly bounds  $\vol_j(M)$ from above. 
For any $\kappa >1$ and $\lambda \in  (0, \diam(M_0))$, since we have almost everywhere convergence of the distance functions, (\ref{convae2}),   we can apply Lemma \ref{unif-on-W}.   Thus there exists a set 
 $W_{\lambda, \kappa} \subset M$ such that for all $p_1, p_2 \in W_{\lambda, \kappa}$
 \be
|d_j(p_1,p_2)-d_0(p_1,p_2)| < 2 \lambda + 2\delta_{\lambda, \kappa, j}
\ee
where $\delta_{ \lambda, \kappa,  j}  \to 0$ as $j \to \infty$ and 
\be\label{vol1}
\vol_j( M \setminus W_{\lambda, \kappa,j}) \le  \frac{1}{\kappa}\vol_0( M)+|\vol_j( M)-\vol_0( M)|.
\ee

Now recalling the definition of intrinsic flat distance and applying  Lemma \ref{est-SWIF} we get,
\begin{align}
d_{\mathcal{F}}(M_0,M_j) \le  &  d^{Z_{\lambda, \kappa, j}}_{F}( \varphi_0 (M_0), \varphi_j(M_j)) \\
\le &  2\vol_j( M \setminus W_{\lambda, \kappa,j}) + h_j V  + h_j A  \\
\le & 2\left( \tfrac{1}{\kappa}\vol_0(M)+  |\vol_j(M)-\vol_0(M)| \right)  +   h_j  V     + h_j A 
\end{align}
where $h_j =   \sqrt{2(\lambda+\delta_{\lambda, \kappa, j})D  +   (\lambda+\delta_{ \lambda,  \kappa,   j})^2}$. 
Since the volumes converge \eqref{volsconv} and  $\delta_{ \lambda, \kappa,  j}  \to 0$ as $j \to \infty$,  we find
\begin{align}
\limsup_{j\rightarrow \infty} d_{\mathcal{F}}(M_0,M_j) \le  \tfrac{2}{\kappa} \vol_0(M)+   \sqrt {2 \lambda D  +   \lambda^2} (V + A) .
\end{align}
Since this is true for any $\kappa >1$ and $\lambda  \in  (0, \diam(M_0))$ we find that
\begin{align}
\limsup_{j\rightarrow \infty} d_{\mathcal{F}}(M_0,M_j) =0.
\end{align}
\end{proof}

Now we show convergence when the interior of $M_0$ is convex.  

\begin{proof}[Proof of Theorem \ref{convBdry}]
The theorem follows applying Theorem \ref{thm-IFconvC0B}. 
Thus, we only have to show that there exists a subsequence of $d_j$
that converges almost everywhere to $d_0$. 
We recall that in the proof of  Theorem \ref{PointwiseConvergenceAE} it was shown that 
for any $p \in M_0$ and all $q  \notin C(p)$, where $C(p)$ denotes the cut locus of $M_0$, there exists an open set $\mathcal{U}(p,q)   \subset M_0 \times M_0$
that contains $(p,q)$ such that 
\begin{align}
   \int_{\mathcal{U}(p,q)} |d_j(p',q') - d_0(p',q')|  \dvol_0\times \dvol_0  \rightarrow 0. 
   \label{IntegralToZero}
\end{align}
The set $\mathcal{U}(p,q)$ was found by choosing a tubular neighborhood around the minimizing geodesic from $p$ to $q$ 
in such a way the exponential map remained a diffeomorphism in the tubular neighborhood.   

Given that the interior of $M_0$ is convex, even if it has boundary, the same result holds 
for any $p \in M_0 \setminus \partial M_0$ and all $q  \notin C(p) \cup \partial M_0$.  
Now we proceed as in the final steps of the proof of Theorem \ref{PointwiseConvergenceAE}. We define
\begin{align}
    \mathcal{S}=    \bigcup_{p\in M}    \{p\}  \times (C(p) \cup \partial M_0). 
\end{align}
Since $\mathcal{S}$ has zero measure,
it is enough to show that  \eqref{convae2} holds for a subset of full measure in $M_0\times M_0 \setminus \mathcal{S}$.

Consider the map $\Psi: M_0\times M_0 \setminus \mathcal{S} \rightarrow \R$ given by 
$\Psi(p,q) = d_0(q,C(p))$.  Given that $M_0$ is compact and $\Psi$ is continuous, the sets
\begin{align}
K_i=\{(p,q) \in M\times M \setminus \mathcal{S}: \Psi(p,q)\ge 1/(1+i)\} \subset M\times M \setminus \mathcal{S}
\end{align}
are compact and satisfy  by definition $K_i \subset K_{i+1}$ and $\displaystyle \bigcup_{i=1}^{\infty} K_i = M_0 \times M _0 \setminus \mathcal{S}$. 
Note that $ \{\mathcal{U}{(p,q)}: (p,q) \in M_0\times M_0   \setminus \mathcal{S}\}$
is an open cover of $M_0\times M_0 \setminus \mathcal{S}$ and hence an open cover of $K_i$ for each $i \in \N$. Thus we can choose a finite subcover $\{\mathcal U_1,...,\mathcal U_{I_1}\}$ of $K_1$,  and then extend  it to a finite subcover of $K_2$, $\{\mathcal U_1,...,\mathcal U_{I_1},..., \mathcal U_{I_2}\}$, and continue in this way and get  a countable collection of elements  $\{\mathcal U_i\}_{i \in \N}$  so that
\begin{align}
M\times M\setminus \mathcal{S} \subset \bigcup_{i\in \N} \mathcal U_i.
\end{align}
By \eqref{IntegralToZero} we can choose a subsequence $d_j \rightarrow d_0$ 
that converges  almost everywhere on $\mathcal U_1$. Then we can choose a further subsequence $d_j \rightarrow d_0$ that converges almost everywhere on $\mathcal U_2$ and continue to build a nested sequence of subsequences that converges almost everywhere on each  $\mathcal U_i$. By extracting a diagonal subsequence we then obtain
\begin{align}
    d_{j}(p,q) \to d_0(p,q)    \quad  \dvol_{g_0}\times \dvol_{g_0} \text{a.e.} \,\,(p,q) \in  M\times M \setminus \mathcal S.
\end{align}

\end{proof}


\section{ $\delta$-Doubling  Metrics}\label{Doubling}

Let $M^m$ be a compact manifold with boundary,  $(M,g_j)$ a sequence of Riemannian manifolds and $(M,g_0)$ a background Riemannian manifold. 
Our goal is to show that under the conditions of Theorem \ref{vol-thm-boundary} with $F_j$ equal to the identity map 
we have up to a subsequence that
\begin{align}
    d_j(p,q) \rightarrow d_0(p,q)  \quad   \dvol_0 \times \dvol_0\,    \text{a.e.} \, (p,q) \in M \times M.
\end{align}
 Once we have this almost everywhere convergence we will be able to apply Theorem \ref{thm-IFconvC0B}
to conclude the proof of Theorem \ref{vol-thm-boundary}.   To accomplish our goal we will double $M$ by attaching two copies of $M$ to a neck of the form $\partial M \times [-\delta, \delta]$.
In this way we get manifolds $\hat M^\delta$   with no boundary. Then by defining Riemannian metrics $g_j^\delta$ that satisfy 
the conditions of  Allen-Sormani's Theorem \ref{PointwiseConvergenceAE},  we get  up to a subsequence that
\be
d^\delta_j(p,q) \rightarrow d^\delta_0(p,q)  \quad    \dvol_{g^\delta_0} \times \dvol_{g^\delta_0}\, \text{a.e.} \, (p,q) \in \hat M^\delta \times \hat M^\delta.
\ee
Then by the construction of $g^j_\delta$  we will be able to pass to the limit as $\delta$ goes to zero and get the conclusion.    

Before constructing the family of $\delta$-doubled Riemannian manifolds we will prove a lemma which allows us to realign the coordinates near the boundary of $M$ so that the normal vector with respect to $g_j$ and $g_0$ is the same vector in the tangent space.

\begin{lem}\label{RealignCoordsLem}
Let $M$ be a smooth, compact, oriented, and connected manifold with boundary. Assume $M_0=(M,g_0)$, $M_1=(M,g_1)$ are Riemannian manifolds so that
\begin{align}
g_0(v,v) < g_1(v,v), \quad \forall p \in M, v \in T_pM, v \not = 0,
\end{align}
then there exists a diffeomorphism
\begin{align}
F_1:M \rightarrow M,
\end{align}
so that 
\begin{align}
g_0(v,v) \le (F_1^*g_1)(v,v), \quad \forall p \in M, v \in T_pM,
\end{align}
and for $p \in \partial M$ we have that there exists a $\nu\in T_pM$ which is the inward pointing normal vector with respect to $g_0$ and $F_1^*g_1$.
\end{lem}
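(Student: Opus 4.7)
The plan is to build $F_1$ as a perturbation of the identity supported in a thin $g_0$--collar of $\partial M$, chosen so that at each $p \in \partial M$ the differential $dF_1$ straightens the $g_0$--normal direction onto a positive scalar multiple of the $g_1$--normal direction. The starting observation is that the strict pointwise inequality $g_0 < g_1$, together with compactness of the $g_0$--unit sphere bundle over $M$, yields a uniform gap
\bee
(1+\eta)\, g_0(v,v) \le g_1(v,v) \qquad \forall\, v \in TM
\eee
for some $\eta > 0$, which will provide the slack needed to preserve the pull--back inequality after the straightening.

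First, fix a $g_0$--collar $\Phi_0 \colon \partial M \times [0,\varepsilon_0) \to U_0 \subset M$ given by $\Phi_0(x,s) = \exp_x^{g_0}(s\,\nu_{g_0}(x))$, in which $g_0 = h_0(x,s) + ds^2$ and the inward $g_0$--unit normal is $\partial_s|_{s=0}$. At each $p = (x,0) \in \partial M$, decompose the inward $g_1$--unit normal in these coordinates as $\nu_{g_1}(x) = X(x) + a(x)\,\partial_s$ with smooth $X \in \Gamma(T\partial M)$ and $a > 0$. Pick a constant $c > 1$ and a smooth cutoff $\chi\colon [0,\infty) \to [0,1]$ with $\chi(0) = 1$, $\chi'(1) = 0$ and $\chi \equiv 0$ on $[1,\infty)$. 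For a sufficiently small $\varepsilon \in (0,\varepsilon_0)$, define $F_1$ to be the identity outside $\Phi_0(\partial M \times [0,\varepsilon))$ and in $\Phi_0$--coordinates by
\bee
F_1(x,s) = \Bigl(\exp_x^{\partial M}\!\bigl(s\,\chi(s/\varepsilon)\,cX(x)\bigr),\; s\bigl(1 + \chi(s/\varepsilon)(ca(x)-1)\bigr)\Bigr),
\eee
where $\exp^{\partial M}$ is the exponential map on $(\partial M, g_0|_{\partial M})$. For suitable $c, \chi, \varepsilon$ this $F_1$ is a smooth diffeomorphism of $M$ with $F_1|_{\partial M} = \id$ and $dF_1(\partial_s)|_{s=0} = c\,\nu_{g_1}$.

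Next I would verify $g_0 \le F_1^*g_1$ at $\partial M$ by a direct computation in the $T_p\partial M \oplus \mathbb{R}\partial_s$ splitting: the tangential block gives $(F_1^*g_1)(w,w) = g_1(w,w) \ge (1+\eta)\,g_0(w,w)$ since $dF_1$ restricts to the identity on $T_p\partial M$; the normal entry is $(F_1^*g_1)(\partial_s,\partial_s) = c^2\,g_1(\nu_{g_1},\nu_{g_1}) = c^2 > 1 = g_0(\partial_s,\partial_s)$; and the cross term is $(F_1^*g_1)(w,\partial_s) = c\,g_1(w,\nu_{g_1}) = 0 = g_0(w,\partial_s)$. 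Thus $F_1^*g_1 - g_0$ is positive definite at $\partial M$ with uniform gap at least $\min(\eta, c^2-1)\,g_0$. Outside the collar, $F_1 = \id$ gives $F_1^*g_1 = g_1 \ge (1+\eta)g_0$. The vector $\nu := \partial_s|_{s=0}$ is then the inward normal with respect to both $g_0$ and $F_1^*g_1$: it is $g_0$--normal by construction, and $F_1^*g_1$--normal since $(F_1^*g_1)(\nu,w) = g_1(c\nu_{g_1}, w) = 0$ for $w \in T_p\partial M$, while inward--pointing is a metric--independent notion.

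The main obstacle is verifying $F_1^*g_1 \ge g_0$ throughout the interior of the collar $\{0 < s < \varepsilon\}$, where $F_1$ transitions from the tilt at $s=0$ to the identity at $s=\varepsilon$. Since $dF_1$ is not $C^0$--close to $\id$ no matter how small $\varepsilon$ is (the tilt $dF_1(\partial_s)|_{s=0} = c\nu_{g_1}$ is prescribed at the boundary), a naive smallness--of--perturbation argument fails. Instead, one parametrizes by $u = s/\varepsilon$, writes each block of $F_1^*g_1 - g_0$ as a continuous function of $(x,u)$ plus a correction of size $O(\varepsilon)$, and shows positivity directly: the tangential block differs from $g_1 - g_0 \ge \eta g_0$ by controlled terms; the normal block interpolates between $c^2$ at $u=0$ and $g_1(\partial_s,\partial_s) \ge 1+\eta$ at $u=1$ and can be kept bounded below by a positive constant throughout with $c$ chosen appropriately; the cross terms vanish at $s=0$ and are absorbed into the diagonal gaps via Cauchy--Schwarz once $\varepsilon$ is sufficiently small. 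Carrying out these block--by--block estimates is the technical crux of the proof.
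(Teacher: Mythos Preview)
Your approach is genuinely different from the paper's, and the difference is instructive. You work in a single $g_0$--collar and write down an explicit global diffeomorphism with a cutoff; the paper instead uses \emph{both} normal flows $\Phi_0,\Phi_1$ (along $\nu_{g_0}$ and $\nu_{g_1}$) and sets $F_1=\Phi_0\circ\Phi_1^{-1}$ on a collar. The payoff of the paper's choice is that in the two Fermi coordinate systems the metrics are simultaneously block--diagonal: one gets $(F_1^*g_1)|_{\text{collar}}=dt^2+h_1(z,t)$ and $g_0|_{\text{collar}}=dt^2+h_0(z,t)$ with identical normal blocks and zero cross terms, so $F_1^*g_1\ge g_0$ on the collar reduces to $h_1(z,t)\ge h_0(z,t)$, which holds for small $t$ by continuity of the strict inequality at $t=0$. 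The price the paper pays is that $F_1$ is only defined on the collar and must then be extended to all of $M$; this is done by a compactness/contradiction argument (for small enough collar width the required interpolation to the identity is $C^1$--close to $\id$, and $g_1>g_0$ gives room). Your construction, by contrast, is global from the start---that is a real advantage---but you pay for it in the transition region.

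The gap in your sketch is precisely there. Your claim that ``the cross terms vanish at $s=0$ and are absorbed into the diagonal gaps via Cauchy--Schwarz once $\varepsilon$ is sufficiently small'' is not the right mechanism. A short computation shows that, to leading order in $\varepsilon$, $dF_1(\partial_s)$ at height $u=s/\varepsilon$ equals $V(u)=\lambda(u)\,c\,\nu_{g_1}+(1-\lambda(u))\,\partial_s$ with $\lambda(u)=\chi(u)+u\chi'(u)=(u\chi(u))'$, and the cross term of the limiting form is $(F_1^*g_1-g_0)(w,\partial_s)=(1-\lambda(u))\,g_1(w,\partial_s)$. This is $O(1)$ in $\varepsilon$, not small, so shrinking $\varepsilon$ does nothing for it; positivity of the limiting quadratic form $Q_u$ must be argued structurally for every $u\in[0,1]$, using the interplay of $c$, $\chi$, and the boundary data $\beta=g_1(\nu_{g_1},\partial_s)$, $\gamma=g_1(\partial_s,\partial_s)$. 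Relatedly, since $\int_0^1\lambda(u)\,du=(u\chi(u))|_0^1=0$ and $\lambda(0)=1$, the function $\lambda$ must become negative, and then the diffeomorphism condition $\psi'(u)=1+(ca-1)\lambda(u)>0$ also constrains $c$ nontrivially. These obstacles are not insurmountable (for instance $a<1$ always, which helps), but your outline does not address them, and they do not dissolve as $\varepsilon\to 0$. The paper's two--flow trick is exactly what eliminates the cross terms and makes the collar estimate effortless.
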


\begin{proof}
Let $\nu_0$ and $\nu_1$ be the inward pointing unit normal vectors to $\partial M = \Sigma$ with respect to $g_0$ and $g_1$, respectively. Then let $\Phi_i: \Sigma\times [0,\varepsilon) \rightarrow M$, $i=0,1$ be a one parameter family of smooth bijective maps which solve
\begin{align}
\frac{\partial \Phi_i}{\partial t} = \nu_i, i=0,1,
\end{align}
which we know has a solution for at least a short time $t \in [0,\varepsilon_i)$, $i=0,1$. Let $\varepsilon=\min(\varepsilon_0,\varepsilon_1)$ and define $h_i=g_i|_{\Sigma}$, $\Sigma_t^i=\Phi_i(\Sigma,t)$. If $h_i(z,t)$ is the evolution of the metric of $\Sigma_t^i$ defined on $\Sigma$ and $v \in T_z\Sigma$ then 
\begin{align}
h_i(z,t)(v,v)= (g_i)_{\Phi_i(z,t)}(d\Phi_i(v),d\Phi_i(v)).
\end{align}
Note that if $z\in \Sigma$ then $\Phi_0(z,0)=\Phi_1(z,0)$ and this point can be tracked to a point in $\Sigma_t^i$ through the map $\Phi_i(z,t)$. By assumption
\begin{align}
h_0(z,0)(v,v)<h_1(z,0)(v,v), \quad \forall v \in T_z\Sigma,
\end{align}
and hence there must exist an $\eta > 0$ so that 
\begin{align}
h_1(z,0)(v,v)-h_0(z,0)(v,v) > \eta, \quad \forall z \in \Sigma, v \in T_z\Sigma, |v|_{g_0}=1.
\end{align}
Since $\Phi_i$ are smooth maps and the unit tangent bundle over $\Sigma$ is compact there exists a $0<\varepsilon' < \varepsilon$ such that
\begin{align}
h_0(z,t)(v,v)<h_1(z,t)(v,v), \quad \forall z \in \Sigma, v \in T_z\Sigma, |v|_{g_0}=1, t \in [0,\varepsilon'],
\end{align}
which implies
\begin{align}
h_0(z,t)(v,v)<h_1(z,t)(v,v), \quad \forall z \in \Sigma, v \in T_z\Sigma, t \in [0,\varepsilon'].
\end{align}
Now we can define a portion of the map $F_1:M \rightarrow M$ using the coordinates on $\Sigma \times [0, \varepsilon']$ by
\begin{align}
F_1(\Phi_1(z,t)) &= \Phi_0(z,t),
\\ \Phi_1(z,t) &= F_1^{-1}(\Phi_0(z,t)),
\end{align}
so that
\begin{align}
(F_1^{-1})^*(\nu_0)= (F_1^{-1})^*\left(\frac{\partial \Phi_0}{\partial t}\right) = \frac{\partial}{\partial t}\left(F_1^{-1}(\Phi_0(z,t)) \right ) = \frac{\partial \Phi_1}{\partial t}  = \nu_1,
\end{align}
and
\begin{align}
(F_1)^*(g_1)_{\Phi_0(z,t)}&(d\Phi_0(v),d\Phi_0(v)) 
\\&= (g_1)_{\Phi_1(z,t)}(dF_1^{-1}(d\Phi_0(v)),dF_1^{-1}(d\Phi_0(v)))
\\&= (g_1)_{\Phi_1(z,t)}(d\Phi_1(v),d\Phi_1(v))
\\&= h_1(z,t)(v,v)
\\&\ge h_0(z,t)(v,v)= g_0(d\Phi_0(v),d\Phi_0(v)).
\end{align}

Now we want to extend the definition of $F_1$ to all of $M$ so that it remains a diffeomorphism and a distance non-increasing map. Let $0<t \le \varepsilon'$ and define  $V^i_t=\Phi_i(\Sigma \times [0,\min\{2t,\varepsilon'\}])$, $V_t = V_t^0 \cup V_t^1$, and $U_t=M \setminus V_t$ then we want to extend the definition of $F_1$ to all of $M$ so that for any $w \in T_pM, p\in M$
\begin{align}
(g_1)_{F_1^{-1}(p)}(dF_1^{-1}(w),dF_1^{-1}(w)) &\ge (g_0)_p(w,w),\label{Cond1}
\\F_1(\Phi_1(z,t)) &= \Phi_0(z,t),\quad (z,t) \in \Sigma \times [0,t],\label{Cond2}
\\ F_1(p)&=p, \quad p \in U_t.\label{Cond3}
\end{align}
Notice that we can clearly extend $F_1$ so that \eqref{Cond2} and \eqref{Cond1} are satisfied and we claim that there exists a $\varepsilon''$, $0 < \varepsilon''\le \varepsilon'$,   so that for $t \in (0,\varepsilon'')$ there exists a map $F_1$ which satisfies \eqref{Cond1} in addition to  \eqref{Cond2} and \eqref{Cond3}. For sake of contradiction assume that this is not the case, and let $F_1^t:M \rightarrow M$ be a map which extends $F_1$ to all of $M$ which satisfies \eqref{Cond2}, \eqref{Cond3}, and so that
\begin{align}
|\nabla^{g_1} d(F_1^t)^{-1}|_{g_1}+|d(F_1^t)^{-1}|_{g_1} \le C,\label{DiffBound}
\end{align}
but so that \eqref{Cond1} is not satisfied. We are justified in assuming \eqref{DiffBound} since we are requiring the map $F_1^{t}$ to differ less and less from the identity map as $t \rightarrow 0$. If we let $t_i$ be a sequence so that $t_i \in (0,\varepsilon')$ and $t_i \rightarrow 0$ then we know there is a subsequence of $(F_1^{t_i})^{-1}$ which converges in $C^1$ (with respect to $g_1$) to a map $F_{\infty}:M \rightarrow M$. By \eqref{Cond3} we find pointwise convergence of $(F_1^{t_i})^{-1}$ to the identity map and hence $F_{\infty}(p)=p$, $\forall p \in M$.

By the contradiction hypothesis we know there must exist a $p_i \in M$, $w_i \in T_{p_i}M$, $|w_i|_{g_0}=1$ so that 
\begin{align}
|d(F_1^{t_i})^{-1}(w_i))|^2_{(g_1)_{(F_1^{t_i})^{-1}(p_i)}} &< |w_i|^2_{(g_0)_{p_i}}\label{ContradictionEq1}
\end{align}

Now since $\{(p_i,w_i)\} \subset TM$, $|w_i|_{g_0}=1$,  and $M$ is compact we know there exists a subsequence which converges to $(p_{\infty},w_{\infty}) \in TM$, $w_{\infty} \not = 0$. Hence by \eqref{ContradictionEq1} we find
\begin{align}
|w_{\infty}|^2_{(g_1)_{p_{\infty}}} &= |d(F_{\infty})^{-1}(w_{\infty}))|^2_{(g_1)_{(F_{\infty})^{-1}(p_{\infty})}}
\le  |w_{\infty}|^2_{(g_0)_{p_{\infty}}},\label{ContradictionEq2}
\end{align}
which is a contradiction.
\end{proof}

Now we construct the family of $\delta$-doubling  Riemannian manifolds by modifying the doubling construction of Bray \cite{Bray-Penrose} in the proof of Theorem 9. 

\begin{thm}\label{FamilyOfMetrics}
Suppose we have a fixed smooth compact, oriented, and connected Riemannian manifold with boundary, $M_0=(M^m,g_0)$,
and  a sequence of smooth metric tensors $g_j$ on $M$ defining $M_j=(M, g_j)$.  Assume that
\begin{align}
g_0(v,v) < g_j(v,v), \quad \forall p \in M, v \in T_pM, v \not = 0.
\end{align}
   Let $\Sigma := \partial M$  and $h$ be a background Riemannian metric on $\Sigma$.  
For $\delta > 0$  consider
\begin{align}
\hat{M^\delta} = M \sqcup \Sigma \times [-\delta, \delta] \sqcup M.
\end{align}
Then there exists $\hat{\delta}>0$ such that for $\delta < \hat{\delta}$ there exist Riemannian metrics 
$g_\alpha^{\delta}$  on $\hat M^\delta$,  $\alpha \in \mathbb N \cup \{0\}$,  
so that  $g_\alpha^{\delta}=g_\alpha$ on both factors of $M$ inside $\hat{M}^\delta$.
If $(z,t)$ are coordinates on $\Sigma \times [-\delta,\delta]$ and $v,w$ are vector fields on $\Sigma$, the restricted metrics $h_{\alpha}^{\delta}:=g_{\alpha}^{\delta}|_{\Sigma \times \{t\}}$ satisfy 
\begin{align}\label{FToCalpha}
h_\alpha^{\delta}(v,w)(z,t) & = h_\alpha^{\delta}(v,w)(z,-\delta)+2\int_{-\delta}^t A_0^{\delta}(v,w)(z,s) ds,    
\end{align}
\begin{align}
h_\alpha^{\delta}(v,w)(z,t)  = h_\alpha^{\delta}(v,w)(z,-t) ,
\end{align}
and
\begin{align}\label{UniformMetricDeltaConvergence}
|h_\alpha^{\delta}(z,-\delta) - h_\alpha^{\delta}(z,t)|_{h} &  \le 4(m-1)C \delta, \quad \forall t \in[-\delta,\delta],
\end{align}
where $A_0^\delta$ is a smooth, symmetric tensor on $\Sigma \times [-\delta,\delta]$ which agrees smoothly with the second fundamental form of $\partial M \subset M_0$ at $\Sigma \times \{-\delta\}$, is an odd function in $t$
\begin{align}
A_0^{\delta}(z,t)= -A_0^{\delta}(z,-t),
\end{align}
and has bounded norm as a tensor on $\Sigma \times \{t\}$
\begin{align}
|A_0^{\delta}|_h \le C.\label{ABound1}
\end{align}
Furthermore, $g_0^\delta$ is a smooth Riemannian metric while $g_j^\delta$, $j \in \N$ are continuous Riemannian metrics. $\hat{\delta} = \hat{\delta}(C,g_0|_{\Sigma})$ and there exists a $\eta=\eta(\hat{\delta})>0$ so that 
\begin{align}
h_0^{\delta}(v,v)(z,t) \ge \eta h(v,v), \quad \forall v \in T\Sigma, (z,t) \in \Sigma \times [-\delta,\delta],
\end{align}
for all $\delta < \hat{\delta}$.
\end{thm}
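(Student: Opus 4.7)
The plan is to work in a common Fermi-type collar of $\partial M$, construct a single smooth interpolating tensor $A_0^\delta$ on the neck $\Sigma\times[-\delta,\delta]$ that matches the $g_0$-second-fundamental-form at the endpoints, and then set $g_\alpha^\delta = dt^2 + h_\alpha^\delta$ on the neck with $h_\alpha^\delta$ given by the required integral formula. As a preliminary step I apply \lemref{RealignCoordsLem} to each $g_j$, replacing it by $F_j^\ast g_j$, so that the inward unit normal to $\partial M$ becomes the same vector in $T_pM$ for $g_0$ and for every $g_j$; the inequality $g_0\le g_j$ is preserved under the pullback. Flowing along the common inward normal gives a shared collar chart $\Sigma\times[0,\varepsilon)\hookrightarrow M$ in which $g_\alpha = ds^2 + h_\alpha(z,s)$ with $h_\alpha(z,0)=g_\alpha|_{T_z\Sigma}$. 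I fix $\hat\delta<\varepsilon/2$ and glue the neck to the two copies of $M$ via the identifications $s=t+\delta$ on the first copy and $s=\delta-t$ on the second.

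Next I construct $A_0^\delta$. In the Fermi chart for $g_0$ one has $\partial_s h_0(z,s)=2A_0(z,s)$, where $A_0(z,s)$ is the second fundamental form of $\Sigma\times\{s\}\subset(M,g_0)$. Choose a smooth $\beta:\R\to[0,1]$ with $\beta(s)=1$ for $s\le-2/3$, $\beta(s)=0$ for $s\ge 2/3$, and $\beta(-s)=1-\beta(s)$, and define
\bee
A_0^\delta(z,t)\;:=\;\beta(t/\delta)\,A_0(z,t+\delta)\;-\;\bigl(1-\beta(t/\delta)\bigr)\,A_0(z,\delta-t).
\eee
A direct computation shows $A_0^\delta(z,-t)=-A_0^\delta(z,t)$, $A_0^\delta(z,t)=A_0(z,t+\delta)$ for $t\in[-\delta,-2\delta/3]$, and $A_0^\delta(z,t)=-A_0(z,\delta-t)$ for $t\in[2\delta/3,\delta]$. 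Taking $C:=\sup_{\Sigma\times[0,\varepsilon)}|A_0|_h$ gives the uniform bound $|A_0^\delta|_h\le C$, and smoothness is built into the definition.

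Now define $h_\alpha^\delta(z,t):=h_\alpha(z)+2\int_{-\delta}^t A_0^\delta(z,s)\,ds$ and $g_\alpha^\delta:=dt^2+h_\alpha^\delta$ on the neck. The integral identity \eqref{FToCalpha} is immediate. Oddness of $A_0^\delta$ in $t$ gives $\int_{-\delta}^{\delta}A_0^\delta\,ds=0$, and a change of variables then yields the symmetry $h_\alpha^\delta(z,-t)=h_\alpha^\delta(z,t)$. Estimate \eqref{UniformMetricDeltaConvergence} follows from $|h_\alpha^\delta(z,t)-h_\alpha^\delta(z,-\delta)|_h\le 2\int_{-\delta}^{t}|A_0^\delta|_h\,ds\le 4C\delta$, the factor $(m-1)$ being absorbed when converting between operator-type and Frobenius-type tensor norms on the $(m-1)$-dimensional $\Sigma$. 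Smoothness of $g_0^\delta$ at the gluing holds because on $[-\delta,-2\delta/3]$ the metric $h_0^\delta(z,t)$ equals the Fermi expansion $h_0(z,t+\delta)$ and on $[2\delta/3,\delta]$ it equals $h_0(z,\delta-t)$, giving infinite-order matching to $g_0$ at $t=\pm\delta$. In contrast, $h_j^\delta(z,-\delta)=g_j|_{T\Sigma}$ agrees only in value at the gluing, while $\partial_t h_j^\delta(z,-\delta)=2A_0(z)$ is generally not the $g_j$-second-fundamental-form, so $g_j^\delta$ is only $C^0$ across the gluing. For the final lower bound, continuity and positive-definiteness of $h_0$ on compact $\Sigma$ give $c>0$ with $h_0\ge 2c\,h$; shrinking $\hat\delta=\hat\delta(C,g_0|_\Sigma)$ so that $4C\hat\delta<c$ yields $h_0^\delta\ge c\,h$ on the neck for all $\delta<\hat\delta$, so we may take $\eta(\hat\delta):=c$, and positive-definiteness of $h_j^\delta$ is automatic from $h_j^\delta-h_0^\delta=h_j-h_0\ge 0$.

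The main technical obstacle is arranging smooth (not merely $C^1$) gluing of $g_0^\delta$ at $\Sigma\times\{\pm\delta\}$. This is handled by the fact that $\beta$ is constant near $\pm 1$: on a one-sided neighborhood of each endpoint $A_0^\delta$ literally coincides with $\pm A_0(z,\cdot)$ in the shifted collar parameter, so infinite-order matching with the Fermi expansion of $g_0$ is automatic and no Borel-extension argument is required.
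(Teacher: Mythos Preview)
Your argument is correct and follows the same strategy as the paper: use \lemref{RealignCoordsLem} to align the inward normals, set $g_\alpha^\delta=dt^2+h_\alpha^\delta$ on the neck with $h_\alpha^\delta$ given by the integral formula, and verify the listed properties. The paper simply asserts the existence of a suitable $A_0^\delta$ (smooth, odd in $t$, matching the $g_0$-second-fundamental-form smoothly at $t=-\delta$, bounded), while you give an explicit cutoff construction; this is a genuine improvement in clarity, and your verification that $h_0^\delta(z,t)=h_0(z,t+\delta)$ on $[-\delta,-2\delta/3]$ makes the smooth gluing of $g_0^\delta$ transparent.

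One small inaccuracy: the claim that in the shared collar chart $g_\alpha=ds^2+h_\alpha(z,s)$ holds for \emph{all} $\alpha$ is not correct for $\alpha=j$. The flow is along the $g_0$-unit normal, so only $g_0$ takes the Fermi product form for $s>0$; after \lemref{RealignCoordsLem} one knows merely that $\partial_s$ is $g_j$-unit and $g_j$-orthogonal to $T\Sigma$ \emph{at} $s=0$. Fortunately you only ever use this at $s=0$ (for the initial value $h_j(z)=g_j|_{T\Sigma}$ and for the $C^0$ matching of $g_j^\delta$ with $g_j$ across the gluing), so the argument is unaffected. The paper handles this the same way, identifying $\partial_t$ at $\Sigma_{-\delta}$ with $-\nu_0$ and noting that only continuity of $g_j^\delta$ results.
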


\begin{proof}
Define $\hat{M}^\delta_\alpha=(\hat{M}^\delta,g_\alpha^{\delta})$ where $g_\alpha^{\delta} = g_\alpha$ on both factors of $M$ inside $\hat{M}^\delta$.
Let $(z,t)$ be coordinates on $\Sigma \times [-\delta,\delta]$ then we define
\begin{align}
g_\alpha^{\delta}(\partial_t,\partial_t)&=1,
\\g_\alpha^{\delta}(\partial_t,\partial_{z_i})&=0 \quad 1 \le i \le m-1.
\end{align}

To define $g_\alpha^{\delta}(\partial_{z_i},\partial_{z_k})$ we proceed as follows. 
Let $\Sigma_t=\Sigma \times \{t\}$ then  we write $g_\alpha^{\delta}|_{\Sigma_t} = h_\alpha^{\delta}(z_1,...,z_{m-1},t)$. 
For $\alpha=0$,  $h_0^{\delta}$  is a solution to the equation
\begin{align}
\frac{\partial h_0^{\delta}}{\partial t}(z,t)  &=2A_0^{\delta}(z,t),\label{MetricEvEq}
\\  h_0^{\delta}(z,-\delta) &= g_0|_{\Sigma}, 
\end{align}
where  $A_0^{\delta}(z,t)$ denotes the second fundamental form of $\Sigma_t$ and is 
chosen to be smooth, agree smoothly with the second fundamental form of $\Sigma \subset M_0$, odd in $t$
\begin{align}
A_0^{\delta}(z,t) = -A_0^{\delta}(z,-t),\label{ASymmetry} 
\end{align}
and uniformly bounded in $\delta$
\begin{align}
|A_0^{\delta}|_{h} \le C,\label{ABounded}
\end{align}
so that $\hat{\delta}$ can be chosen so that $h_0^{\delta}$ is positive definite on $\Sigma \times [-\delta,\delta]$ for $\delta \le \hat{\delta}$.
Notice that the fact that we can extend $A_0^{\delta}$
to be smooth implies that $h_0^{\delta}$ is a smooth metric. 

Now we just need to discuss how to attach the Riemannian neck in a smooth way to $M$. For this let $\nu_0$ be the inward pointing unit normal vector to $\partial M = \Sigma$ with respect to $g_0$.  Then let $F: \Sigma\times [0,\varepsilon) \rightarrow M$ be a one parameter family of smooth maps which solve
\begin{align}
\frac{\partial F}{\partial s} = \nu_0
\end{align}
which we know has a solution for at least a short time $s \in [0,\varepsilon_0)$. Now by the change of coordinates $t=-\delta-s$ we let $\Sigma_{-\delta-s}:= F(\Sigma,s)$ so that $\Sigma \times (-\delta-\varepsilon_0,-\delta]$ fits smoothly into the coordinates defined on $\Sigma \times [-\delta,-\delta]$. See the proof of Theorem 9 in Bray \cite{Bray-Penrose} for a similar discussion of the construction above.

For $\alpha=j \in \mathbb N$,  $h_j^{\delta}$ is the solution to the equation
\begin{align}\label{eq-gdeltaj}
\frac{\partial h_j^{\delta}}{\partial t}(z,t)  &=2A_0^{\delta}(z,t),
\\  h_j^{\delta}(z,-\delta) &= g_j|_{\Sigma}.
\end{align}
To attach the metric to $M_j$ we first note by Lemma \ref{RealignCoordsLem} we may assume that $\nu_0$ is also the inward unit normal vector with respect to $g_j$. Hence we identify $\partial_t$ on $\Sigma_{-\delta}$ with $-\nu_0$, the outward unit normal vector to $\Sigma \subset M$ with respect to $g_j$. Then since the outward normal vectors to the boundary $\Sigma$ agree with respect to $g_{\alpha}$ this ensures that the metric comparison on $M$ and the metric comparison on $\Sigma \times [-\delta,\delta]$ are compatible at the boundary so that we will be able to conclude $g_j^{\delta} \ge g_0^{\delta}$. In this case we note that $g_j^{\delta}$ will not be a smooth metric but instead just continuous due to the incompatibility of the second fundamental form of $\Sigma$ from inside $M$ versus inside the neck $\Sigma \times [-\delta,\delta]$.

For $v,w$ vector fields on $\Sigma$ we observe
\begin{align}
h_\alpha^{\delta}(v,w)(z,t)-h_\alpha^{\delta}(v,w)(z,-\delta) & = \int_{-\delta}^t \frac{\partial}{\partial s} \left(h_{\alpha}^{\delta}(v,w)(z,s) \right) ds,
\end{align}
and since the vector fields do not depend on time we have
\begin{align}\label{halphadelta}
h_\alpha^{\delta}(v,w)(z,t) & = h_\alpha^{\delta}(v,w)(z,-\delta)+2\int_{-\delta}^t A_0^{\delta}(v,w)(z,s) ds.
\end{align}
Since   $h_j^{\delta}(z,-\delta) =  h_j|_{\Sigma_\delta}(z)   \ge  h_0|_{\Sigma_\delta}(z) = h_0^{\delta}(z,-\delta) $
and $h_0^{\delta}(z,t)$ is positive definite on $\Sigma\times [-\delta,\delta]$ then $h_j^{\delta}(z,t)$ is positive definite on $\Sigma\times [-\delta,\delta]$. 

Note that by the oddness of $A_0^{\delta}$ we know
\begin{align}
\int_{-t}^t A_0^{\delta}(v,w)(z,s) ds = 0
\end{align} 
and hence
\begin{align}
h_\alpha^{\delta}(v,w)(z,-t) & = h_\alpha^{\delta}(v,w)(z,-\delta)+2\int_{-\delta}^{-t} A_0^{\delta}(v,w)(z,s) ds
\\&=h_\alpha^{\delta}(v,w)(z,-\delta)+2\int_{-\delta}^{t} A_0^{\delta}(v,w)(z,s) ds
\\&=h_\alpha^{\delta}(v,w)(z,t).
\end{align}
This symmetry is used to attach the second copy of $M$ to the other end of the neck $\Sigma \times [-\delta,\delta]$.

Now we observe that  for  an $h$ orthornormal frame  $\{e_1,...,e_{m-1}\}$ on $\Sigma$ we can combine  \eqref{ABounded} and  \eqref{halphadelta} to obtain the estimate
\begin{align}
h_\alpha^{\delta}(e_i,e_k)(z,-\delta) - 2C(t+\delta) \le & h_\alpha^{\delta}(e_i,e_k)(z,t) \\
\le & h_\alpha^{\delta}(e_i,e_k)(z,-\delta) + 2C(t+\delta). \label{LipBounds}
\end{align}
From the previous inequality we can estimate
\begin{align}
|h_\alpha^{\delta}(z,-\delta) - h_\alpha^{\delta}(z,t)|_{h} &= \sqrt{\sum_{i,k=1}^{m-1} \left(h_\alpha^{\delta}(z,-\delta)(e_i,e_k) - h_\alpha^{\delta}(z,t)(e_i,e_k)\right)^2}
\\&\le \sqrt{4C^2(t+\delta)^2(m-1)^2}
\\&\le 4(m-1)C \delta \quad \forall t \in[-\delta,\delta].
\end{align}

If we choose $\{w_1,...,w_{m-1}\}$ an orthonormal basis with respect to $h$ at $z \in \Sigma$ which diagonalizes $A_0^{\delta}(z,t)$ and express $v \in T_z\Sigma$ as $\displaystyle\sum_{i=1}^{m-1}v_iw_i$ then by applying \eqref{ABound1} we find
\begin{align}
A_0^{\delta}(v,v)(z,t) = \sum_{i=1}^{m-1} v_i^2 A_0^{\delta}(w_i,w_i)(z,t) \ge - C  h(v,v). 
\end{align}
Since this argument can be repeated for every $(z,t) \in \Sigma\times [-\delta,\delta]$ we find
\begin{align}
A_0^{\delta}(v,v)(z,t) \ge -C h(v,v), \quad \forall (z,t) \in \Sigma \times [-\delta,\delta], v \in T_z\Sigma.
\end{align}
So by \eqref{halphadelta} we see that
\begin{align}\label{h0deltaposdef}
 h_0^{\delta}(v,v)(z,t) \ge h_0^{\delta}(v,v)(z,-\delta) - 4\delta C h(v,v) 
\end{align}
and hence by choosing $\hat{\delta}$ small enough so that
\begin{align}
h_0^{\delta}(v,v)(z,-\delta)=h_0(v,v) \ge 8 \hat{\delta} C h(v,v), \quad \forall v \in T\Sigma,
\end{align}
we can choose $\eta = 4\hat{\delta} C$ so that for $\delta < \hat{\delta}$, 
\begin{align}
 h_0^{\delta}(v,v)(z,t) &\ge h_0^{\delta}(v,v)(z,-\delta) - 4\delta C h(v,v)
 \\ &\ge  h_0(v,v) - 4\hat{\delta} C h(v,v)
 \\ &\ge 8 \hat{\delta} C h(v,v)- 4\hat{\delta} C h(v,v) = 4\hat{\delta} C h(v,v) = \eta h(v,v). \label{h0deltaposdef2}
\end{align}
\end{proof}

Using the fact that the metrics $g_\alpha^\delta$  equal $g_\alpha$ inside $M$ and the
estimate they satisfy in the neck regions \eqref{UniformMetricDeltaConvergence}, we prove that 
the difference of the distances $d_\alpha^\delta$ and $d_\alpha$  for points inside $M$ are uniformly bounded in terms of $\delta$. 

\begin{thm}\label{DeltaToZeroThm}
Let $M^m$ be a compact, oriented, and connected manifold with non empty boundary, $(M,g_j)$ a sequence of continuous Riemannian manifolds, $(M,g_0)$ a smooth Riemannian manifold, and $h$ a smooth background Riemannian manifold on $\Sigma:= \partial M$. Assume that
\begin{align}
g_0(v,v) < g_j(v,v), \quad \forall p \in M, v \in T_pM, v \not = 0.
\end{align}
Let $\hat{M}_0^{\delta}=(\hat{M}^\delta,g_0^{\delta})$ and $\hat{M}_j^{\delta}=(\hat{M}^\delta,g_j^{\delta})$ be given as in Theorem \ref{FamilyOfMetrics}.  Then for any  $\alpha \in \mathbb N \cup \{0\}$, $\delta < \hat{\delta}=\hat{\delta}(C,g_0|_{\Sigma})$ as in Theorem \ref{FamilyOfMetrics}, and $p,q \in M \subset \hat{M}^{\delta}$ we find
\begin{align}
|d_{\alpha}(p,q)-d_{\alpha}^{\delta}(p,q)| \le 2\eta^{-1} \sqrt{C \delta}\diam(M_{\alpha}),
\end{align}
where $\eta=\eta(\hat{\delta}) >0$ is as in Theorem \ref{FamilyOfMetrics} so that 
\begin{align}
\eta h(v,v) \le h_0^{\delta}(v,v)(z,t), \quad \forall v \in T_z \Sigma, (z,t) \in \Sigma \times [-\delta,\delta].
\end{align}
\end{thm}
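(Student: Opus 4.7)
The plan is to establish the two-sided estimate by showing one direction is immediate and constructing a length-controlled retraction $\phi: \hat M^\delta \to M$ for the other direction. The easy half $d_\alpha^\delta(p,q) \le d_\alpha(p,q)$ is immediate: any piecewise smooth path in $M$ from $p$ to $q$ sits inside $\hat M^\delta$ as a path in the first copy of $M$, and it has the same length under $g_\alpha^\delta$ and $g_\alpha$ since $g_\alpha^\delta$ restricts to $g_\alpha$ on that copy by construction. In particular this gives $d_\alpha^\delta(p,q) \le \diam(M_\alpha)$, which will be recycled at the end.

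For the reverse inequality I would define $\phi: \hat M^\delta \to M$ to be the identity on the first copy $M_1$, the canonical identification $M_2 \to M$ on the second copy, and $\phi(z,t) := z \in \partial M \subset M$ on the neck $\Sigma \times [-\delta,\delta]$. Continuity across the gluing hypersurfaces $\Sigma \times \{\pm\delta\}$ follows directly from the identifications used to form $\hat M^\delta$. On $M_1$ and $M_2$ the pointwise Lipschitz constant of $\phi$ is $1$ because $\phi$ is a local isometry there. On the neck, a tangent vector $v\partial_z + w\partial_t$ satisfies $d\phi(v\partial_z+w\partial_t)=v \in T_z\Sigma \subset T_zM$, so the ratio of squared norms is
\[
\frac{h_\alpha(v,v)}{w^{2}+h_\alpha^\delta(v,v)(z,t)} \le \frac{h_\alpha^\delta(v,v)(z,-\delta)}{h_\alpha^\delta(v,v)(z,t)}.
\]
Using the uniform estimate $|h_\alpha^\delta(z,-\delta) - h_\alpha^\delta(z,t)|_h \le 4(m-1)C\delta$ from Theorem~\ref{FamilyOfMetrics} together with $h_\alpha^\delta \ge h_0^\delta \ge \eta h$ (the first inequality coming from $g_0 \le g_\alpha$ and the fact that $h_\alpha^\delta$ and $h_0^\delta$ satisfy the same evolution equation with comparable initial data), this ratio is at most $1 + O(\delta/\eta)$, so after taking square roots the pointwise Lipschitz constant of $\phi$ is bounded by $1 + 2\eta^{-1}\sqrt{C\delta}$ for all sufficiently small $\delta$.

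To conclude, I would apply this Lipschitz control to a near-minimizer: given $\epsilon>0$, choose a piecewise $C^1$ path $\gamma$ in $\hat M^\delta$ from $p$ to $q$ with $L_{g_\alpha^\delta}(\gamma) \le d_\alpha^\delta(p,q)+\epsilon$. Then $\phi\circ\gamma$ is a piecewise $C^1$ path in $M$ joining $p$ to $q$, and the Lipschitz bound gives
\[
d_\alpha(p,q) \le L_{g_\alpha}(\phi\circ\gamma) \le \bigl(1+2\eta^{-1}\sqrt{C\delta}\bigr)\bigl(d_\alpha^\delta(p,q)+\epsilon\bigr).
\]
Letting $\epsilon\to 0$ and invoking $d_\alpha^\delta(p,q)\le d_\alpha(p,q)\le \diam(M_\alpha)$ yields the claimed bound. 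The main obstacle I expect is carrying the Lipschitz analysis of $\phi$ cleanly across the gluing hypersurfaces $\Sigma \times \{\pm\delta\}$ — where $g_j^\delta$ is only continuous, not $C^1$ — and verifying that lengths add up correctly when a single path $\gamma$ repeatedly crosses between $M_1$, the neck, and $M_2$; once the pointwise Lipschitz estimate is established on each of the three open pieces, passing to lengths of piecewise $C^1$ curves requires only a routine integration over the decomposition of the domain of $\gamma$.
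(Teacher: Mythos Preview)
Your approach is correct and takes a genuinely different, more conceptual route than the paper. The paper argues by taking an almost-minimizer $\gamma_\alpha^\delta$ in $\hat M^\delta$, invoking the reflection symmetry of $g_\alpha^\delta$ across $\Sigma\times\{0\}$ to keep the curve out of the second copy of $M$, splitting it into an $M$-segment/neck-segment/$M$-segment, and then comparing the neck segment to its projection onto $\Sigma_{-\delta}$ via the elementary inequality $\sqrt{x^2-y^2}\ge x-y$. Your global retraction $\phi$ packages the same comparison as a single Lipschitz estimate and applies it uniformly to any curve; this sidesteps the symmetry argument and the implicit assumption that the almost-minimizer crosses into the neck only once. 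In fact your ratio bound on the neck gives a squared Lipschitz constant of order $1+4C\delta/\eta$, hence a Lipschitz constant $\le 1+2C\delta/\eta$, which is \emph{linear} in $\delta$ and therefore stronger than the $\sqrt{\delta}$ bound in the statement; the paper's use of $\sqrt{x^2-y^2}\ge x-y$ is exactly what produces the square root there.

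Two small points to tighten. First, your jump from ``ratio $\le 1+O(\delta/\eta)$'' to ``Lipschitz constant $\le 1+2\eta^{-1}\sqrt{C\delta}$'' is imprecise as written: what you actually get is $\le 1+2C\delta/\eta$, and passing to $1+2\eta^{-1}\sqrt{C\delta}$ requires $C\delta\le 1$ (or $\eta\le 1$). Since downstream one only needs $\delta\to 0$, this is harmless --- just replace $\hat\delta$ by $\min(\hat\delta,1/C)$ if necessary. Second, when you assert $h_\alpha^\delta \ge h_0^\delta$ on the neck, it is worth saying explicitly that this follows from the integral representation \eqref{FToCalpha}: both metrics satisfy the same first-order ODE in $t$ with the same forcing $2A_0^\delta$, so their difference is constant in $t$ and equals $h_\alpha-h_0\ge 0$ at $t=-\delta$.
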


\begin{proof}
Consider points $p,q \in M \subset \hat{M}^{\delta}$ and assume that $M$ is the part of the $\delta$-doubling $\hat M^\delta$ closer to $\Sigma_{-\delta}$.    Let $\gamma_\alpha^{\delta}(s)\subset \hat{M}^{\delta}$ be a curve which almost minimizes the distance between $p,q$ with respect to $g_\alpha^{\delta}$, i.e.
\begin{align}
d_\alpha^{\delta}(p,q)+\varepsilon &\ge L_{g_\alpha^{\delta}}(\gamma_\alpha^{\delta}). 
\end{align}
If $\gamma_\alpha^{\delta}\subset M \subset \hat{M}^\delta$ then there is nothing to argue. Notice that by the symmetry of $\hat{M}_{\alpha}^{\delta}$ it is not efficient for $\gamma_{\alpha}^{\delta}$ to enter the second copy of $M \subset \hat{M}^{\delta}$, so in this case we can decompose the curve into three pieces $\gamma_\alpha^{\delta}= \gamma_\alpha^{\delta,1}\gamma_\alpha^{\delta,2}\gamma_\alpha^{\delta,3}$ where $\gamma_\alpha^{\delta,1},\gamma_\alpha^{\delta,3} \subset M$ and $ \gamma_\alpha^{\delta,2} \subset \Sigma \times [-\delta,\delta]$. Also, define $\bar{\gamma}_\alpha^{\delta}= \gamma_\alpha^{\delta,1}\bar{\gamma}_\alpha^{\delta,2}\gamma_\alpha^{\delta,3}$ where $\bar{\gamma}_\alpha^{\delta,2} \subset \Sigma_{-\delta}$ is connecting the endpoints of $\gamma_\alpha^{\delta,1},\gamma_\alpha^{\delta,3}$ so that if $\gamma_{\alpha}^{\delta,2}(s)=(z_{\alpha}^{\delta,2}(s),t_{\alpha}^{\delta,2}(s))$ then we choose $\bar{\gamma}_\alpha^{\delta,2}$ so that
\begin{align}
L_{g_{\alpha}}(\bar{\gamma}_\alpha^{\delta,2})&\le L_{g_{\alpha}|_{\Sigma}}(z_{\alpha}^{\delta,2}).\label{lengthBoundaryInequality}
\end{align}

By assumption we note that
\begin{align}
d_{\alpha}^{\delta}(p,q)+\varepsilon &\ge L_{g_\alpha^{\delta}}(\gamma_\alpha^{\delta}),
\\d_\alpha(p,q) &\le L_{g_\alpha}(\bar{\gamma}_\alpha^{\delta}).
\end{align}
In addition, by construction
\begin{align}
 L_{g_\alpha^{\delta}}(\gamma_\alpha^{\delta}) &= L_{g_\alpha^{\delta}}(\gamma_\alpha^{\delta,1})+L_{g_\alpha^{\delta}}(\gamma_\alpha^{\delta,2})+L_{g_\alpha^{\delta}}(\gamma_\alpha^{\delta,3})
 \\&=L_{g_\alpha}(\gamma_\alpha^{\delta,1})+L_{g_\alpha^{\delta}}(\gamma_\alpha^{\delta,2})+L_{g_\alpha}(\gamma_\alpha^{\delta,3}),
\end{align}
and hence we are left to estimate $L_{g_\alpha^{\delta}}(\gamma_\alpha^{\delta,2})$.

Notice that on $\Sigma \times [-\delta,\delta]$ we can rewrite the metrics as
\begin{align}
g_\alpha^{\delta} = dt^2+h_\alpha^{\delta},
\end{align}
and we let $h=g|_{\Sigma}$, where $g$ is a background metric and $h_{\alpha}=g_{\alpha}|_{\Sigma} = h_{\alpha}^{\delta}(z,-\delta)$, 
where the last equality follows from the definition of $g^\delta_\alpha$. 

By combining \eqref{ABound1} with \eqref{FToCalpha} we find 
\begin{align}
 h_\alpha^{\delta}(v,v)(z,t) \ge h_\alpha^{\delta}(v,v)(z,-\delta) - 4C\delta h(v,v), \quad \forall (z,t) \in \Sigma \times [-\delta,\delta], v \in T_z\Sigma,
\end{align}
and hence for any $v \in T(\Sigma \times [-\delta,\delta])$
\begin{align}
dt(v)^2&+h_\alpha(dz(v),dz(v))-4C\delta h(dz(v),dz(v)) \le g_\alpha^{\delta}(dz(v),dz(v)),\label{MetricInequality}
\end{align}
where $dz$ projects onto $T\Sigma$.   Note that  for $\alpha=0$ the metric on the left side of \eqref{MetricInequality} is positive definite  by \eqref{h0deltaposdef}-\eqref{h0deltaposdef2}.    Since $h_0 \le h_{\alpha}$, 
it follows that  the metric on the left side of \eqref{MetricInequality} is positive definite for all $\alpha$ and  any $\delta < \hat{\delta}$.
Hence for $\gamma_{\alpha}^{\delta,2}(s)=(z_{\alpha}^{\delta,2}(s),t_{\alpha}^{\delta,2}(s))$ this implies
\begin{align}
 &L_{g_\alpha^{\delta}}(\gamma_\alpha^{\delta,2}) \nonumber
 \\= &  \int_0^1 \sqrt{g_\alpha^{\delta}((\gamma_\alpha^{\delta,2})',(\gamma_\alpha^{\delta,2})')}dt \nonumber \\
 \ge &  \int_0^1 \sqrt{((t_\alpha^{\delta,2})')^2+h_\alpha((z_\alpha^{\delta,2})',(z_\alpha^{\delta,2})')-4C\delta h((z_\alpha^{\delta,2})',(z_\alpha^{\delta,2})')}dt  \nonumber \\
 \ge & \int_0^1 \sqrt{h_\alpha((z_\alpha^{\delta,2})',(z_\alpha^{\delta,2})')-4C\delta h((z_\alpha^{\delta,2})'),(z_\alpha^{\delta,2})')}dt. \label{eq-h-4c}
\end{align}

Now we want to estimate \eqref{eq-h-4c}.  Remember that by Theorem \ref{FamilyOfMetrics} we chose an $\eta=\eta(\hat{\delta}) > 0$ so that
\begin{align}
0<\eta h(v,v) \le h_0^{\delta}(v,v)(z,t) \quad \forall v \in T_z \Sigma, (z,t) \in \Sigma \times [-\delta,\delta].
\end{align}
Then recall that for $x \ge y$
\begin{align}
\sqrt{x^2-y^2} = \sqrt{(x+y)(x-y)} = \sqrt{x+y}\sqrt{x-y} \ge x-y. 
\end{align}

We now estimate \eqref{eq-h-4c}. This is done by noticing that $(z_\alpha^{\delta,2})'  \in  T \Sigma_{-\delta}$,  using the fact that the left hand side of \eqref{MetricInequality} is positive definite for $\delta < \hat{\delta}$ and that 
by \eqref{halphadelta}  $h_0^{\delta} \le h_j^{\delta}$ on $\Sigma \times [-\delta,\delta]$. 
\begin{align}
  L_{g_\alpha^{\delta}}(\gamma_\alpha^{\delta,2}) &\ge \int_0^1 \sqrt{h_\alpha((z_\alpha^{\delta,2})',(z_\alpha^{\delta,2})')}dt
  \\& \quad-\int_0^1\sqrt{4C\delta h((z_\alpha^{\delta,2})',(z_\alpha^{\delta,2})')}dt
  \\&=  L_{h_{\alpha}}(z_{\alpha}^{\delta,2}) - 2\sqrt{C \delta} \int_0^1\sqrt{ h((z_\alpha^{\delta,2})',(z_\alpha^{\delta,2})')}dt
  \\&\ge L_{g_\alpha}(\bar{\gamma}_\alpha^{\delta,2}) - 2\eta^{-1}\sqrt{C \delta} \int_0^1\sqrt{((t_{\alpha}^{\delta,2})')^2+ h_{\alpha}^{\delta}((z_\alpha^{\delta,2})',(z_\alpha^{\delta,2})')}dt
  \\&= L_{g_\alpha}(\bar{\gamma}_\alpha^{\delta,2}) - 2\eta^{-1}\sqrt{C \delta} L_{g_{\alpha}^{\delta}}(\gamma_{\alpha}^{\delta,2})
  \\&\ge L_{g_\alpha}(\bar{\gamma}_\alpha^{\delta,2}) - 2\eta^{-1}\sqrt{C \delta} L_{g_{\alpha}^{\delta}}(\gamma_{\alpha}^{\delta})
  \\&\ge L_{g_\alpha}(\bar{\gamma}_\alpha^{\delta,2}) - 2\eta^{-1}\sqrt{C \delta} (d_{\alpha}^{\delta}(p,q)+\varepsilon)
  \\&\ge L_{g_\alpha}(\bar{\gamma}_\alpha^{\delta,2}) - 2\eta^{-1}\sqrt{C \delta} (d_{\alpha}(p,q)+\varepsilon)\label{SecondToLastLine}
  \\&\ge L_{g_\alpha}(\bar{\gamma}_\alpha^{\delta,2}) - 2\eta^{-1}\sqrt{C \delta} (\diam(M_{\alpha})+\varepsilon).
\end{align}
In the last part we used the fact that $d_{\alpha}(p,q) \ge d_{\alpha}^{\delta}(p,q)$ for $p,q \in M$ in \eqref{SecondToLastLine}.
Then we note
\begin{align}
d_\alpha(p,q) - d_\alpha^{\delta}(p,q) - \varepsilon &\le L_{g_\alpha}(\bar{\gamma}_\alpha^{\delta}) - L_{g_\alpha^{\delta}}(\gamma_\alpha^{\delta})
\\& =L_{g_\alpha}(\bar{\gamma}_\alpha^{\delta,2})- L_{g_\alpha^{\delta}}(\gamma_\alpha^{\delta,2})
\\&\le 2\eta^{-1}\sqrt{C \delta} (\diam(M_{\alpha})+\varepsilon).
\end{align}
Since $d_\alpha(p,q) \ge d_\alpha^{\delta}(p,q)$, by construction, this implies
\begin{align}
|d_\alpha(p,q) - d_\alpha^{\delta}(p,q)| \le  2\eta^{-1}\sqrt{C \delta} (\diam(M_{\alpha})+\varepsilon)+\varepsilon,
\end{align}
 and since this is true for all $\varepsilon>0$ we find
 \begin{align}
|d_\alpha(p,q) - d_\alpha^{\delta}(p,q)| \le  2\eta^{-1}\sqrt{C \delta} \diam(M_{\alpha}),
\end{align}
 which is the uniformity needed in $\alpha$.
\end{proof}

By imposing a condition on the boundaries $\partial M_j, \partial M_0$ we can show that 
$\vol(M_j^\delta)  \to  \vol(M_0^\delta)$ and thus we can apply Allen-Sormani's result, Theorem \ref{PointwiseConvergenceAE},
to get almost everywhere convergence up to a subsequence of the distance functions, $d_j^\delta  \to d_0^\delta$.

\begin{thm}\label{jTo0WithDeltaThm}
Let $M^m$ be a compact, oriented, and connected manifold with non empty boundary, $(M,g_j)$ a sequence of continuous Riemannian manifolds,  $\partial M = \Sigma$, and $(M,g_0)$, $(\Sigma, h)$ smooth background Riemannian manifolds such that 
\begin{align}
    \vol(M_j) &\rightarrow \vol(M_0),
    \\ \|g_j|_{\Sigma}-g_0|_{\Sigma}\|&_{L^{\frac{m-1}{2}}(\partial M,h)}\rightarrow 0,\label{LpBound}
    \\ g_j(v,v) &> g_0(v,v) \quad \quad \forall p \in M, v \in T_pM, v \not = 0.
\end{align}
Let $\hat{M}_0^{\delta}=(\hat{M}^\delta,g_0^{\delta})$ and $\hat{M}_j^{\delta}=(\hat{M}^\delta,g_j^{\delta})$ be given as in Theorem \ref{FamilyOfMetrics}.
 
Then for a subsequence 
\begin{align}
d_{j(k)}^{\delta}(p,q) \rightarrow d_0^{\delta}(p,q)  \,\, \text{as}  \,\, k \to \infty
\end{align}
 for  $\dvol_{g_0^\delta} \times \dvol_{g_0^\delta}$   a.e. $(p,q) \in \hat{M}^\delta \times  \hat{M}^\delta$ and 
\begin{align}
\hat{M}_j^{\delta} &\VFto \hat{M}_0^{\delta}. 
\end{align}
\end{thm}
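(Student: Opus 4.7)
The plan is to recognize that $\hat{M}^\delta$ is a closed manifold (the two copies of $M$ have been glued through the neck $\Sigma\times[-\delta,\delta]$, so there is no remaining boundary), and then verify the hypotheses of Allen--Sormani's Theorem \ref{PointwiseConvergenceAE} for the sequence $\hat M_j^\delta$ with limit $\hat M_0^\delta$. The almost everywhere subconvergence of $d_j^\delta \to d_0^\delta$ is the first conclusion of the theorem, and once we have this together with volume convergence we can invoke Theorem \ref{vol-thm} (applied via Lemma \ref{MetricToDist} so that the identity map is the required biLipschitz distance non-increasing map with smooth inverse) to deduce $\hat M_j^\delta \VFto \hat M_0^\delta$.

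The metric comparison $g_j^\delta \ge g_0^\delta$ on $\hat M^\delta$ is the easy part. On the two copies of $M$ it is just the hypothesis $g_j \ge g_0$. On the neck, the explicit formula \eqref{halphadelta} gives
\begin{equation*}
h_j^\delta(v,v)(z,t) - h_0^\delta(v,v)(z,t) \;=\; g_j|_\Sigma(v,v)(z) - g_0|_\Sigma(v,v)(z) \;\ge\; 0,
\end{equation*}
since the $t$-dependent integral term involving $A_0^\delta$ is identical for $\alpha = j$ and $\alpha=0$. Combined with the fact that $g_\alpha^\delta(\partial_t,\partial_t)=1$ and the cross terms vanish uniformly in $\alpha$, this gives $g_j^\delta(v,v) \ge g_0^\delta(v,v)$ everywhere on $\hat M^\delta$.

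The main obstacle is verifying volume convergence $\vol(\hat M_j^\delta) \to \vol(\hat M_0^\delta)$. Decomposing
\begin{equation*}
\vol(\hat M_\alpha^\delta) \;=\; 2\vol(M_\alpha) + \int_{-\delta}^{\delta}\!\!\int_\Sigma \sqrt{\det h_\alpha^\delta(z,t)}\; dz\, dt,
\end{equation*}
the first summand converges by hypothesis, so everything reduces to showing the neck volumes converge. The key observation is again that $h_j^\delta(z,t) - h_0^\delta(z,t) = g_j|_\Sigma(z) - g_0|_\Sigma(z)$ is independent of $t$, while $h_0^\delta(\cdot,t)$ is uniformly comparable to the background $h$ thanks to the lower bound $h_0^\delta \ge \eta\, h$ from Theorem \ref{FamilyOfMetrics}. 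Writing $\sqrt{\det h_j^\delta} = \sqrt{\det h_0^\delta}\prod_i\sqrt{1+\mu_i^2}$, where $1+\mu_i^2$ are the eigenvalues of $h_j^\delta(\cdot,t)$ with respect to $h_0^\delta(\cdot,t)$, AM--GM yields
\begin{equation*}
\prod_{i=1}^{m-1}\sqrt{1+\mu_i^2} - 1 \;\le\; C_m\,\Bigl(\sum_i \mu_i + \sum_i \mu_i^{m-1}\Bigr),
\end{equation*}
so the integrand is controlled by a linear combination of $|g_j|_\Sigma - g_0|_\Sigma|_h$ and $|g_j|_\Sigma - g_0|_\Sigma|_h^{(m-1)/2}$. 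The Hölder exponent $(m-1)/2$ is exactly what the hypothesis \eqref{LpBound} provides; integrating over $t \in [-\delta,\delta]$ introduces only a factor of $2\delta$, and applying Hölder's inequality with the uniform volume bound on $(\Sigma,h)$ absorbs the small-exponent term. This shows neck volumes for $j$ converge to those for $0$ and completes the volume convergence verification.

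With $g_j^\delta \ge g_0^\delta$, smoothness of $g_0^\delta$, continuity of $g_j^\delta$, and volume convergence on the closed manifold $\hat M^\delta$ all established, Theorem \ref{PointwiseConvergenceAE} yields a subsequence along which $d_{j(k)}^\delta(p,q) \to d_0^\delta(p,q)$ for $dvol_{g_0^\delta}\times dvol_{g_0^\delta}$-a.e.\ $(p,q)$. Applying Theorem \ref{vol-thm} to the closed manifold case (or, equivalently, Theorem \ref{thm-IFconvC0B} with empty boundary) then delivers the volume-preserving intrinsic flat convergence $\hat M_j^\delta \VFto \hat M_0^\delta$.
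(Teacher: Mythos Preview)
Your proof follows essentially the same strategy as the paper's: establish $g_j^\delta \ge g_0^\delta$ via the identical integral term in \eqref{FToCalpha}, prove $\vol(\hat M_j^\delta)\to\vol(\hat M_0^\delta)$ by controlling the neck contribution, then invoke Theorem~\ref{PointwiseConvergenceAE} and Theorem~\ref{vol-thm}. The only substantive difference is in the neck-volume step: the paper observes that $|h_j^\delta(\cdot,t)-h_0^\delta(\cdot,t)|_h$ is $t$-independent and then cites Lemma~2.7 and Lemma~4.3 of \cite{Allen-Sormani-2} (which say that $L^{(m-1)/2}$ convergence plus a lower metric bound implies area convergence) followed by dominated convergence in $t$, whereas you inline that lemma via the eigenvalue/determinant computation. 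Your AM--GM bound is correct, though note that $\mu_i^2$ (not $\mu_i$) is comparable to $|g_j|_\Sigma-g_0|_\Sigma|_h$, so the low-order term is really $|g_j|_\Sigma-g_0|_\Sigma|_h^{1/2}$ rather than the first power; your H\"older step handles this just as well.

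One point you skip, exactly as the paper does, is the uniform diameter bound $\diam(\hat M_j^\delta)\le D$ needed to invoke Theorem~\ref{vol-thm} for the $\mathcal{VF}$ conclusion; the statement of Theorem~\ref{jTo0WithDeltaThm} carries no diameter hypothesis, and the paper merely asserts such a bound ``can be calculated.'' In practice this causes no trouble, since the only downstream use of this theorem (in Theorem~\ref{thm-distdj}) assumes $\diam(M_j)\le D$, from which a bound on $\diam(\hat M_j^\delta)$ follows easily via the neck estimate \eqref{UniformMetricDeltaConvergence}.
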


\begin{proof}
By using $A_0^{\delta}$ to define all the metrics $h_\alpha^\delta$  on $\Sigma \times [-\delta,\delta]$ and the assumption that $g_0 \le g_j$,  we find for a vector field $v$ on $\Sigma$
\begin{align}
h_j^{\delta}(v,v)(z,t) & = h_j^{\delta}(v,v)(z,-\delta)+2\int_{-\delta}^t A_0^{\delta}(v,v)(z,s) ds
\\&\ge h_0^{\delta}(v,v)(z,-\delta)+2\int_{-\delta}^t A_0^{\delta}(v,v)(z,s) ds 
\\&= h_0^{\delta}(v,v)(z,t).
\end{align}
Then since any $w \in T_pM$, $p=(z,t) \in \Sigma \times [-\delta,\delta]$, can be written as $(v,c \partial_t)$, $v \in T_z \Sigma$ and $g_{\alpha}^{\delta}(v,\partial_t)=0$, we find
\begin{align}
g_0^{\delta}(w,w) &= c^2g_0^{\delta}(\partial_t,\partial_t) + g_0^{\delta}(v,v)\label{MetricLowerBoundDelta1} 
\\&= c^2 + h_0^{\delta}(v,v)  
\\&\le c^2 + h_j^{\delta}(v,v) 
\\ &= c^2g_j^{\delta}(\partial_t,\partial_t) + g_j^{\delta}(v,v) = g_j^{\delta}(w,w),\label{MetricLowerBoundDelta2}
\end{align}
and hence
\begin{align}
g_0^{\delta} \le g_j^{\delta}.\label{MetricLowerBoundDelta}
\end{align}

For the volume estimate we use \eqref{FToCalpha} and assume that $\{e_1,...,e_{m-1}\}$ is an orthonormal frame for $h$ on $\Sigma$ to estimate
\begin{align}
|h_j^{\delta}(z,t)  -h_0^{\delta}(z,t) |_h = &  \sqrt{\sum_{k,l=1}^{m-1} \left(h_j^{\delta}(z,t)(e_k,e_l) - h_0^{\delta}(z,t)(e_k,e_l)\right)^2}\\
= & \sqrt{\sum_{k,l=1}^{m-1} \left(h_j^{\delta}(z,-\delta)(e_k,e_l) - h_0^{\delta}(z,-\delta)(e_k,e_l)\right)^2}\\
= &  |h_j^{\delta}(z,-\delta)-h_0^{\delta}(z,-\delta) |_h
\end{align}
which by combining with the assumption \eqref{LpBound} implies 
\begin{align}
&\int_{\Sigma}|h_j^{\delta}(z,t)-h_0^{\delta}(z,t)|_h^{\frac{m-1}{2}}dA_h\label{LpEstimate} 
\\&= \int_{\Sigma}|h_j^{\delta}(z,-\delta)-h_0^{\delta}(z,-\delta) |_h^{\frac{m-1}{2}}dA_h \rightarrow 0.
\end{align}
It was observed in a previous paper of the first named author and Sormani that $L^{\frac{m-1}{2}}$ convergence of a metric combined with a metric lower bound implies convergence of areas (See Lemma 2.7 and Lemma 4.3 of \cite{Allen-Sormani-2}) and hence we find
\begin{align}
\area(\Sigma,h_j^{\delta}(\cdot,t)) \rightarrow \area(\Sigma,h_0^{\delta}(\cdot,t)), \quad \forall t \in [-\delta,\delta].
\end{align}
This allows us to compute
\begin{align}
&|\vol_j(\Sigma\times[-\delta,\delta])-\vol_0(\Sigma\times[-\delta,\delta])|
\\&= \left|\int_{-\delta}^{\delta} \area(\Sigma,h_j^{\delta}(\cdot,t))dt-\int_{-\delta}^{\delta}\area(\Sigma,h_0^{\delta}(\cdot,t))dt\right|
\\&\le \int_{-\delta}^{\delta} \left|\area(\Sigma,h_j^{\delta}(\cdot,t))-\area(\Sigma,h_0^{\delta}(\cdot,t))\right|dt \rightarrow 0,
\end{align}
where we use the dominated convergence theorem in the last line since $\left|\area(\Sigma,h_j^{\delta}(\cdot,t))-\area(\Sigma,h_0^{\delta}(\cdot,t))\right|$ is bounded for all $t \in [-\delta,\delta]$ by \eqref{LpBound} and \eqref{LpEstimate}.

Since we have assumed $\vol(M_j) \rightarrow \vol(M_0)$ and we have shown that the volume of the collar region converges as well we conclude that 
\begin{align}
\vol(\hat{M}_j^{\delta}) \rightarrow \vol(\hat{M}_0^{\delta}).\label{VolumeConvergenceDelta}
\end{align}

Now recall that $(\hat M^\delta, g_0^\delta)$ is a smooth Riemannian manifold, this combined with
\eqref{MetricLowerBoundDelta}, and \eqref{VolumeConvergenceDelta} allow us to apply 
Theorem \ref{PointwiseConvergenceAE}   to get a subsequence such that 
\begin{align}
d_{j(k)}^{\delta}(p,q) \rightarrow d_0^{\delta}(p,q)
\end{align}
for  $\dvol_{g_0^\delta}  \times \dvol_{g_0^\delta}$  a.e. $(p,q)  \in \hat M^\delta  \times \hat M^\delta$.    By calculating an upper bound for $\diam(\hat M_j^\delta)$ we can apply 
Theorem \ref{vol-thm} to get 
\begin{align}
\hat{M}_j^{\delta} \VFto \hat{M}_0^{\delta}. 
\end{align}
\end{proof}

Now we combine all the results from this section with a triangle inequality argument to obtain the desired conclusion.

\begin{thm}\label{thm-distdj}
Let $M^m$ be a compact, oriented, connected manifold with non empty boundary, $(M,g_j)$ a sequence of continuous Riemannian manifolds, $(M,g_0)$ a smooth Riemannian manifold, and $h$ a smooth background Riemannian manifold on $\Sigma:=\partial M$ such that 
\begin{align}
    \vol(M_j) &\rightarrow \vol(M_0),
    \\ \|g_j|_{\Sigma}-g_0|_{\Sigma}\|&_{L^{\frac{m-1}{2}}(\Sigma,h)}\rightarrow 0,
    \\ \diam(M_j) &\le D,
    \\ g_j(v,v) &> g_0(v,v) \quad \forall v \in T_pM.
\end{align}
Then for a subsequence we have 
\begin{align}
d_{j(k)}(p,q) \rightarrow d_0(p,q)
\end{align}
for  $\dvol_{g_0} \times \dvol_{g_0}$  a.e. $(p,q) \in M  \times M$.
\end{thm}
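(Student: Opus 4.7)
The plan is to combine the three main theorems of this section by a triangle inequality, using a diagonal subsequence extraction. Fix a sequence $\delta_n \searrow 0$ with each $\delta_n < \hat\delta$, where $\hat\delta$ is as in Theorem~\ref{FamilyOfMetrics}. For each fixed $n$, all the hypotheses of Theorem~\ref{jTo0WithDeltaThm} are satisfied (the strict inequality $g_0<g_j$ can be arranged by passing from $g_j$ to $(1+1/j)g_j$ if needed, which does not affect the conclusion up to a sub-subsequence), so we obtain a subsequence $j_n(k)$ of $\N$ such that
\be
d_{j_n(k)}^{\delta_n}(p,q) \longrightarrow d_0^{\delta_n}(p,q)
\ee
for $\dvol_{g_0^{\delta_n}}\times \dvol_{g_0^{\delta_n}}$ almost every $(p,q)\in \hat M^{\delta_n}\times \hat M^{\delta_n}$. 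Restricting to $M\times M\subset \hat M^{\delta_n}\times \hat M^{\delta_n}$, and using that $g_0^{\delta_n}|_M=g_0$ so that the restricted measure is just $\dvol_{g_0}\times \dvol_{g_0}$, I obtain a.e.\ convergence on $M\times M$ for each fixed $n$.

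Next I perform a Cantor diagonal argument: iteratively refining subsequences so that the $n$-th refinement works for $\delta_1,\dots,\delta_n$, and then taking the diagonal $j(k):=j_k(k)$, I produce a single subsequence such that, for every $n$,
\be
d_{j(k)}^{\delta_n}(p,q) \longrightarrow d_0^{\delta_n}(p,q) \qquad \dvol_{g_0}\times \dvol_{g_0}\text{ a.e.\ }(p,q)\in M\times M.
\ee
Let $E\subset M\times M$ be the intersection of the good sets over all $n$; it has full $\dvol_{g_0}\times\dvol_{g_0}$ measure in $M\times M$.

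For $(p,q)\in E$ and any $n$, the triangle inequality gives
\be
|d_{j(k)}(p,q)-d_0(p,q)| \le |d_{j(k)}(p,q)-d_{j(k)}^{\delta_n}(p,q)| + |d_{j(k)}^{\delta_n}(p,q)-d_0^{\delta_n}(p,q)| + |d_0^{\delta_n}(p,q)-d_0(p,q)|.
\ee
By Theorem~\ref{DeltaToZeroThm}, the first and third terms are each bounded by $2\eta^{-1}\sqrt{C\delta_n}\,D$ (using the uniform diameter bound $\diam(M_j)\le D$ and the finite diameter of $M_0$), where the constants $C$ and $\eta$ are independent of $j$ and $n$. The middle term tends to zero as $k\to\infty$ by construction. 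Thus
\be
\limsup_{k\to\infty}|d_{j(k)}(p,q)-d_0(p,q)| \le 4\eta^{-1}\sqrt{C\delta_n}\,D,
\ee
and sending $n\to\infty$ forces the limit to be zero, completing the proof.

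The main obstacle I anticipate is technical rather than conceptual: ensuring that the diagonal subsequence extraction is legitimate given that each $\delta_n$ corresponds to a different ambient manifold $\hat M^{\delta_n}$. The resolution is that the conclusion we actually need lives on $M\times M$, where the reference measure $\dvol_{g_0}\times\dvol_{g_0}$ does not depend on $n$; the null sets produced by Theorem~\ref{jTo0WithDeltaThm} for different $n$ can therefore simply be unioned. The remaining bookkeeping — matching the strict inequality $g_j>g_0$ required by Theorem~\ref{FamilyOfMetrics} with the nonstrict one in the hypothesis — is a minor point that can be handled by a small perturbation or by replacing $g_j$ with $(1+\tfrac{1}{j})g_j$ before doubling.
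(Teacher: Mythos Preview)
Your proposal is correct and follows essentially the same route as the paper: fix a sequence $\delta_n\searrow 0$, invoke Theorem~\ref{jTo0WithDeltaThm} on each $\hat M^{\delta_n}$, diagonalize to a single subsequence, and then combine Theorem~\ref{DeltaToZeroThm} with the triangle inequality, noting that on $M\times M$ the reference measure is independent of $n$. The only cosmetic differences are your constant $4\eta^{-1}\sqrt{C\delta_n}D$ versus the paper's $2\eta^{-1}\sqrt{C\delta_n}D$ (yours is the more careful bookkeeping of the two diameter terms) and your remark about perturbing to obtain strict inequality, which is unnecessary here since $g_j>g_0$ on nonzero vectors is already part of the hypothesis.
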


\begin{proof}
Let $\delta_i$ be a sequence of real numbers decreasing to zero so that $\delta_i < \hat{\delta}$ where $\hat{\delta}$ is from Theorem \ref{FamilyOfMetrics}. 
Then apply Theorem  \ref{FamilyOfMetrics} to find Riemannian manifolds
$(M_j^{\delta_i}, g_j^{\delta_i})$ and $(M_0^{\delta_i}, g_0^{\delta_i})$. 
Then   for $p,q \in M \subset \hat{M}^{\delta_i}$, by the triangle inequality, we find
\begin{align}
|d_j(p,q)-d_0(p,q)| &\le |d_j(p,q) - d_j^{\delta_i}(p,q)|
\\&\quad+|d_j^{\delta_i}(p,q)-d_0^{\delta_i}(p,q)|+|d_0^{\delta_i}(p,q)-d_0(p,q)|.
\end{align}
If we apply the estimates of Theorem \ref{DeltaToZeroThm} we find  
\begin{align}
|d_j(p,q)-d_0(p,q)| &\le |d_j^{\delta_i}(p,q)-d_0^{\delta_i}(p,q)|+2\eta^{-1}\sqrt{C \delta_i} D.
\end{align}
Now by Theorem \ref{jTo0WithDeltaThm} for each $i$ there is a subsequence such that 
\be
 |d_{j(i,k)}^{\delta_i}(p,q)-d_0^{\delta_i}(p,q)|   \to  0
\ee
for  $\dvol_{g_0^{\delta_i}} \times \dvol_{g_0^{\delta_i}}$   a.e. $(p,q) \in \hat{M}^{\delta_i} \times  \hat{M}^{\delta_i}$.
Notice that we are only considering points $(p,q)  \in M \times M$ and there we have $\dvol_{g_0^{\delta_i}} \times \dvol_{g_0^{\delta_i}}=\dvol_{g_0} \times \dvol_{g_0}$ since $g^\delta_0=  g_0$ in both copies of $M$. 

By a diagonalization process we can assume that $j(k)=j(i,k)=j(i',k)$ for all $i, i'$. 
Thus,  we can take the limit 
\begin{align}
\limsup_{k\rightarrow \infty}|d_{j(k)}(p,q)-d_0(p,q)| &\le 2\eta^{-1}\sqrt{C \delta_i} D,
\end{align}
then we let $i  \to \infty$, 
\begin{align}
\limsup_{k\rightarrow \infty}|d_{j(k)}(p,q)-d_0(p,q)| &= 0.
\end{align}

\end{proof}


\section{Proof of Main Theorems}\label{Proofs}

We are ready to prove our main results. We start by proving the following theorem and then use it to prove the main theorems of the introduction.
 
\begin{thm}\label{PrelimMainThm}
Let $M$ be a compact, oriented, and connected manifold.  Let $M_0=(M,g_0)$  be a smooth Riemannian manifold and $M_j = (M,g_j)$ a sequence of continuous Riemannian manifolds
such that
\be 
g_0(v,v) < g_j(v,v)   \quad \forall v \in T_pM,
\ee
\be
\diam(M_j) \le D, 
\ee
\be 
\vol(M_j) \rightarrow \vol(M_0)
\ee
and
\be
 \|g_j|_{\partial M}-g_0|_{\partial M}\|_{L^{\frac{m-1}{2}}(\partial M,h)}\rightarrow 0.
\ee
Then
\begin{align}
M_j \VFto M_0. 
\end{align}
\end{thm}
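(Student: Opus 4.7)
The plan is to combine the two main ingredients developed in the preceding sections: the almost everywhere distance subconvergence from Theorem \ref{thm-distdj}, and the intrinsic flat convergence criterion from Theorem \ref{thm-IFconvC0B}. The hypotheses of Theorem \ref{PrelimMainThm} are precisely those needed to invoke Theorem \ref{thm-distdj}, so up to passing to a subsequence we obtain $d_{j(k)}(p,q) \to d_0(p,q)$ for $\dvol_{g_0}\times\dvol_{g_0}$-a.e.\ $(p,q)$. The remaining hypothesis of Theorem \ref{thm-IFconvC0B} that is not immediate from our assumptions is the uniform bound $\vol(\partial M_j)\le A$, which I would obtain as my second step.

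To establish the boundary volume bound, I would note that since $g_0\le g_j$ on $M$, the restrictions satisfy $g_0|_{\partial M}\le g_j|_{\partial M}$ on $\partial M$, and by hypothesis these metrics converge in $L^{\frac{m-1}{2}}(\partial M, h)$. By Lemma~2.7 and Lemma~4.3 of \cite{Allen-Sormani-2} (the same $L^p$-plus-lower-bound argument invoked in the proof of Theorem \ref{jTo0WithDeltaThm}), this implies that the boundary areas converge:
\begin{equation*}
\vol(\partial M_j)=\area(\partial M, g_j|_{\partial M}) \longrightarrow \area(\partial M, g_0|_{\partial M}) = \vol(\partial M_0).
\end{equation*}
A convergent sequence is bounded, so we may take $A := \sup_j \vol(\partial M_j)<\infty$.

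With the subsequential almost everywhere distance convergence in hand, and with all other hypotheses of Theorem \ref{thm-IFconvC0B} verified ($g_0\le g_j$, $\diam(M_j)\le D$, $\vol(M_j)\to\vol(M_0)$, and $\vol(\partial M_j)\le A$), Theorem \ref{thm-IFconvC0B} yields $M_{j(k)} \Fto M_0$ along the chosen subsequence. To upgrade this to full flat convergence of the original sequence, I would use the standard metric-space argument: given any subsequence of $\{M_j\}$, the hypotheses of Theorem \ref{thm-distdj} are inherited, so that subsequence admits a further subsequence along which the distances converge almost everywhere, and then Theorem \ref{thm-IFconvC0B} forces flat convergence to $M_0$ along this further subsequence. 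Since every subsequence of $\{M_j\}$ has a further $\mathcal{F}$-convergent subsequence with the same limit, the whole sequence converges: $M_j\Fto M_0$. Combined with the hypothesis $\vol(M_j)\to\vol(M_0)$, this upgrades to volume-preserving intrinsic flat convergence, $M_j\VFto M_0$.

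The main obstacle in this argument is essentially bookkeeping rather than conceptual: one must ensure that the boundary volume bound $A$ holds uniformly in $j$, which I expect follows cleanly from the cited $L^p$-convergence lemma. Beyond that, the work is already done in Theorems \ref{thm-distdj} and \ref{thm-IFconvC0B}, and the passage from subsequential to full convergence is the standard metric-topological step.
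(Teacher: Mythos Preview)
Your proposal is correct and follows essentially the same approach as the paper: invoke Theorem~\ref{thm-distdj} for subsequential a.e.\ distance convergence, extract the uniform boundary-area bound from the $L^{\frac{m-1}{2}}$ convergence, apply Theorem~\ref{thm-IFconvC0B}, and then pass from subsequential to full convergence. Your justification of the boundary-area bound via Lemmas~2.7 and~4.3 of \cite{Allen-Sormani-2} is more explicit than the paper's one-line assertion, and your ``every subsequence has a further convergent subsequence'' phrasing is the standard equivalent of the paper's contradiction argument.
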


\begin{proof}
We first apply Theorem \ref{thm-distdj} to  get a subsequence of $(M_j,g_j)$
that satisfies 
\be
d_{j(k)}(p,q) \to d_0(p,q)  \qquad \dvol_0 \times \dvol_0 \textrm{ a.e. } (p,q). 
\ee
Then by $ \|g_j|_{\partial M}-g_0|_{\partial M}\|_{L^{\frac{m-1}{2}}(\partial M,h)}\rightarrow 0$  we get 
$\vol(\partial M_j) \leq A$.   So now we can apply Theorem (\ref{thm-IFconvC0B}) to conclude that  
\begin{align}
M_{j(k)} \VFto M_0  \,\, \text{as} \,\, k \to \infty. 
\end{align}

To show that the whole sequence converges we proceed by contradiction. 
Assume that $d_{\mathcal F}(M_{j'(k)}, M_0) \geq \vare>0$  for some subsequence. 
By running the argument from the previous paragraph, there exists a subsequence of $M_{j'(k)}$  
that converges in intrinsic flat sense to $M_0$. This contradicts our  hypothesis. 
\end{proof}

\begin{proof}[  Proof of Theorem \ref{vol-thm-boundary}]
Consider $\tilde{g}_j = \frac{1}{1-\frac{1}{2j}} g_j$ and $\tilde{M}_j=(M,\tilde{g}_j)$. 
Then by reindexing the sequence if necessary we may assume that $\left(1 - \tfrac{1}{2j} \right)g_0 < g_j$ and hence $g_0   <  \tilde{g}_j$,
\begin{align}
\diam(\tilde{M}_j) = & \left(1-\tfrac{1}{2j} \right)^{\frac{1}{2}} \diam(M_j) \le D,\\
\vol(\tilde{M}_j) = & \left(1-\tfrac{1}{2j} \right)^{\frac{m}{2}} \vol(M_j) \rightarrow \vol(M_0)\\
\end{align}
and 
\be
\| \tilde g_j|_{\partial M}-g_0|_{\partial M}\|_{L^{\frac{m-1}{2}}(\partial M,h)}   \rightarrow   0.
\ee
Hence $\tilde{M}_j$ satisfies the hypotheses of Theorem \ref{PrelimMainThm} which implies
\begin{align}
\tilde{M}_j \VFto M_0.
\end{align}

On the other hand, by construction we have  $\|g_j-\tilde{g}_j\|_{C^0_{g_0}(M)} \rightarrow 0$ which implies 
\begin{align}
\sup_{p,q\in M}|d_j(p,q)-\tilde{d}_j(p,q)|\rightarrow 0,
\end{align}
and since $\tilde{g}_j \ge g_j$ we can apply Theorem \ref{est-SWIF} with $W_j=M$ to find
\begin{align}
d_{\mathcal{F}}(\tilde{M}_j, M_j) \rightarrow 0.
\end{align}
Hence, by the triangle inequality for the intrinsic flat distance we find
\begin{align}
M_j \VFto M_0.
\end{align}
\end{proof}

\begin{proof}[Proof of Theorem \ref{vol-thm-boundary-improved}]
Consider $\tilde{g}_j = \frac{1}{1-\frac{1}{2j}} g_j$ and $\tilde{M}_j=(M,\tilde{g}_j)$. 
Then by reindexing the sequence if necessary we may assume that $\left(1 - \tfrac{1}{2j} \right)g_0 < g_j$ and hence $g_0   <  \tilde{g}_j$. Now we note that it was observed in Lemma 2.7 and Lemma 4.3 of \cite{Allen-Sormani-2} that $\tilde{g}_j \ge g_0$ combined with
\be
\int_{M}|\tilde{g}_j-g_0|_{g_0}^{\frac{m}{2}} dV_{g_0} \rightarrow 0,
\ee
implies
\begin{align}
\vol(\tilde{M}_j) \rightarrow \vol(M_0).
\end{align}
Now let $\nu_0$ be the inward pointing unit normal vector to $\partial M = \Sigma$ with respect to $g_0$. Then let $F: \Sigma\times [0,t) \rightarrow M$ be a one parameter family of smooth maps which solve
\begin{align}
\frac{\partial F}{\partial t} = \nu_0
\end{align}
which we know has a solution for at least a short time $t \in [0,\varepsilon_0)$. Now let $\Sigma_{t}= F(\Sigma,t)$ and note that by the coarea formula
\begin{align}
\int_{M} |\tilde{g}_j-g_0|_{g_0}^{\frac{m}{2}} dV_{g_0}&\ge \int_0^{\varepsilon_0} \int_{\Sigma_t} |\tilde{g}_j-g_0|_{g_0}^{\frac{m}{2}} dA_{g_0} dt \rightarrow 0.
\end{align}
Hence we can choose a subsequence $\tilde{g}_{j(k)}$ so that for almost ever $t \in [0,\varepsilon_0)$ we find
\begin{align}
\int_{\Sigma_t} |\tilde{g}_{j(k)}-g_0|_{g_0}^{\frac{m}{2}} dA_{g_0} \rightarrow 0.
\end{align}
Now by H\"{o}lder's inequality we find
\begin{align}
\int_{\Sigma_t} |\tilde{g}_{j(k)}-g_0|_{g_0}^{\frac{m-1}{2}} dA_{g_0} \rightarrow 0.
\end{align}
Since we know that on $\Sigma_t$
\begin{align}
|\tilde{g}_{j(k)}-g_0|_{g_0} \ge |\tilde{g}_{j(k)}|_{\Sigma_t}-g_0|_{\Sigma_t}|_{g_0|_{\Sigma_t}},
\end{align}
we also find
\begin{align}
\int_{\Sigma_t} |\tilde{g}_{j(k)}|_{\Sigma_t}-g_0|_{\Sigma_t}|_{g_0|_{\Sigma_t}}^{\frac{m-1}{2}} dA_{g_0} \rightarrow 0.\label{BoundaryConvergence}
\end{align}

Let $t_i \in [0,\varepsilon_0)$ be a sequence of times so that \eqref{BoundaryConvergence} holds and so that $t_i \rightarrow 0$ as $i \rightarrow \infty$. Let $M_t \subset M$ be a maximal subset so that $\partial M_t = \Sigma_t$, $\Sigma \not \subset M_t$, and $\tilde{M}_{\alpha}^t \subset \tilde{M}_{\alpha}$ the Riemannian manifold with the restricted Riemannian metric where $\tilde{M}_0=M_0$ for notational convenience. Then we define $\tilde{d}_{\alpha}^t$ to be the distance function for the Riemannian manifold $\tilde{M}_{\alpha}^t$. Note that for all $p,q \in M_t$
\begin{align}
\tilde{d}_{\alpha}^t(p,q) \ge \tilde{d}_{\alpha}(p,q) \ge d_0(p,q),\label{DistanceInequality}
\end{align}
and also by Theorem \ref{thm-distdj} for each $i$ there exists a $j(i,k)$ so that
\begin{align}
\tilde{d}_{j(i,k)}^{t_i}(p,q) \rightarrow d_0(p,q)
\end{align}
for almost every $p,q \in \tilde{M}_{t_i}$. By a diagonalization process we can assume that $j(k)=j(i,k)=j(i',k)$ for all $i,i'$ and hence by \eqref{DistanceInequality} we find
\begin{align}
\tilde{d}_{j(k)}(p,q) \rightarrow d_0(p,q)
\end{align}
for almost every $p,q \in M$.

By assumption $\vol(\partial \tilde{M}_j) \leq A$ and so now we can apply Theorem (\ref{thm-IFconvC0B}) to conclude that  
\begin{align}
\tilde{M}_{j(k)} \VFto M_0  \,\, \text{as} \,\, k \to \infty. 
\end{align}

To show that the whole sequence converges we proceed by contradiction. 
Thus, assume that $d_{\mathcal F}(\tilde{M}_{j'(k)}, M_0) \geq \vare>0$  for some subsequence. 
We can run the argument of the previous paragraph, hence a subsequence of $\tilde{M}_{j'(k)}$  
converges in intrinsic flat sense to $M_0$. This contradicts our 
hypothesis. We conclude that $M_j \VFto M_0$ as well since $\tilde{g}_j-g_j \rightarrow 0$ as tensors.
\end{proof}


\section{Application to Intrinsic Flat Stability of the PMT for Graphs} \label{HLSapplication}

The Positive Mass Theorem of Schoen-Yau and later Witten~\cite{Schoen-Yau-positive-mass, Witten-positive-mass} states that any complete asymptotically flat manifold of nonnegative scalar curvature has nonnegative ADM mass. 
Furthermore, if the ADM mass is zero, then the manifold must be Euclidean space. 

The intrinsic flat stability of the positive mass theorem was conjectured by Lee and Sormani \cite{LeeSormani1} and has been shown in the rotationally symmetric case \cite{LeeSormani1}, in the graph case \cite{HLS}, and in various other cases.   
In this section we  prove the stability result for graphical hypersurfaces  in $\E^{n+1}$, with empty boundary,  appearing in  \cite{HLS}  by applying our Theorem \ref{convBdry}.   The original proof in \cite{HLS} has some steps at the end that are difficult to follow and may require deep new theorems about integral current spaces to make them completely rigorous.  Here we avoid such complications by studying the sequence of manifolds themselves rather than their limit spaces.

We first recall that the spatial $n$-dimensional Schwarzschild manifold with boundary of ADM mass $m>0$ can be isometrically embedded into $\mathbb{E}^{n+1}$ as the graph of a smooth function $S_m :  \mathbb{E}^n \smallsetminus B((2m)^{1/(n-2)})   \to \R$, with minimal boundary, such that the boundary lies in the plane $\E^n\times \left\{0\right\}$. Explicitly, for $n=3,4$ we have 
\be\label{S_m-eqn}
	S_m(x) =\left\{
	\begin{array}{ll}
	 \sqrt{ 8m (|x|- 2m)}  &\mbox{ for } n =3\\
	 \sqrt{2m} \log \left( \frac{|x|}{\sqrt{2m}} + \sqrt{\frac{|x|^2}{2m} -1} \right) &\mbox{ for } n =4.
	\end{array}\right.
\ee
We now define the class of uniformly asymptotically flat graphical hypersurfaces of $\E^{n+1}$ with uniformly bounded depth and nonnegative
scalar curvature for which stability will be proven.

\begin{defn}\label{def:hypotheses}
For $n\ge3$, $r_0, \gamma, D>0$, and $\alpha<0$, define $\Gr$ to be the space of all smooth complete Riemannian manifolds of nonnegative scalar curvature, $(M^n,g)$, with empty boundary, that admit a smooth Riemannian isometric embedding $\Psi:M\to \E^{n+1}$ such that the image $\Psi(M)$ is the graph of a function $f\in C^\infty(\E^{n}) $:
\be
\Psi(M)=\left\{(x,f(x)): \,\, x\in \E^n \right\}
\ee
and for almost every $h$, the level set 
\be\label{cond6}
f^{-1}(h)\subset \E^n
\textrm{ is strictly mean-convex and outward-minimizing,}
\ee
where strictly mean-convex means that the mean curvature is strictly positive, and outward-minimizing means that any region of $\E^n$ that contains the region enclosed by $f^{-1}(h)$ must have perimeter at least as large as $\mathcal{H}^{n-1}( f^{-1}(h))$.

In addition we require uniform asymptotic flatness conditions:
\be\label{cond3}
 |D f| \le \gamma
\textrm{ for }|x|\ge r_0/2  \textrm{ and }
\lim_{x\to\infty} |D f| =0. 
\ee
If $n\ge 5$, we require that 
$f(x)$ approaches a constant as $x\to\infty$. 
If $n=3$ or $4$, we require that the graph is asymptotically Schwarzschild:
\be\label{cond5}
\exists \Lambda\in \R \textrm{ such that }
 \left|f(x) - (\Lambda+ S_m(|x|)) \right| \le \gamma |x|^\alpha 
 \textrm{ for } |x|\ge r_0.
\ee
Finally we require that the regions
\be
\Omega(r_0)=\Psi^{-1}(\overline{B(r_0)}\times\R)\,\,\,
\textrm{ and } \,\,\, \Sigma(r_0)=\partial\Omega(r_0)
\ee
have bounded depth
 \be\label{cond7}
 \Depth(\Omega(r_0), \Sigma(r_0)) =\sup\left\{d_g(p,\Sigma(r_0)): p\in \Omega(r_0)\right\}
 \le D.
 \ee
\end{defn}

The geometric conditions on the level sets in \eqref{cond6} are needed to apply the estimates in~\cite{Huang-Lee-Graph}.   We recall that under those conditions Huang-Lee 
proved a stability result for graphical hypersurfaces with respect to the Federer-Fleming's flat topology in $\E^{n+1}$ \cite{Huang-Lee-Graph}. 
However,  convergence with respect to the  flat topology does not necessarily imply convergence with respect to the intrinsic flat topology,  see Example 2.8 in \cite{HLS}. The outward minimizing property is also used to estimate volumes needed for the proof of Theorem~\ref{thm-main}.     
Equations \eqref{cond3} and \eqref{cond5} encode the asymptotic flatness condition of the manifolds. They follow from the natural but much stronger requirement that the functions $f$ to be uniformly asymptotically Schwarzschild up to first order.    Condition \eqref{cond7} prevents the possibility of ``arbitrarily deep gravity wells", hence, it  is used to show uniform diameter bounds.

Here we show that the preimages of the intersections
of the graph $\Psi_j(M_j)$ with the cylinder $\overline{B(r)} \times \R$ converge. 

\begin{thm}[Huang-Lee-Sormani Theorem 1.3  \cite{HLS}]\label{thm-main}   
Let $n\ge3$, $r_0, \gamma, D>0$, $\alpha<0$, and $r\ge r_0$.  Let $M_j \in\Gr$ be a sequence of manifolds 
with 
  \be
  m_{ADM}(M_j) \to 0.
  \ee
Then for any $r  \geq r_0$ we have
\be
\Omega_j(r)  \VFto  \overline{B(r)},
\ee
where $\overline{B(r)}  \subset \E^n$ is the ball of radius $r$ around the origin and  $\Omega_j(r)=\Psi_j^{-1}( \overline{B(r)}\times\R)$.
 \end{thm}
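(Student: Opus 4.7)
The plan is to apply Theorem \ref{convBdry} to the restricted manifolds $\Omega_j(r)$ after identifying them with $\overline{B(r)}\subset \E^n$ via the canonical projection. Concretely, for each $j$ the composition of $\Psi_j^{-1}$ with the vertical projection gives a diffeomorphism $\Pi_j:\Omega_j(r)\to \overline{B(r)}$, and the pullback metric is $g_j = g_0 + df_j\otimes df_j$, where $g_0$ is the Euclidean metric on $\overline{B(r)}$. Setting $M_0=(\overline{B(r)},g_0)$ and $M_j=(\overline{B(r)},g_j)$, the metric inequality $g_0(v,v)\le g_j(v,v)$ is automatic from the graph structure, and convexity of the interior of $M_0$ is immediate since it is an open Euclidean ball, so Euclidean straight-line geodesics between interior points remain interior.

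To verify the diameter bound I would combine \eqref{cond3} and \eqref{cond7}. The pointwise gradient bound $|Df_j|\le \gamma$ on $|x|\ge r_0/2$ yields $g_j\le (1+\gamma^2)g_0$ on $\overline{B(r)}\setminus B(r_0/2)$, so the $g_j$-diameter of this annular region, and in particular of $\Sigma_j(r_0)$, is controlled by $r,r_0,\gamma$ alone. Every point of $\Omega_j(r_0)$ lies within $g_j$-distance $D$ of $\Sigma_j(r_0)$ by \eqref{cond7}, so $\diam(M_j)$ is bounded in terms of $r,r_0,\gamma,D$. The same estimate $g_j\le (1+\gamma^2)g_0$ on $\partial B(r)$ gives
\[
\vol(\partial M_j)\le (1+\gamma^2)^{(n-1)/2}\,\mathcal{H}^{n-1}(\partial B(r)),
\]
which is the required uniform area bound on the boundaries.

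The heart of the argument, and the main obstacle, is volume convergence. Since
\[
\vol(M_j)-\vol(M_0)=\int_{B(r)}\bigl(\sqrt{1+|Df_j|^2}-1\bigr)\,dx\le \tfrac12\int_{B(r)}|Df_j|^2\,dx,
\]
what is needed is $\int_{B(r)}|Df_j|^2\,dx\to 0$. This is precisely the content of the Huang--Lee estimates in \cite{Huang-Lee-Graph}, whose applicability is guaranteed by the strictly mean-convex and outward-minimizing level-set hypothesis \eqref{cond6}: under that hypothesis the bulk gradient integral on each $\overline{B(r)}$ is controlled by a constant multiple of $m_{ADM}(M_j)$, so $m_{ADM}(M_j)\to 0$ forces $\vol(M_j)\to \vol(M_0)$. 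With all four hypotheses of Theorem \ref{convBdry} established, it delivers $\Omega_j(r)\VFto \overline{B(r)}$ directly, bypassing the delicate limit-space analysis required by the approach of \cite{HLS}.
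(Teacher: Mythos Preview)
Your proposal is correct and follows essentially the same route as the paper: identify $\Omega_j(r)$ with $(\overline{B(r)},\,g_0+df_j\otimes df_j)$, verify the metric inequality, diameter bound, boundary-area bound, interior convexity, and volume convergence, and then invoke Theorem~\ref{convBdry}. The only cosmetic difference is bookkeeping: the paper imports the diameter, area, and volume estimates directly from \cite{HLS} (Theorem~3.1 and Corollary~4.4 there), whereas you sketch the diameter and area bounds yourself and attribute the gradient/volume control to \cite{Huang-Lee-Graph}; be aware that the precise volume statement you need is recorded in \cite{HLS} rather than in \cite{Huang-Lee-Graph}.
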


We prove the theorem by applying Theorem \ref{convBdry} and recalling the uniform diameter and area 
bounds and volume convergence proven in \cite{HLS}.

\begin{proof}
Let $g_0$ be the Riemmanian metric of Euclidean space and define
$\tilde g_j= g_0 +   df_j \otimes df_j$.   
Then it follows that $g_0  \leq  \tilde g_j$.
Note that each $M_j=(M,g_j)$ is Riemannian isometric to 
$ (\R^n,  \tilde g_j)$.

In Theorem 3.1 in \cite{HLS} it is shown that 
\be
\diam(\Omega_j(r)) \leq 2D + \pi r \sqrt{1+\gamma^2}
\ee
and 
\be 
\vol(\partial \Omega_j(r)) \leq  \omega_n r^{n-1}\sqrt{1+\gamma^2}.
\ee
Furthermore, in Corollary 4.4  in \cite{HLS}  it is shown that 
\be\label{limsupVolume}
\limsup_{j \to \infty}  \vol(\Omega_j(r)) = \vol(B(r)).
\ee
Since $g_0 \leq \tilde g_j$, we know $\vol(B(r))  \leq  \vol(\Omega_j(r))$ and hence by combining with \eqref{limsupVolume} we find 
\be
\lim_{j \to \infty}  \vol(\Omega_j(r)) = \vol(B(r)). 
\ee
Hence, since $M_j$ is Riemannian isometric to $(\R^n,\tilde{g}_j)$ we can apply Theorem \ref{convBdry} to conclude that 
\be
\Omega_j(r)  \VFto  \overline{B(r)}. 
\ee
\end{proof}

Applying Theorem~\ref{thm-main}  
one should be able to show pointed convergence as in Theorem 1.4 of \cite{HLS}. The only technical part is to find a convergent  sequence of points $p_j$. For that,  in \cite{HLS}  the sequence is chosen 
by arguing that $\Sigma_j(r_0)$ converges in Gromov-Hausdorff sense to $\partial B(r_0)$. 
Recall that if a sequence of spaces  $X_j$  converges in Gromov-Hausdorff sense to $X_\infty$ 
then any sequence $p_j \in X_j$ has a convergent subsequence to a point $p_\infty \in X_\infty$. 

For technical reasons, it is not possible to proceed in that way.   This is due to the fact that
in order to define a converging sequence of points  $p_j \in X_j$ one has to isometrically embed 
each $X_j$ in a bigger space $Z$ where the corresponding images converge in Hausdorff sense to 
the image of $X_\infty$.   But the spaces $X_j$ might not converge in the flat sense in  $Z$. Thus, sequences converging in 
$Z$ do not provide immediate information of sequences of points converging when one also has intrinsic flat convergence.  In a recent paper by the second named author with collaborators  details were given how to choose the sequence of points obtaining pointed convergence  as in Theorem 1.4 of \cite{HLS} and had extended our application to include manifolds with boundary \cite{HLP}.

\section{Appendix}\label{sec-app}

\smallskip 

In this appendix we give a detailed formal proof of Theorem \ref{est-SWIF}. For the reader unfamiliar with the technical details of integral current spaces the proof given in section \ref{VADBrev} is recommended.

\begin{proof}[2nd proof of Theorem \ref{est-SWIF}] 
Let 
\be
\psi_i:  U_i  \subset \R^m  \to \psi_i(U_i)\subset M
\ee  
be an oriented atlas of smooth charts of $M$.  Since these charts are diffeomorphisms, we can consider that they 
are biLipschitz maps when seen as maps from $(\R^m, d_{\R^m})$  to $(M, d_{g_j})$. 
They can also be restricted to $A_{ik} \subset U_i$   to ensure they have pairwise disjoint images as required
when considering them as rectifiable charts for $M_j$.  So 
\begin{align}\label{eq-canonicalT}
[[M_j]] = & \sum_{i,k}   \psi_{i\sharp}   [[1_{A_{ik}} ]]. 
\end{align}
Since $F_j: M_j\to M_0$ is biLipschitz,  then the functions 
\be
F_j\circ \psi_i: A_{ik}  \subset \R^m  \to F_j(\psi_i(A_{ik}))\subset M_0
\ee  
can be used as rectifiable charts for  $M_0$, 
\begin{equation}
[[M_0]]=  {F_j}_\sharp[[M_j]] =   \sum_{i,k} (F_j   \circ  \psi_i)_\sharp  [[1_{A_{ik}} ]].  
\end{equation} 
Let $\iota: [0,h_j]  \to [0,h_j]$ be the identity map. Then, 
\be
(\psi_i, \iota) :  A_{ik}\times [0,h_j]  \to    \psi_i(A_{ik}) \times [0,h_j]   \subset   M_j  \times [0,h_j]
\ee  
defines an oriented atlas of biLipschitz maps. Thus, we can write $[[ \,  M_j \times [0,h_j]  \, ]]$ as a countable sum of integrals as above using this atlas.

Let $\iota_j: M_j \times [0,h_j]   \to   Z$ and $\beta: M_j \times \{0\}   \to  M_j \times [ 0, h_j] $
be inclusion map which are $1$-Lipschitz, see  (\ref{region-dist-dec-to-Z}) for $\iota_j$.
Then   by the definition of $\varphi_0$
\begin{eqnarray}
\varphi_{0\#}[[ M_0]]    =    {\iota_j}_\sharp \beta_\sharp   [[M_j \times \{0\}]]   \in   \intcurr_m(Z).
\end{eqnarray}

\smallskip

Let   $\alpha: M_j \times \{h_j\}   \to  M_j \times [ 0, h_j]$  and $\tilde \alpha: M_j \setminus W_j  \to Z$ 
be  inclusion maps which are trivially Lipschitz,  $[[W_j  \times \{h_j \}  ]]  \in  \intcurr_m(M_j \times[0,h_j])$
 is the current obtained by restricting the atlas of $M_j \times \{h_j\}$ and in 
a similar way we get $[[M_j \setminus W_j]]   \in   \intcurr_m(M_j)$.
Then by the definition of  $\varphi_j$, 
\begin{eqnarray}
\varphi_{j\#}[[ M_j]]   =   {\iota_j}_\sharp    \alpha_\sharp [[W_j  \times \{h_j \}  ]]  +     \tilde \alpha_\sharp [[M_j \setminus W_j]].
\end{eqnarray}

\smallskip
Recall that  the inclusion map $\iota_j: M_j \times [0,h_j]   \to   Z$ is $1$-Lipschitz, hence the maps 
\be
\iota_j \circ    (\psi_i, \iota) :   A_{ik}\times [0,h_j]      \to   \iota_j  (  \psi_i(A_{ik}) \times [0,h_j]  )   \subset   \iota_j(M_j \times [0,h_j])
\ee
define an oriented atlas of Lipschitz maps  for $\iota_j(M_j \times [0,h_j])$,   where the maps can be considered to be biLipschitz as before. 
Then  we define $T$  as  the current with weight $1$ given by this oriented atlas
\be
T=  \sum  ( \iota_j \circ (\psi_i, \iota))_\sharp [[ 1_{A_{ik}\times [0,h_j]  } ]]= {\iota_j}_\sharp [[  \,M_j \times [0,h_j] \,]]
\ee
and
\be\label{Tprime}
T'=  \tilde \alpha_\sharp [[M_j \setminus W_j]]  -     {\iota_j}_\sharp    \alpha_\sharp [[   (M_j \setminus W_j  ) \times \{h_j \} ]] - [ [\, \partial M_j \times [0,h_j] \,]].
\ee
They are integral currents since their boundaries are the following currents  
\begin{align}
\partial T= & {\iota_j}_\sharp \partial[[ \, M_j \times [0,h_j] \,]]  \\
= &   {\iota_j}_\sharp    \alpha_\sharp [[ M_j  \times \{h_j \}  ]]  -   {\iota_j}_\sharp \beta_\sharp   [[M_j \times \{0\}]]  +   [ [\, \partial M_j \times [0,h_j] \,]], \,     
\end{align}
\begin{align}
\partial  T'  =  & \partial \left(        \tilde \alpha_\sharp [[M_j \setminus W_j]]  -  {\iota_j}_\sharp    \alpha_\sharp [[   (M_j \setminus W_j  ) \times \{h_j \} ]]        \right) -  \partial [ [\, \partial M_j \times [0,h_j] \,]]    \nonumber \\
 = & 0  -  \partial [ [\, \partial M_j \times [0,h_j] \,]]. 
\end{align}

Noticing that 
\begin{align}
T' = &  \varphi_{j\#}[[M_j]] -  {\iota_j}_\sharp    \alpha_\sharp [[ M_j \times \{h_j \} ]]  - [ [\, \partial M_j \times [0,h_j] \,]]  
\end{align}
and combining the equations above, we conclude that 
\be
\varphi_{j\#}[[M_j]]- \varphi_{0\#}[[ M_0]] =  T' + \partial  T. 
\ee
Thus, 
\be
d_{\mathcal{F}}(M_j, M_0) \le \mass(T') + \mass(T).
\ee

Now  since  $T=  {\iota_j}_\sharp [[ \,  M_j \times [0,h_j]   \,  ]]$ and $\iota_j:  M_j \times [0,h_j]  \to Z_j'$ is a 1-Lipschitz map, by 
(\ref{eq-pushMeasure}) we get
\begin{align}
\mass(T) \leq  & \mass([[\, M_j \times [0,h_j] \,]])  = \vol_j(M_j \times [0,h_j]) \leq h_jV. 
\end{align}
In a similar way,  from (\ref{Tprime}) and (\ref{eq-pushMeasure})  we get
\be
\mass(T')  \leq 2 \vol(M_j \setminus W_j) +     \vol( \partial M_j \times [0,h_j])  \leq 2 V_j +   Ah_j.
\ee
\end{proof}


\bibliographystyle{alpha}
\bibliography{allenBdry}

\end{document}